\newcommand{\CC}{\mathbb{C}}
\newcommand{\NN}{\mathbb{N}}
\newcommand{\ZZ}{\mathbb{Z}}
\newcommand{\ot}{\otimes}
\begin{document}

\author{Noah Arbesfeld, David Jordan}
\date{}
\title[Lower central series quotients of a free associative algebra]{New results on the lower central series quotients of a free associative algebra}
\maketitle

\begin{abstract}
We continue the study of the lower central series and its associated graded components for a free associative algebra with $n$ generators, as initiated in \cite{FS}.  We establish a linear bound on the degree of tensor field modules appearing in the Jordan-H\"older series of each graded component, which is conjecturally tight.  We also bound the leading coefficient of the Hilbert polynomial of each graded component.  As applications, we confirm conjectures of P. Etingof and B. Shoikhet concerning the structure of the third graded component.
\end{abstract}

\newtheorem{thm}{Theorem}[section]
\newtheorem{defn}[thm]{Definition}
\newtheorem{lem}[thm]{Lemma}
\newtheorem{prop}[thm]{Proposition}
\newtheorem{cor}[thm]{Corollary}
\newtheorem{conj}[thm]{Conjecture}

\theoremstyle{remark}
\newtheorem{rem}[thm]{Remark}
\newtheorem{ex}[thm]{Example}

\section{Introduction and results}

In this paper we consider the free associative algebra\footnote{over $\CC$, or any field of characteristic zero} $A:=A_n$  on generators $x_1,\ldots, x_n$, for $n\geq 2$, and its lower central series filtration: $L_1=A, L_{m+1}=[A,L_m]$.  The corresponding associated graded Lie algebra is $B(A)=\oplus_m B_m(A)$, where $B_m(A) = L_m(A)/L_{m+1}(A)$.  The natural grading on $A$ by $S=(\ZZ_{\geq 0})^n$ descends to each $B_m$, and it is interesting to study the Hilbert series $h_{B_m}(t_1,\ldots,t_n)$ and $h_{B_m}(t):=h_{B_m}(t,\ldots,t)$.
The formula for $h_A$ is straightforward: 
$$h_A(t_1\ldots, t_n)=\frac{1}{1-(t_1 + \cdots + t_n)},$$ and implies that $\dim A[d]$ grows exponentially in $d$. It is thus a somewhat surprising fact that for $m\geq 2$, $\dim B_m[d]$ grows as a polynomial in $d$ of degree $n-1$.  For $m=2$, this was shown in \cite{FS}, and for $m\geq 3$ it was conjectured in \cite{FS} and proven in \cite{DE}.

The proof of this fact is based on the representation theory of the Lie algebra $W_n$ of polynomial vector fields on $\mathbb{C}^n$.  Namely, in \cite{FS}, an action of $W_n$  was constructed on each $B_m, m\geq 2$.  It was conjectured there, and proved in \cite{DE}, that each $B_m$ had a finite length Jordan-H\"older series, with respect to this action.  The proof relied, firstly, on the observation in \cite{FS} that all irreducible subquotients of $B_m$ could be identified with certain tensor field modules $\mathcal{F}_\lambda$ associated to a Young diagram $\lambda$, and secondly, on a bound for the sizes $|\lambda|$ that could occur:

\begin{thm}\cite{DE} For $m\geq 3$ and $\mathcal{F}_\lambda$ in the Jordan-H\"older series of $B_m(A_n)$, we have the following estimate on the size (i.e., the number of squares) of the Young diagram $\lambda$:
$$ |\lambda| \leq (m-1)^2 + 2\lfloor\frac{n-2}{2}\rfloor (m-1)$$
(where $\lfloor x\rfloor$ denotes the integer part of $x$).
\end{thm}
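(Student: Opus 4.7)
The strategy is to exploit the two compatible structures on $B_m(A_n)$: the polynomial $S$-grading and the $W_n$-action. Each tensor field module $\mathcal{F}_\lambda$ carries a natural grading whose minimal nonzero component sits in polynomial degree $|\lambda|$ and is isomorphic, as a $\mathfrak{gl}_n$-module, to the irreducible representation $V_\lambda$. Consequently, if $\mathcal{F}_\lambda$ occurs as a Jordan--H\"older constituent of $B_m(A_n)$, then $V_\lambda$ must appear in the $\mathfrak{gl}_n$-composition series of the graded piece $B_m(A_n)[|\lambda|]$, and our task reduces to bounding, for fixed $m$, the largest $|\lambda|$ for which this can happen.

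First I would give an explicit description of $B_m(A_n)[d]$ as a $\mathfrak{gl}_n$-module in terms of iterated commutators. Since $L_m$ is spanned by left-normed commutators of length at least $m$, every element of $B_m[d]$ is a coset of a sum of brackets $[x_{i_1},[x_{i_2},\ldots,[x_{i_{m-1}},w]\ldots]]$ with $w$ a word of length $d-(m-1)$. The symmetries of the bracket together with the $\mathfrak{gl}_n$-action on the index set give a surjection from a combinatorially controlled space onto $B_m[d]$, which translates to an upper bound on which shapes $\lambda$ can occur in each graded piece.

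The technical heart of the proof, which I expect to be the main obstacle, is the construction of ``lowering'' operators in $U(W_n)$ that strictly decrease $|\lambda|$ on the $V_\lambda$-isotypic component of a $W_n$-subquotient, and a verification that they are non-vanishing on the iterated-commutator model of $B_m$. The explicit form $(m-1)^2 + 2\lfloor(n-2)/2\rfloor(m-1)$ strongly suggests that these operators organize themselves around pairs of variables, the floor function reflecting the parity of $n$: each pair $(x_{2i-1},x_{2i})$ contributes an additive $2(m-1)$ to the allowed number of boxes, whereas a single unpaired variable contributes nothing. I would therefore design the lowering operators using Weyl-type pairings of derivations and multiplications on the relevant pairs of coordinates.

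Putting the two directions together, any hypothetical $\mathcal{F}_\lambda$ with $|\lambda|$ above the stated bound would yield, after enough applications of the lowering operators, a copy of some $V_\mu$ in a graded piece where the iterated-commutator model forces antisymmetrizations incompatible with the shape $\mu$, giving a contradiction. The delicate points will be (i) checking that the lowering operators really preserve the $W_n$-submodule generated by the lowest-weight piece, and (ii) tracking the combinatorics carefully enough so that the pairing structure, and in particular the floor function, emerges naturally rather than by ad hoc case analysis.
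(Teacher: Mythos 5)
The paper itself does not prove this theorem; it is cited from the prior work [DE], and the present paper's contribution is the sharper, linear-in-$m$ bound of Theorem \ref{2m-3}. Both [DE] and Section \ref{2m-3pf} of this paper follow the same framework: since $\bar B$ is generated in degree one, there is a surjection $f_m\colon(\Omega^{ev})^{\otimes m}\to B_m$; one then exhibits, via explicit Lie-polynomial identities, a $W_n$-submodule $K$ of the source killed by $f_m$, and bounds $|\lambda|$ for composition factors of the quotient by computing the Hilbert series of a quotient of a polynomial ring by a concrete ideal. The parity term $2\lfloor\frac{n-2}{2}\rfloor$ arises from the degrees of the even-form generators of $\Omega^{ev}$, not from any Weyl-type pairing of coordinates, and the quadratic factor $(m-1)^2$ in [DE] comes from iterating a degree bound across the $m-1$ bracket slots of $f_m$.

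Your proposal takes a genuinely different route, but every load-bearing step is absent rather than sketched. The central claim --- the existence of operators in $U(W_n)$ that ``strictly decrease $|\lambda|$'' --- is never made precise, and as stated it cannot work: $U(W_n)$ preserves each $W_n$-subquotient, so an element of $U(W_n)$ can never send a vector in a subquotient isomorphic to $\mathcal F_\lambda$ to one in a subquotient isomorphic to $\mathcal F_\mu$ with $|\mu|<|\lambda|$. The degree $-1$ operators $\partial_i\in W_n$ lower polynomial degree, not the shape of a composition factor. Moreover, nothing in the proposal actually uses $m$ beyond invoking the iterated-commutator model as a black box, so there is no mechanism that could produce the quadratic dependence on $m$; this is precisely the part you flag as ``the main obstacle'' and ``delicate points,'' which is where the proof either exists or does not. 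Your opening observation --- that $\mathcal F_\lambda$ occurring in $B_m$ forces $V_\lambda$ to appear as a $\mathfrak{gl}_n$-subquotient of $B_m[|\lambda|]$ --- is correct and is also the entry point for the paper's and [DE]'s approach, but the remainder of your argument is a plan rather than a proof.
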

This result, combined with well-known formulas for the Hilbert series of each $\mathcal{F}_\lambda$, established the finiteness of the Jordan-H\"older series, as well as the growth of $\dim B_m[d]$ as a degree $n-1$ polynomial in $d$, for $m\geq 2$. However, it was evident from experimental computations produced by Eric Rains that the maximal $|\lambda|$ for which $\mathcal{F}_\lambda$ occurs in $B_m$ should grow linearly rather than quadratically in $m$.  This indeed turns out to be the case.  Namely, the main result of this paper is the following improvement of the bound of \cite{DE}:

\begin{thm}\label{2m-3} Let $m\geq 3$.
\begin{enumerate}
\item For $\mathcal{F}_\lambda$ in the Jordan-H\"{o}lder series of $B_m(A_n)$, $$|\lambda| \leq 4m-7 + 2\lfloor \frac{n-2}{2}\rfloor.$$
\item Let $n=2$ or $3$.  For $\mathcal{F}_{\lambda}$ in the Jordan-H\"{o}lder series of $B_m(A_n)$, $$|\lambda| \leq 2m-3$$
\end{enumerate}
\end{thm}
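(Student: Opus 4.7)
The two parts of the theorem likely demand independent treatment: part (1) by a uniform argument applicable for all $n$, and part (2) by a sharper analysis exploiting the special structure of $B_m(A_n)$ for $n \leq 3$. The fact that the correction term $2\lfloor\frac{n-2}{2}\rfloor$ is shared with the \cite{DE} bound suggests that the same kind of reduction $n \to n-2$ is used to convert a base-case bound into the general-$n$ bound, losing a constant at each step. The coefficient $4$ (as opposed to $2$) on $m$ in part (1) suggests, however, that part (1) is not obtained from part (2) by this reduction but rather by a separate, looser method.

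For part (1), the plan is to refine the approach of \cite{DE}, whose quadratic term $(m-1)^2$ arises from bounding the total degree carried by a minimal-multidegree vector of $\mathcal{F}_\lambda$ inside $B_m(A_n)$. The idea would be to replace the coarse estimate used in \cite{DE} with a more efficient spanning set of $B_m(A_n)$: one constructs iterated commutators of length $m$ in a normal form that accumulates only a bounded amount of additional multidegree beyond the trivial lower bound of $m$. Combined with the fact that the lowest-multidegree component of $\mathcal{F}_\lambda$ has weight $\lambda$ with respect to the Cartan subalgebra of $\mathfrak{gl}_n \subset W_n$, this forces $|\lambda|$ to be linear in $m$. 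The reduction $n \to n-2$, likely via a restriction functor from $W_n$-modules to $W_{n-2}$-modules that loses at most two boxes of the Young diagram, then accounts for the term $2\lfloor\frac{n-2}{2}\rfloor$.

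For part (2), I would analyze $B_m(A_n)$ for $n = 2, 3$ directly. For $n=2$, the Feigin--Shoikhet identification of $B_2(A_2)$ with an essentially Poisson-geometric object on $\mathbb{A}^2$ gives a natural framework; extending this perspective inductively through the $L_m$ filtration, one should obtain $|\lambda| \leq 2m - 3$ by tracking the Poisson bracket degree. For $n=3$, one does not have such a direct geometric interpretation, and one must work with the $W_3$-weight structure on iterated commutator spanning sets, using the $\mathfrak{sl}_3$-weight constraints to rule out $\mathcal{F}_\lambda$ with $|\lambda| > 2m-3$.

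The main obstacle is expected to be the $n = 3$ case of part (2). The bound $2m - 3$ is believed tight (per Rains's experiments), so the argument must be essentially sharp: one needs precise combinatorial identities among length-$m$ iterated commutators of three variables that force the cancellation of all $\mathcal{F}_\lambda$-isotypic components with $|\lambda| > 2m-3$. By contrast, part (1) allows a generous slack (the jump from coefficient $2$ to $4$ on $m$), and part (2) for $n = 2$ reduces to known Poisson techniques, so neither should be as technically demanding as the $n = 3$ argument.
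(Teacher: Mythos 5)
Your proposal does not match the paper's argument, and as stated it has a structural gap that would likely prevent it from yielding the theorem. The paper's proof of both parts is \emph{unified}, not independent: in Section \ref{2m-3pf} it considers the surjection $f_m:(\Omega^{ev})^{\otimes m}\to B_m$, uses Theorem \ref{xyxy} to replace the source by a much smaller $W_n$-module $Y$ (namely $(\Omega^0)^{\otimes m}$ when $n=2,3$, and $(\Omega^0+\Omega^2)^{\ot(m-2)}\ot(\oplus_{j+k\le\lfloor(n-2)/2\rfloor}\Omega^{2j}\ot\Omega^{2k})$ when $n\geq 4$), and then, crucially, identifies a large $W_n$-submodule $K\subset Y$ annihilated by $f_m$. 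This $K$ is built from the cubic commutator identity of Lemma \ref{identity} (expressing $[u^3,[v,w]]$ in terms of shorter pieces) together with an exactness identity for forms; via Lemma \ref{cor2}, $K$ corresponds to the ideal $J=\sum_{j=1}^{m-2}J_j^3+J_{m-1}^2$ in $R=\CC[x_1,\dots,x_n]^{\otimes m}$, where $J_j=(x_i^j-x_i^{j+1})_i$. The bound on $|\lambda|$ then drops out of the degree of the polynomial $h_{Y/JY}\cdot(1-t_1)\cdots(1-t_n)$.

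Your plan captures only a vague version of the first step (``more efficient spanning set''), but misses the decisive second step: without exhibiting a large kernel $K$ and passing to a quotient, bounding the multidegree of a minimal vector of $\mathcal{F}_\lambda$ by a normal-form argument would at best reproduce the quadratic bound of \cite{DE}, because the linear bound requires controlling cancellations, not just lengths. The other specific mechanisms you propose do not appear in the paper and would not produce the stated constants: the term $2\lfloor(n-2)/2\rfloor$ is \emph{not} a loss from a restriction functor $W_n\to W_{n-2}$, but simply the degree of the factor $\sum\sigma_{2j}\sigma_{2k}$ in the Hilbert series $h_X$ of the $R$-module generators of $Y$; the coefficient $4$ in $4m-7$ is not ``slack from a looser method'' but arises because, for $n\geq 4$, one gets degree $2$ per slot both from $R/J_j^3$ \emph{and} from the $(1+\sigma_2)^{m-2}$ factor in $h_X$, whereas for $n\le 3$ only the former contributes (giving $2m-3$). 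Finally, the paper does not treat $n=2$ by Poisson geometry nor $n=3$ by $\mathfrak{sl}_3$-weight constraints; both small-$n$ cases are handled identically, the only difference from $n\geq4$ being that Theorem \ref{xyxy}(3)--(4) permits taking $Y=(\Omega^0)^{\otimes m}$, so $h_X=1$ and the degree is exactly $2m-3$.
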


This is proven by means of some elementary commutative algebra, and a technical but very useful result:

\begin{thm}\label{xyxy} Let $m\geq2$.
\begin{enumerate}
\item For all $n$, we have:
$$B_2=\sum_i[x_i,B_1].$$
\item For $n\geq 4$, we have:
$$B_{m+1}= \sum_{i}[x_i,B_m]+\sum_{i\leq j}[x_ix_j,B_m]+\sum_{i<j<k}[x_i[x_j,x_k],B_m].$$
\item For $n=3$, we have:
$$B_{m+1}=\sum_i[x_i,B_m]+\sum_{i \leq j}[x_ix_j,B_m].$$ 
\item For $n=2$, we have:
$$B_{m+1}= \sum_i[x_i,B_m]+\sum_{i < j}[x_ix_j,B_m].$$ 
\end{enumerate}
\end{thm}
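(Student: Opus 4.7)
The plan is to prove all parts by induction on the degree of the monomial $a$ in commutators $[a, u]$ spanning $L_{m+1}$ (or $L_2$ for part (1)). The central tool is the congruence
\[
[x_i a', u] \equiv [a', u x_i] + [x_i, u a'] \pmod{L_{m+2}},
\]
valid for any monomial $a' \in A$ and any $u \in L_m$. To derive it, apply the Leibniz rule $[x_i a', u] = x_i[a', u] + [x_i, u] a'$, then shift $x_i$ past $[a', u]$ (permissible modulo $L_{m+2}$ since $[A, L_{m+1}] \subseteq L_{m+2}$) and apply the identity $[a, bc] = [a, b] c + b[a, c]$, rearranged as $[a, b]c = [a, bc] - b[a, c]$ and $b[a, c] = [a, bc] - [a, b]c$, to redistribute $a'$ and $x_i$ inside the resulting commutators. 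The four error terms produced are $-u[a', x_i]$ and $-u[x_i, a']$, which cancel identically since $[a', x_i] + [x_i, a'] = 0$.

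Granted this congruence, the inductive step is immediate. For $\deg a \geq 2$, write $a = x_i a'$; the term $[x_i, u a']$ already lies in the target set $\sum_j [x_j, B_m]$, while $[a', u x_i]$ has a strictly smaller-degree monomial as its first argument, so the inductive hypothesis applies. The base case $\deg a = 1$ is trivial. Iterating shows $L_{m+1} \subseteq \sum_j [x_j, L_m] + L_{m+2}$, hence $B_{m+1} = \sum_j [x_j, B_m]$. The converse containment is trivial, and each of the additional generators in parts (2)--(4) automatically lies in $B_{m+1}$, so the stated equalities hold. In fact the degree-$3$ generators $[x_i[x_j, x_k], u]$ are identically zero in $B_{m+1}$ since $x_i[x_j, x_k] \in L_2$ forces $[x_i[x_j, x_k], L_m] \subseteq [L_2, L_m] \subseteq L_{m+2}$. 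Part (1) follows the same template with $u$ replaced by an arbitrary $b \in A$ and $L_{m+2}$ by $L_3$.

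The main obstacle is purely technical: one must first establish the filtration identities $[L_p, L_q] \subseteq L_{p+q}$ and $L_p \cdot L_q \subseteq L_{p+q-1}$ — each a short double induction using Jacobi and the commutator Leibniz rule, together with the fact that each $L_q$ is a two-sided ideal — since these underpin every step of the manipulation. Once they are in place, carefully bookkeeping the four error terms to verify their cancellation finishes the derivation of the key congruence, and the rest is formal induction. The case-split by $n$ in the statement simply reflects the different inventories of degree-$\leq 3$ monomials $x_i$, $x_i x_j$, $x_i[x_j, x_k]$ available as potential generators for each value of $n$, not any substantive difference in the reduction argument.
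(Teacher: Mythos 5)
Your proposed congruence
\[
[x_i a', u] \equiv [a', u x_i] + [x_i, u a'] \pmod{L_{m+2}}
\]
is correct as an identity in $A$, but it does not do what you want, because the elements $u x_i$ and $u a'$ are \emph{not} in $L_m$. Your inductive step treats $[x_i, u a']$ as ``already in the target set $\sum_j[x_j, B_m]$,'' but this would require $u a' \in L_m$, i.e., that $L_m$ is stable under right multiplication by $A$. Your closing paragraph asserts precisely that --- ``each $L_q$ is a two-sided ideal'' --- and that assertion is false. Already $L_2 = [A,A]$ fails to be an ideal: in the free algebra on $x,y,z$, the element $x[y,z] = xyz - xzy$ is not in $[A,A]$, since $A/[A,A]$ is the space of cyclic words and $xyz$, $xzy$ are distinct cyclic words. (The relevant fact proved in \cite{FS} is that the \emph{two-sided ideal generated by} $L_3$, namely $A[A,[A,A]]A = A[A,[A,A]]$, is central modulo lower terms; this is not the statement that $L_m$ itself is an ideal.) Likewise $[a', u x_i]$ cannot be fed back into the induction: the induction is on $\deg a$ with $u$ fixed in $L_m$, and $u x_i \notin L_m$.

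The failure is not merely technical. The conclusion your argument purports to establish, $B_{m+1} = \sum_j [x_j, B_m]$, is actually false, which is why the theorem carries the quadratic (and, for $n\ge 4$, cubic) generators. For example, $B_4(A_2) = (3,1)+(3,2)$, so $|\lambda|=5$ occurs; if linear generators sufficed, the Hilbert-series argument of Section 4 (using the ideal $J=\sum_j J_j^2$ rather than $\sum_j J_j^3 + J_{m-1}^2$) would force $|\lambda|\le m-1 = 3$, a contradiction. So the quadratic terms are essential, and the case split in the theorem reflects a real phenomenon, not a bookkeeping convenience. The paper's proof goes through an entirely different mechanism: it transfers the problem to the quantized algebra of even forms $\Omega^{ev}_*$ via the Feigin--Shoikhet isomorphism, uses the explicit cubic identity of Lemma \ref{identity} to show symmetrized cubes $S(a,b,c)$ can be absorbed (Corollary \ref{symm}), and proves the vector-space decomposition $\Omega^{ev}_* = X + E + \Omega^{ev}_{ex,*}$ (Lemma \ref{lemmae}), where $X$ is exactly the low-degree monomial span appearing in the theorem; parts (3) and (4) then use further special identities (Lemma \ref{second} and the $\epsilon(\sigma)$ rearrangement lemma from \cite{DE}). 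Your approach has no counterpart to any of these steps.
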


\begin{rem} The presence of the cubic terms in (2) above appears superfluous from computer experiments, and we conjecture that they can be omitted (see also Lemma \ref{B3identity} and Corollary \ref{B3case}).  Were this so, it would imply the bound $|\lambda|\leq 2m-3 + 2\lfloor\frac{n-2}{2}\rfloor$ in Theorem \ref{2m-3}.
\end{rem}

\begin{defn} The Hilbert polynomials $p_{mn}(d)$ are defined by
$$p_{mn}(d)=\dim B_m(A_n)[d], \quad (d\gg 0).$$
The \emph{density}, $a_{mn}$, is the leading coefficient of $p_{mn}$, times $(n-1)!$.
\end{defn}

\begin{ex} The Hilbert polynomial of $\CC[x_1,\ldots,x_n]$ is ${n+d-1}\choose {n-1}$, with leading coefficient $\frac{1}{(n-1)!}$, so the density is one.  More generally, if $\lambda_1\geq 2$ or $\lambda=(1^n)$, the density of $\mathcal{F}_\lambda$ is equal to the dimension of the irreducible representation $V_\lambda$ of $\mathfrak{gl}_n$ with highest weight $\lambda$.
\end{ex}

As a corollary to Theorem \ref{xyxy}, we derive a bound for the density $a_{mn}$:

\begin{cor}\label{rank} 
For $n=2$, we have $$a_{m+1,n} \leq 3 a_{m,n}.$$
For $n=3$, we have $$a_{m+1,n} \leq 9 a_{m,n}.$$
For $n\geq 4$, we have $$a_{m+1,n} \leq \frac{n^3+11n}{6}a_{m,n}.$$
\end{cor}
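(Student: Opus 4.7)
The plan is to apply Theorem \ref{xyxy} directly and pass to leading coefficients. In each case, let $\{P_1,\ldots,P_N\}$ denote the finite list of homogeneous polynomials (monomials or cubic brackets) whose commutators with $B_m$ generate $B_{m+1}$ according to the relevant clause of Theorem \ref{xyxy}, and write $d_j=\deg P_j$. The key observation is that each summand $[P_j,B_m]$ is the image of the linear map $b\mapsto[P_j,b]$ from $B_m$ into $B_{m+1}$, which raises degree by $d_j$. Theorem \ref{xyxy} therefore gives
$$\dim B_{m+1}[d]\;\leq\;\sum_{j=1}^{N}\dim B_m[d-d_j]\;=\;\sum_{j=1}^{N}p_{m,n}(d-d_j)$$
for all $d$ sufficiently large.

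Next I would extract the leading terms. Each $p_{m,n}(d-d_j)$ is a polynomial in $d$ of degree $n-1$ with the same top coefficient $a_{m,n}/(n-1)!$ as $p_{m,n}(d)$, so the sum on the right is a polynomial with leading coefficient $N\cdot a_{m,n}/(n-1)!$. Multiplying through by $(n-1)!$ yields the desired bound $a_{m+1,n}\leq N\cdot a_{m,n}$.

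Finally it remains to count $N$ in each case from the corresponding clause of Theorem \ref{xyxy}. For $n=2$ we have $N=2+\binom{2}{2}=3$; for $n=3$ we have $N=3+\binom{4}{2}=9$; and for $n\geq 4$,
$$N\;=\;n+\binom{n+1}{2}+\binom{n}{3}\;=\;\frac{6n+3n(n+1)+n(n-1)(n-2)}{6}\;=\;\frac{n^3+11n}{6},$$
matching the stated bounds. The essential content is already packaged into Theorem \ref{xyxy}, so no step here presents a substantial obstacle; the only real point is the observation that shifting the argument of a polynomial preserves its leading coefficient.
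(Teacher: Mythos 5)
Your argument is correct and is precisely what the paper means by its one-line remark that the corollary ``follows immediately by counting the dimension of $X$'': the generators listed in the relevant clause of Theorem~\ref{xyxy} (equivalently the nonconstant part of $X$) are homogeneous, so passing to Hilbert polynomials and using that a shift $d\mapsto d-d_j$ preserves the leading coefficient gives exactly the stated bound. Your counts $3$, $9$, and $n+\binom{n+1}{2}+\binom{n}{3}=\frac{n^3+11n}{6}$ match the theorem in each case, so the proposal is complete and in line with the paper's proof.
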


As applications of the theory, we are able to prove the following conjecture of P. Etingof describing the complete structure of $B_3(A_n)$:

\begin{thm}\label{PavelConj}
$$B_3(A_n)=\bigoplus_{i=1}^{\lfloor\frac{n}{2}\rfloor}(2,1^{2i-1},0^{n-2i}).$$
\end{thm}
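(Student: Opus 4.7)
The plan is to pin down $B_3(A_n)$ by a squeeze between an upper bound from Theorem \ref{xyxy} and a lower bound built from explicit highest weight vectors. For the upper bound I would use Theorem \ref{xyxy} parts (2)--(4) to realize $B_3$ as the image of finitely many $W_n$-equivariant operators $\operatorname{ad}(y)\colon B_2 \to B_3$ indexed by $y \in \{x_i,\, x_ix_j,\, x_i[x_j,x_k]\}$, together with the description of $B_2$ from \cite{FS} as a direct sum of form modules $\Omega^{2k} = \mathcal{F}_{(1^{2k}, 0^{n-2k})}$. Reorganizing the generators by their $\mathfrak{gl}_n$-type ($V$, $S^2 V$, and a hook-type piece) makes the domain of the resulting surjection a direct sum of tensor products $V_\mu \otimes \Omega^{2k}$; one then decomposes each factor via Littlewood--Richardson and keeps only irreducible constituents that can lift to tensor field modules, using Theorem \ref{2m-3} to discard candidates with $|\lambda|$ exceeding the permitted range.

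For the matching lower bound, for each $1 \le i \le \lfloor n/2 \rfloor$ I would produce an explicit $\mathfrak{gl}_n$-highest weight vector $v_i \in B_3[2i+1]$ of weight $(2, 1^{2i-1}, 0^{n-2i})$, obtained by antisymmetrizing a degree $(2i+1)$ expression such as $[x_1, [x_1 x_2, x_3 x_4 \cdots x_{2i+1}]]$ over the permutation action of $S_{2i}$ on $\{1, 2, \ldots, 2i\}$. Verifying that each $v_i$ is nonzero in $B_3$ and that the $W_n$-submodule it generates is isomorphic to $\mathcal{F}_{(2, 1^{2i-1}, 0^{n-2i})}$ then embeds each proposed summand into $B_3$.

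Matching Hilbert series of the upper bound against the sum of the explicit lower bounds would then force each $\mathcal{F}_{(2, 1^{2i-1}, 0^{n-2i})}$ to occur with multiplicity exactly one and would rule out contributions from any other tensor field module. The direct sum decomposition (rather than merely a filtration) follows from the fact that the proposed summands have pairwise distinct $\mathfrak{gl}_n$-types and that, by Theorem \ref{2m-3}, there is no room for nontrivial $W_n$-extensions between them within $B_3$.

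The principal obstacle is the cubic generating term $\sum_{i<j<k}[x_i[x_j,x_k], B_2]$ in Theorem \ref{xyxy}(2), conjecturally redundant (per the remark following Theorem \ref{xyxy}) but still present in the upper bound; one must show that it introduces no new $\mathfrak{gl}_n$-types beyond those already produced by the linear and quadratic terms, since otherwise the upper bound would overshoot the proposed decomposition. A secondary technical step is verifying nonvanishing of each highest weight vector $v_i$ modulo $L_4$, which reduces to an explicit but intricate calculation in the free algebra.
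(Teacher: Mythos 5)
Your overall structure (squeeze between an upper bound from generating sets and a lower bound from explicit highest weight vectors) matches the paper's strategy, and you correctly identify the cubic term in Theorem \ref{xyxy}(2) as the principal obstacle. However, you treat it as a hope to be verified rather than as a theorem to be proved, and that is where the gap lies: the bound you would get from Theorem \ref{2m-3}(1) at $m=3$ is $|\lambda|\le 5+2\lfloor\frac{n-2}{2}\rfloor$, which for $n\ge 4$ is two short of tight and admits spurious diagrams such as $(3,2,1^{\,\cdot})$. The paper's resolution is Lemma \ref{B3identity}, a commutator identity which eliminates the cubic generators precisely when $m=3$; its proof is itself nontrivial, depending on a symmetry argument showing that the error term $\psi$ must span a sign representation and on Lemma \ref{simplemult} (no $\mathcal F_{(1^k,0^{n-k})}$ occurs in $B_m$ for $m\ge 3$). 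Once those are in place, Corollary \ref{B3case} gives $B_3=\sum_i[x_i,B_2]+\sum_{i\le j}[x_ix_j,B_2]$ and the tight bound $|\lambda|\le 3 + 2\lfloor\frac{n-2}{2}\rfloor$ of Lemma \ref{B3bound}. Without Lemma \ref{B3identity}, your Littlewood--Richardson decomposition overshoots and your Hilbert series matching cannot close.

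Two further places where the paper does something sharper than your sketch. For the multiplicity-at-most-one bound, the paper does not rely on matching total Hilbert series; instead it passes to $R$-module quotients $Y_p/JY_p$ and identifies a specific $\mathfrak{gl}_n$-module $M=(\oplus_j S^jV)\otimes\Lambda^n V\otimes(\oplus_m S^m V)$ surjecting onto the top piece, in which the multiplicity of $(2,1^{n-1})$ is computed to be one; the general case then follows by induction on $n$. For nonvanishing of the highest weight vectors, the paper avoids the ``intricate computation in the free algebra'' you flag by representing the generators inside $A\otimes E$ with $A$ free on two generators and $E$ an exterior algebra, so that $v_n$ is sent to a manifestly nonzero element $2^{k+1}[a,b]\otimes\zeta_0\wedge\cdots\wedge\zeta_{2k}$ in an explicitly computed graded piece of $[B,[B,B]]/[B,[B,[B,B]]]$. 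Finally, the directness of the sum is established not by an extension-theoretic argument but by showing each $(2,1^{2i-1},0^{n-2i})$ is genuinely a submodule: its generator $v_i$ is killed by all $\partial_j$, and the $|\lambda|$ bound then forces the submodule it generates to be exactly that summand. You should fill these steps in before the argument is complete.
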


As a corollary we derive the following conjecture of B. Shoikhet, which motivated the first conjecture:

\begin{cor}\label{BConj}  Let $B_3(A_n)[1,\ldots,1]$ denote the subspace of $B_3(A_n)$ of degree 1 in each generator.  We have
$$\dim B_3(A_n)[1,\ldots,1]=(n-2)2^{n-2}.$$
\end{cor}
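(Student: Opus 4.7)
The plan is to combine the decomposition in Theorem \ref{PavelConj} with the standard multigraded Hilbert series for tensor field modules and then evaluate a short binomial identity. Setting $\lambda_i = (2, 1^{2i-1}, 0^{n-2i})$, Theorem \ref{PavelConj} yields
$$\dim B_3(A_n)[1,\ldots,1] = \sum_{i=1}^{\lfloor n/2 \rfloor} \dim \mathcal{F}_{\lambda_i}[1,\ldots,1].$$
Each $\lambda_i$ satisfies $(\lambda_i)_1 = 2$, and the tensor field module $\mathcal{F}_{\lambda_i}$ then has the standard multigraded Hilbert series
$$h_{\mathcal{F}_{\lambda_i}}(t_1, \ldots, t_n) = \frac{s_{\lambda_i}(t_1, \ldots, t_n)}{\prod_{j=1}^n (1-t_j)},$$
whose specialization at $t_j = t$ recovers the density computation recorded in the introduction.

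The coefficient of $t_1 t_2 \cdots t_n$ in this rational function picks out precisely those monomials in $s_{\lambda_i}$ whose exponents all lie in $\{0,1\}$. Since $|\lambda_i| = 2i+1$, any such monomial has the form $t_{j_1} \cdots t_{j_{2i+1}}$ for some $(2i+1)$-subset of $\{1,\ldots,n\}$, so there are $\binom{n}{2i+1}$ of them. By $S_n$-symmetry of the Schur polynomial, each appears with the same coefficient, namely the Kostka number $K_{\lambda_i,(1^{2i+1})}$, which counts standard Young tableaux of the hook shape $(2, 1^{2i-1})$. Such a tableau is determined by the entry placed in cell $(1,2)$, which can be any of $2, 3, \ldots, 2i+1$, yielding $K_{\lambda_i,(1^{2i+1})} = 2i$. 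Consequently $\dim \mathcal{F}_{\lambda_i}[1,\ldots,1] = 2i \binom{n}{2i+1}$.

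It remains to verify the identity $\sum_{i=1}^{\lfloor n/2 \rfloor} 2i \binom{n}{2i+1} = (n-2)2^{n-2}$. Setting $k = 2i+1$ rewrites the left-hand side as $\sum_{k \text{ odd}} (k-1)\binom{n}{k}$. Using $k\binom{n}{k} = n\binom{n-1}{k-1}$ together with the standard identities $\sum_{k \text{ odd}} \binom{n}{k} = 2^{n-1}$ and $\sum_{j \text{ even}} \binom{n-1}{j} = 2^{n-2}$, this expression equals $n \cdot 2^{n-2} - 2^{n-1} = (n-2)2^{n-2}$. The only nontrivial input is Theorem \ref{PavelConj}; the character computation and binomial manipulation are routine, so I do not anticipate any real obstacle.
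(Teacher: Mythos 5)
Your proposal is correct and follows exactly the route the paper indicates: the paper simply remarks that the corollary "can now be derived by counting the dimension of the graded component for the decomposition in Theorem \ref{PavelConj}," and your argument is a careful fill-in of that count. The multilinear coefficient of $h_{\mathcal{F}_{\lambda_i}}=s_{\lambda_i}/\prod(1-t_j)$ correctly reduces to summing the coefficients of the squarefree monomials in $s_{\lambda_i}$, each equal to the Kostka number $K_{\lambda_i,(1^{2i+1})}=2i$ (the hook SYT count), and your binomial manipulation $\sum_{k\text{ odd}}(k-1)\binom{n}{k}=n\cdot 2^{n-2}-2^{n-1}=(n-2)2^{n-2}$ is accurate.
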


\noindent Here, as elsewhere in the paper, we have used the abbreviation $(p_1,\ldots,p_n)$ instead of $\mathcal{F}_{(p_1,\ldots,p_n)}$. Combining our bounds in Theorem \ref{2m-3} with MAGMA\cite{BCP} computations, we are also able to give the complete Jordan-H\"older series of $B_m(A_n)$ for many new $m$ and $n$. 

The structure of the paper is as follows.  In Section \ref{pre} we briefly review the representation theory
of the Lie algebra $W_n$, as well as the results of \cite{FS} we will use. In Section \ref{xyxypf} we prove Theorem \ref{xyxy} and Corollary \ref{rank}.  In Section \ref{2m-3pf} we prove Theorem \ref{2m-3}.  In Section \ref{PavelConjPf}, we prove Theorem \ref{PavelConj}, and present as a corollary a geometric description of the bracket map of $\bar{B}_1$ with $B_2$.  In Section \ref{decomps}, we present the Jordan-H\"older series for $B_m(A_n)$ for small $m$ and $n$.

\subsection{Acknowledgments}  We would like to heartily thank Pavel Etingof for many helpful conversations as the work progressed, and especially for explaining how to derive Theorem \ref{PavelConj} from our work.  The work of both authors was funded by the Research Science Institute, at MIT.  The research of the second author was partially supported by the NSF grant DMS-0504847.

\section{Preliminaries}\label{pre}
In this section, we recall definitions for the Lie algebra $W_n$, the tensor field modules $\mathcal{F}_\lambda$, and the quantized algebra of even differential forms $\Omega_*^{ev}$.

\begin{defn} Let $W_n=Der(\CC[x_1,\ldots,x_n])$ denote the Lie algebra of polynomial vector fields,
$$W_n = \oplus_i\CC[x_1,\ldots,x_n] \partial_i,$$
with bracket $[p\partial_i,q\partial_j]=p\frac{\partial q}{\partial x_i} \partial_j - q\frac{\partial p}{\partial x_j}\partial_i$.
\end{defn}

Let $\mathfrak{gl}_n$ denote the Lie algebra of $n$ by $n$ matrices. A Young diagram
$$\lambda=(\lambda_1\geq \lambda_2\geq \cdots \geq \lambda_n)$$
with $n$ rows gives rise to an finite dimensional irreducible representation $V_{\lambda}$ of $\mathfrak{gl}_n$ contained in the space $(\mathbb{C}^{n*})^{\otimes |\lambda|}$ of covariant tensors of rank $|\lambda|$ on $\mathbb{C}^n$. Let $\widetilde{\mathcal{F}}_{\lambda}$ be the space of polynomial tensor fields of type $V_{\lambda}$ on $\mathbb{C}^n$. As a vector space, 
$$\widetilde{\mathcal{F}}_{\lambda}=\mathbb{C}[x_1,\ldots,x_n]\otimes V_{\lambda}.$$ 

It is well known that $\widetilde{\mathcal{F}}_{\lambda}$ is a representation of $W_n$ with action given by the standard Lie derivative formula for action of vector fields on covariant tensor fields (see, e.g. \cite{SL} for details).  

\begin{thm}\cite{ANR}
If $\lambda_1 \geq 2$, or if $\lambda=(1^n)$, then $\widetilde{\mathcal{F}}_{\lambda}$ is irreducible. Otherwise, if $\lambda =(1^k,0^{n-k})$, then $\widetilde{\mathcal{F}}_{\lambda}$ is the space $\Omega^k=\Omega^k(\mathbb{C}^n)$ of polynomial differential k-forms on $\mathbb{C}^{n}$, and it contains a unique irreducible submodule which is the space of all closed differential k-forms.
\end{thm}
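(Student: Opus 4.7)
The plan is to analyze $W_n$-submodules of $\widetilde{\mathcal{F}}_\lambda = \mathbb{C}[x_1,\ldots,x_n] \otimes V_\lambda$ by exploiting its polynomial grading together with the subalgebra $\mathfrak{gl}_n \subset W_n$ of linear vector fields $x_i \partial_j$. The Euler field $E = \sum_i x_i \partial_i$ acts semisimply on $\widetilde{\mathcal{F}}_\lambda$, with the degree-$d$ component as its eigenspace for eigenvalue $d + |\lambda|$, so every $W_n$-submodule is automatically graded. The constant vector fields $\partial_i \in W_n$ act simply by differentiating the polynomial factor, strictly lowering degree, so for any nonzero graded submodule $M \subseteq \widetilde{\mathcal{F}}_\lambda$ an element of minimal degree must be annihilated by every $\partial_i$ and hence lies in $\widetilde{\mathcal{F}}_\lambda[0] = V_\lambda$. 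By $\mathfrak{gl}_n$-irreducibility of $V_\lambda$, we then have $V_\lambda \subseteq M$.

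The main step is to show that, under the hypothesis $\lambda_1 \geq 2$ or $\lambda = (1^n)$, the $W_n$-submodule generated by $V_\lambda$ already equals $\widetilde{\mathcal{F}}_\lambda$. I would study the iterated raising maps $V_\lambda \to \widetilde{\mathcal{F}}_\lambda[d]$ obtained from operators of the form $x_i x_j \partial_k$ of positive degree in $W_n$, decompose the target $\widetilde{\mathcal{F}}_\lambda[d] \cong S^d(\mathbb{C}^n) \otimes V_\lambda$ into $\mathfrak{gl}_n$-irreducibles via Pieri's rule, and verify surjectivity constituent-by-constituent. This Pieri-rule bookkeeping is the main technical obstacle: the conditions $\lambda_1 \geq 2$ or $\lambda = (1^n)$ are precisely the combinatorial conditions ensuring that no constituent becomes orthogonal to the image of $V_\lambda$ under the raising action, while in the failing cases the missed constituents will assemble into the closed-form submodule below.

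For the remaining case $\lambda = (1^k, 0^{n-k})$ with $0 \leq k < n$, one has $\widetilde{\mathcal{F}}_\lambda = \Omega^k$, the space of polynomial $k$-forms. Cartan's magic formula $\mathcal{L}_X = d\iota_X + \iota_X d$ shows that the de Rham differential $d \colon \Omega^k \to \Omega^{k+1}$ is $W_n$-equivariant, so its kernel $Z^k$ of closed $k$-forms is a proper nonzero $W_n$-submodule. By the polynomial Poincar\'e lemma, $Z^k$ in positive polynomial degree coincides with $d(\Omega^{k-1})$, with the constants adjoined when $k = 0$. Irreducibility of $Z^k$ follows by rerunning the generation argument of the previous paragraph inside $Z^k$, and uniqueness of $Z^k$ as the minimal nonzero submodule follows because any submodule $N$ not contained in $Z^k$ would contain a non-closed form, whereupon the generation argument applied to $N$ forces $N = \Omega^k$.
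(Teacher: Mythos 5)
The paper does not prove this theorem; it is quoted verbatim from Rudakov \cite{ANR} and used as a black box, so there is no internal proof to compare against. Evaluating your sketch on its own terms: the scaffolding is right. The Euler field $E=\sum x_i\partial_i$ acts on the degree-$d$ piece by $d+|\lambda|$, so submodules are indeed graded; a lowest-degree vector in a nonzero submodule is killed by all $\partial_i$, hence lies in $\widetilde{\mathcal{F}}_\lambda[0]=V_\lambda$; and $\mathfrak{gl}_n$-irreducibility of $V_\lambda$ then forces $V_\lambda\subseteq M$. The identification of closed forms as a submodule via Cartan's formula, and the use of the polynomial Poincar\'e lemma, are also correct. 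But you have placed the entire mathematical content of Rudakov's theorem into a single sentence you flag yourself as ``the main technical obstacle'' and then do not carry out. Showing that the $W_n$-module generated by $V_\lambda$ is all of $\widetilde{\mathcal{F}}_\lambda$ when $\lambda_1\geq 2$ or $\lambda=(1^n)$ --- and that it is exactly the closed forms when $\lambda=(1^k,0^{n-k})$, $k<n$ --- is not routine Pieri bookkeeping. One must analyze, for each degree $d$, the $\mathfrak{gl}_n$-equivariant map from $(W_n)_{\geq 1}$-words applied to $V_\lambda$ into $S^d\CC^{n*}\otimes V_\lambda$ and check it hits every Pieri constituent, and this is precisely where the dichotomy between the cases $\lambda_1\geq 2$ and $\lambda_1=1$ is decided; nothing in your writeup identifies which constituent drops out when $\lambda=(1^k,0^{n-k})$ or why it drops out. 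Until that computation is supplied, this is an outline of the standard approach rather than a proof.

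One smaller logical point: for the ``unique irreducible submodule'' claim you argue that any $N\not\subseteq Z^k$ forces $N=\Omega^k$ (so $Z^k$ is the unique maximal proper submodule), but what the theorem asserts is that $Z^k$ is contained in every nonzero submodule and is itself irreducible. That does follow from your framework, since the lowest-degree argument gives $V_\lambda\subseteq N$ and $V_\lambda$ consists of constant-coefficient, hence closed, forms --- so once you know $Z^k$ is generated by $V_\lambda$ you get $Z^k\subseteq N$. But that again rests on the ungenerated generation step, and you should state the argument in the minimality form rather than the maximality form to match what is being claimed.
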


Denote by $\mathcal{F}_{\lambda}$ the irreducible submodule of $\widetilde{\mathcal{F}}_{\lambda}$, so that $\mathcal{F}_{\lambda}=\widetilde{\mathcal{F}}_{\lambda}$ unless \mbox{$\lambda = (1^k,0^{n-k})$} for some $1\leq k \leq n-1$.

\begin{thm}\label{semisimple}\cite{ANR}
Any $W_n$-module on which the operators $x_i\partial_i$, for $i=1,\ldots,n$, act semisimply with 
nonnegative integer eigenvalues and finite dimensional common 
eigenspaces has a Jordan-H\"{o}lder series
whose composition factors are $\mathcal{F}_{\lambda}$, each occurring with finite
multiplicity. \end{thm}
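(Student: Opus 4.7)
Proof proposal. The plan is to construct a Jordan-H\"older filtration of $M$ by iteratively extracting tensor-field subquotients from ``singular'' vectors in $M$.

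Under the hypothesis, $M = \bigoplus_{\alpha \in \ZZ_{\geq 0}^n} M[\alpha]$ decomposes into finite-dimensional common eigenspaces of the $x_i \partial_i$, refining to a total-degree grading $M = \bigoplus_{d \geq 0} M[d]$. The subalgebra $\mathfrak{gl}_n = \langle x_i \partial_j \rangle \subset W_n$ preserves total degree, each $\partial_i$ lowers it by one, and $W_n^{(1)} := \bigoplus_{k \geq 1} W_n^k$ raises it, so every $M[d]$ is a finite-dimensional polynomial $\mathfrak{gl}_n$-module. Define $M^- := \bigcap_i \ker(\partial_i \vert_M)$. The identity $[\partial_k, x_i \partial_j] = \delta_{ki}\, \partial_j$ implies $M^-$ is $\mathfrak{gl}_n$-stable, so it decomposes as $M^- = \bigoplus_\lambda V_\lambda^{\oplus m_\lambda}$, with each multiplicity $m_\lambda$ finite since all weights of $V_\lambda$ have total degree $|\lambda|$ and $M[|\lambda|]$ is finite-dimensional.

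The central construction associates to each embedding $V_\lambda \hookrightarrow M^-$ a nonzero $W_n$-homomorphism $\widetilde{\mathcal{F}}_\lambda \to M$. The subspace $\mathfrak{q} := \mathrm{span}(\partial_i) \oplus \mathfrak{gl}_n$ is a subalgebra of $W_n$, and extending $V_\lambda$ to a $\mathfrak{q}$-module by letting the $\partial_i$'s act as zero makes any $\mathfrak{gl}_n$-embedding $V_\lambda \hookrightarrow M^-$ automatically $\mathfrak{q}$-equivariant; Frobenius reciprocity then yields a $W_n$-map $\mathrm{Ind}_\mathfrak{q}^{W_n} V_\lambda \to M$. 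Applied instead to the canonical inclusion $V_\lambda \hookrightarrow \widetilde{\mathcal{F}}_\lambda$, the same reciprocity produces a surjection $\mathrm{Ind}_\mathfrak{q}^{W_n} V_\lambda \twoheadrightarrow \widetilde{\mathcal{F}}_\lambda$. I expect the main obstacle to be showing that the map into $M$ factors through this surjection: one must verify that the relations defining $\widetilde{\mathcal{F}}_\lambda$ as a quotient of the induced module are automatically satisfied in any target with our weight-space hypotheses. This factorisation is the technical heart of \cite{ANR} and rests on a weight-counting argument exploiting the semisimplicity of the $x_i \partial_i$-action together with the structure of $W_n^{(1)}$.

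Granting this, the filtration is assembled by induction. Pick a summand $V_{\lambda_1} \subset M^-$ realising the minimum total degree in $M^-$, and let $N_1 \subset M$ be the image of the resulting $W_n$-map $\widetilde{\mathcal{F}}_{\lambda_1} \to M$. By the theorem cited from \cite{ANR} earlier in the paper, $\widetilde{\mathcal{F}}_\lambda$ has finite length with composition factors among the $\mathcal{F}_\mu$'s (it is irreducible unless $\lambda = (1^k, 0^{n-k})$ for $1 \leq k \leq n-1$, in which case the irreducible submodule $\mathcal{F}_\lambda$ of closed forms gives a short filtration), so $N_1$ admits a finite $\mathcal{F}_\mu$-composition series. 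Pass to $M/N_1$, which inherits all the hypotheses, and iterate. Each $\mathcal{F}_\lambda$ contributes multiplicity bounded by $m_\lambda$ up to corrections from exact-form factors, hence is finite.
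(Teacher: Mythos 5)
The paper does not prove this theorem: it is simply quoted from Rudakov \cite{ANR}, so there is no ``paper proof'' to compare against. Your sketch is therefore evaluated on its own merits, and the overall architecture (use $M^-=\bigcap_i\ker\partial_i$, construct maps from Verma-type induced modules, extract composition factors) is indeed the right shape. But the step you flag as the ``technical heart'' is stated in a form that is not correct and is also stronger than what is needed.

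Two specific problems. First, the map $\mathrm{Ind}_{\mathfrak{q}}^{W_n}V_\lambda\to\widetilde{\mathcal{F}}_\lambda$ is \emph{not} surjective when $\lambda=(1^k,0^{n-k})$ with $1\le k\le n-1$: the vacuum vector lands in the closed forms $\mathcal{F}_\lambda\subsetneq\Omega^k=\widetilde{\mathcal{F}}_\lambda$, so the image is exactly $\mathcal{F}_\lambda$ and nothing more. Second, and more seriously, the claim that every $W_n$-map $\mathrm{Ind}_{\mathfrak{q}}^{W_n}V_\lambda\to M$ (for $M$ satisfying the hypotheses) factors through $\widetilde{\mathcal{F}}_\lambda$ is not justified and does not follow from the semisimplicity/nonnegativity hypotheses alone. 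One would need to know that the kernel of $\mathrm{Ind}_{\mathfrak{q}}^{W_n}V_\lambda\to\mathcal{F}_\lambda$ is generated by weight vectors with a negative coordinate; this is a genuine structural fact about $W_n$-Vermas, and simply asserting it as ``a weight-counting argument'' leaves the actual content of \cite{ANR} untouched. (For $n=1$ the induced module already has all-nonnegative weights, so this mechanism visibly cannot be what forces the factorization in general.)

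The good news is that you do not need the factorization claim at all. Since $\mathrm{Ind}_{\mathfrak{q}}^{W_n}V_\lambda\cong U(W_n^{\ge1})\otimes V_\lambda$ is $\mathbb{Z}_{\ge0}$-graded and generated by its bottom piece $V_\lambda$, which is $\mathfrak{gl}_n$-irreducible, every proper submodule meets the bottom piece trivially, so the sum of all proper submodules is proper: the Verma has a \emph{unique} irreducible quotient. You already know from the preceding theorem of \cite{ANR} that $\mathcal{F}_\lambda$ is irreducible, and the argument above shows it is a quotient of the Verma; hence it is \emph{the} irreducible quotient. Now if $L$ is any irreducible subquotient of $M$, it inherits the weight hypotheses, its lowest graded piece is an irreducible $\mathfrak{gl}_n$-module $V_\lambda$ annihilated by the $\partial_i$, and Frobenius reciprocity makes $L$ a quotient of $\mathrm{Ind}_{\mathfrak{q}}^{W_n}V_\lambda$; thus $L\cong\mathcal{F}_\lambda$. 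This replaces your factorization step with something that is both weaker and provable from the material already cited, and it cleanly isolates what really has to be imported from \cite{ANR} (namely the irreducibility statement). Your closing paragraph on the iteration and on finiteness of multiplicities is reasonable but should also be made precise: existence of a (possibly countable) Jordan--H\"older series and finiteness of each multiplicity follow from the finite-dimensionality of the weight spaces $M[\alpha]$ together with the fact that $\mathcal{F}_\lambda$ is concentrated in total degree $\ge|\lambda|$, rather than from the particular iterative recipe.
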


As a trivial, but important, consequence of the definition of $\mathcal{F}_\lambda$, we have:

\begin{prop}
Suppose $\lambda\neq (1^k,0^{n-k})$ for $1\leq k \leq n-1$.  Let $h_{V_\lambda}$ denote the Hilbert series for $V_\lambda$.  Then we have
$$h_{\mathcal{F}_\lambda}(t_1,\ldots, t_n) = \frac{h_{V_\lambda}(t_1,\ldots,t_n)}{(1-t_1)\cdots(1-t_n)},$$
\end{prop}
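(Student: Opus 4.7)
The proof is essentially a bookkeeping exercise once the gradings are set up correctly. The plan is to use the hypothesis $\lambda \neq (1^k, 0^{n-k})$ for $1 \leq k \leq n-1$ to replace $\mathcal{F}_\lambda$ by the full tensor field module $\widetilde{\mathcal{F}}_\lambda$, and then to factor the Hilbert series along the tensor product $\widetilde{\mathcal{F}}_\lambda = \mathbb{C}[x_1,\ldots,x_n] \otimes V_\lambda$.

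First, by the classification theorem of \cite{ANR} recalled above, the hypothesis on $\lambda$ ensures that $\widetilde{\mathcal{F}}_\lambda$ is itself irreducible, so $\mathcal{F}_\lambda = \widetilde{\mathcal{F}}_\lambda$ and it suffices to compute the Hilbert series of $\widetilde{\mathcal{F}}_\lambda$. Next, I would verify that the natural $\ZZ_{\geq 0}^n$-grading on $\widetilde{\mathcal{F}}_\lambda$, given by the common eigenspaces of the semisimple operators $x_1\partial_1,\ldots,x_n\partial_n$, coincides with the tensor product of the polynomial grading on $\CC[x_1,\ldots,x_n]$ (with $\deg x_i = e_i$) and the weight grading on $V_\lambda$ coming from the diagonal Cartan of $\mathfrak{gl}_n$. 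This is immediate from the Lie derivative formula: on a weight vector $f \otimes v$ with $f$ a monomial of multidegree $(a_1,\ldots,a_n)$ and $v$ of $\mathfrak{gl}_n$-weight $\mu = (\mu_1,\ldots,\mu_n)$, the embedding $\mathfrak{gl}_n \hookrightarrow W_n$ (sending $E_{ii}$ to $x_i\partial_i$) forces $x_i\partial_i$ to act by the scalar $a_i + \mu_i$.

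Since the grading on the tensor product is the sum of the gradings on the factors, the Hilbert series factors as a product:
$$h_{\widetilde{\mathcal{F}}_\lambda}(t_1,\ldots,t_n) = h_{\CC[x_1,\ldots,x_n]}(t_1,\ldots,t_n)\cdot h_{V_\lambda}(t_1,\ldots,t_n).$$
Combining this with the standard identity $h_{\CC[x_1,\ldots,x_n]}(t_1,\ldots,t_n) = \prod_{i=1}^n (1-t_i)^{-1}$ yields the claimed formula.

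There is no serious obstacle; the only subtle point is checking that the $\mathfrak{gl}_n$-weight grading on $V_\lambda$ used in the definition of $h_{V_\lambda}$ is consistent with the grading induced by the $x_i\partial_i$, which is what the Lie derivative calculation above confirms. The excluded cases $\lambda = (1^k,0^{n-k})$ are precisely those where passing from $\widetilde{\mathcal{F}}_\lambda$ to its irreducible submodule $\mathcal{F}_\lambda$ (the closed $k$-forms) shifts the Hilbert series, which explains why they must be excluded.
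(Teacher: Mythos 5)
Your argument is correct and is essentially the one the paper has in mind: the paper labels this a ``trivial consequence of the definition of $\mathcal{F}_\lambda$,'' and the content is exactly the observation you spell out, that the hypothesis forces $\mathcal{F}_\lambda=\widetilde{\mathcal{F}}_\lambda=\CC[x_1,\ldots,x_n]\otimes V_\lambda$ as a $\ZZ^n$-graded vector space, after which the Hilbert series factors. Your extra check that the $x_i\partial_i$-eigenspace grading agrees with the product of the monomial grading and the $\mathfrak{gl}_n$-weight grading is a sensible thing to verify and is the only point where anything could conceivably go wrong; it does not. One tiny quibble: for $\lambda=(0,\ldots,0)$ the module $\widetilde{\mathcal{F}}_\lambda=\CC[x_1,\ldots,x_n]$ is not irreducible (the constants are a submodule), so the justification ``$\widetilde{\mathcal{F}}_\lambda$ is irreducible, hence $\mathcal{F}_\lambda=\widetilde{\mathcal{F}}_\lambda$'' does not apply there; but the paper's convention defines $\mathcal{F}_\lambda=\widetilde{\mathcal{F}}_\lambda$ in that case anyway, so the conclusion still holds and this is cosmetic.
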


\noindent where we view $V_\lambda$ as a $\mathbb{Z}^n$-graded vector space via its weight decomposition. In particular, we observe that $h_{\mathcal{F}_\lambda}(t_1,\ldots,t_n)(1-t_1)\cdots(1-t_n)$ is a polynomial of total degree $|\lambda|$.

Let $\Omega^{ev}$ be the space of polynomial differential forms on $\mathbb{C}^n$ of even rank, and $\Omega^{ev}_{ex}$ its subspace of even exact forms. These spaces are graded by setting deg$(x_i)=$deg$(dx_i)=~1$. Recall that $\Omega^{ev}$ is a \emph{commutative} algebra with respect to the wedge product.
\begin{defn}
The multiplication $a*b=a\wedge b+da\wedge db$ defines an associative product on $\Omega^{ev}$. By $\Omega^{ev}_*$ we will mean the space $\Omega^{ev}$ equipped with the $*$-product, and call it the quantized algebra of even differential forms. 
\end{defn}

\begin{defn}
Let $Z$ denote the image of $A[A,[A,A]]$ in $B_1$, which was shown in \cite{FS} to be central in $B$.  We define $\bar{B}_{1}$ to be the quotient, $\bar{B}_{1}=B_{1}/Z,$ and define $\bar{B}=\bar{B}_1~\oplus~(\displaystyle\oplus_{i \geq 2} B_i)$. \end{defn}
Clearly $\bar{B}$ inherits the grading from $B$, and is thus a graded Lie algebra generated in degree 1.

\begin{thm}\label{isom}\cite{FS}
There is a unique isomorphism of algebras,
\begin{align*}
\xi: \Omega^{ev}_* &\to A/A[A,[A,A]],\\
x_i &\mapsto x_i,
\end{align*}
which restricts to an isomorphism $\xi:\Omega_{*,ex}^{ev}\overset\sim\to B_2$, and descends to an isomorphism $\xi: \Omega_*^{ev}/\Omega^{ev}_{*,ex}\overset\sim\to\bar{B}_1$.
\end{thm}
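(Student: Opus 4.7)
My plan is to build $\xi$ via the universal property of the free algebra, verify that it annihilates $A[A,[A,A]]$ by a short computation in $\Omega^{ev}_*$, and then upgrade the resulting surjection to an isomorphism by comparing multigraded Hilbert series. The two restricted statements will then follow almost formally.

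By freeness, the assignment $x_i \mapsto x_i \in \Omega^{ev}_*$ extends uniquely to an algebra map $\tilde\xi : A \to \Omega^{ev}_*$, and the uniqueness claim in the theorem is immediate from this, because $\Omega^{ev}_*$ is generated by the $x_i$ under $*$. Surjectivity of $\tilde\xi$ is a quick induction: $x_i * x_j - x_j * x_i = 2\, dx_i \wedge dx_j$, so every bivector $dx_i \wedge dx_j$ is hit, and then $x_k * (dx_i \wedge dx_j) = x_k \wedge dx_i \wedge dx_j$ (using $d(dx_i\wedge dx_j) = 0$), so by induction on form-rank and polynomial degree every element of $\Omega^{ev}$ is reached.

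The main input is the commutator identity $[a,b]_* = a*b - b*a = 2\, da \wedge db$, valid for all $a,b \in \Omega^{ev}$ since even-rank forms commute under $\wedge$. Iterating gives $[c,[a,b]_*]_* = 4\, dc \wedge d(da \wedge db) = 0$, so every triple $*$-commutator vanishes; thus $\tilde\xi([A,[A,A]]) = 0$ and, since $\tilde\xi$ is an algebra homomorphism, $\tilde\xi(A[A,[A,A]]) = 0$ as well. A short Jacobi calculation shows that $\mathrm{ad}(A)$ preserves $[A,[A,A]]$, so $A[A,[A,A]]$ is a genuine two-sided ideal and we obtain a surjection of graded algebras $\xi : A/A[A,[A,A]] \to \Omega^{ev}_*$. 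Injectivity I would establish by comparing multigraded Hilbert series: that of $\Omega^{ev}_*$ is an explicit rational function in $t_1,\ldots,t_n$, while an upper bound for the series of $A/A[A,[A,A]]$ comes from a PBW-style normal form for words in $x_1,\ldots,x_n$ modulo the relations implied by the vanishing of triple commutators. Equality of the two series forces $\xi$ to be an isomorphism.

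For the restricted statements, the formula $[a,b]_* = 2\, da \wedge db$ shows $\tilde\xi([A,A]) \subseteq \Omega^{ev}_{*,ex}$, and conversely every even exact form is itself a single $*$-commutator, so $\xi$ carries the image of $[A,A]$ in $A/A[A,[A,A]]$ onto $\Omega^{ev}_{*,ex}$; identifying that image with $B_2 = [A,A]/[A,[A,A]]$ amounts to the short check that $[A,A]\cap A[A,[A,A]] = [A,[A,A]]$. Quotienting by exact forms on one side and by brackets on the other then produces $\bar B_1 \cong \Omega^{ev}_*/\Omega^{ev}_{*,ex}$. The main obstacle I foresee is the Hilbert series upper bound for $A/A[A,[A,A]]$: controlling a PBW-type normal form modulo the triple-commutator relations is delicate, and this is where most of the combinatorial content of \cite{FS} would live.
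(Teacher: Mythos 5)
The paper does not prove this theorem — it is imported verbatim from Feigin--Shoikhet \cite{FS} in the Preliminaries section — so your proposal can only be assessed on its own terms.

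Your preliminary computations are correct: $[a,b]_* = a*b - b*a = 2\,da\wedge db$ for even forms $a,b$ (even forms commute and odd forms anticommute under $\wedge$), hence $[c,[a,b]_*]_* = 2\,dc\wedge d(2\,da\wedge db) = 0$, so $\tilde\xi$ does kill $[A,[A,A]]$ and therefore the two-sided ideal $A[A,[A,A]]$; and the rank-filtration induction for surjectivity is workable. However, there are two genuine gaps. First, you explicitly defer the injectivity of the induced map $A/A[A,[A,A]] \to \Omega^{ev}_*$ to a Hilbert-series comparison that you do not carry out, and you correctly observe that a PBW-type normal form for $A$ modulo the relations would be needed. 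But that normal form \emph{is} the content of the theorem: without it you have only a surjection, not an isomorphism. So the main claim is left unproved.

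Second, the deduction of the restricted isomorphism $\Omega^{ev}_{*,ex}\xrightarrow{\sim} B_2$ rests on what you call the ``short check'' that $[A,A]\cap A[A,[A,A]] = [A,[A,A]]$. This is not short; it is essentially equivalent to the statement being proved. Granting the main isomorphism, a dimension count shows $\dim\bigl([A,A]\cap A[A,[A,A]]\bigr)[d] = \dim[A,A][d] - \dim\Omega^{ev}_{ex}[d]$, while $\dim[A,[A,A]][d] = \dim[A,A][d] - \dim B_2[d]$; so the equality of these two subspaces is exactly the assertion $\dim B_2[d] = \dim\Omega^{ev}_{ex}[d]$, i.e.\ $B_2\cong\Omega^{ev}_{ex}$ as graded spaces — one of the two things to be proved. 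An independent argument (which \cite{FS} supplies) is required here, and your write-up does not provide one.
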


\begin{thm}\cite{FS}
 The action of $W_n$ on $\bar{B}_1 \cong \Omega^{ev}/\Omega^{ev}_{ex}$ by Lie derivatives uniquely extends to an action of $W_n$ on $\bar{B}$ by grading-preserving derivations.\end{thm}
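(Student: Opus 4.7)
Uniqueness is immediate from the statement, already recorded in the paper, that $\bar{B}$ is generated as a graded Lie algebra by its degree-one part. Any grading-preserving derivation $D$ of $\bar{B}$ is therefore determined by $D|_{\bar{B}_1}$: for an iterated bracket $\omega = [a_1,[a_2,[\ldots,[a_{m-1},a_m]\ldots]]]$ of elements $a_i \in \bar{B}_1$, the Leibniz rule forces $D\omega$ to be the sum, over $j=1,\ldots,m$, of the same bracket with $a_j$ replaced by $D a_j$, and every element of $\bar{B}_m$ ($m \geq 2$) is a linear combination of such iterated brackets.

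For existence, I plan to work via the free Lie algebra. Let $L = \mathrm{FL}(\bar{B}_1)$ denote the free graded Lie algebra on $\bar{B}_1$; by its universal property, the Lie-derivative action of $W_n$ on $\bar{B}_1$ extends uniquely to an action on $L$ by graded derivations. There is a canonical surjection $\pi\colon L \twoheadrightarrow \bar{B}$, and the task reduces to showing that its kernel $K$ — a graded Lie ideal encoding all relations among iterated brackets of degree-one elements — is stable under $W_n$. For the relations in degree two, Theorem \ref{isom} suffices: since the Lie derivative commutes with both $d$ and $\wedge$, it is an algebra derivation of the $*$-product on $\Omega^{ev}_*$, and hence preserves the commutator bracket, the lower central series of $A/A[A,[A,A]]$, and in particular the degree-two quotient $\Omega^{ev}_{*,ex} \cong B_2$.

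The main obstacle concerns the relations of $\bar{B}$ in degrees $m \geq 3$, since the lower central series of $\Omega^{ev}_*$ itself truncates ($L_3 = 0$ there) and so cannot directly witness higher-degree compatibility. I would attack this by induction on $m$: granted consistent $W_n$-actions on $\bar{B}_1, B_2, \ldots, B_{m-1}$, any $c \in B_m$ can be written as $c = \sum_i [a_i, b_i]$ with $a_i \in \bar{B}_1$ and $b_i \in B_{m-1}$, and the candidate formula $v \cdot c := \sum_i \bigl([va_i, b_i] + [a_i, vb_i]\bigr)$ must be shown to be independent of the presentation. Equivalently, the bracket map $\bar{B}_1 \otimes B_{m-1} \to B_m$ must be $W_n$-equivariant. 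Establishing this is the technical heart of the argument: I would approach it by working inside the ambient algebra $A$, constructing a (noncanonical) symmetric lift of each $v \in W_n$ to a linear operator on $A$ and verifying that the ambiguity in the lift lies in $L_{m+1}(A)$, hence vanishes upon passage to $B_m$.
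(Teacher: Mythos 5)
This statement is cited from \cite{FS}; the present paper gives no proof of its own. On its merits, your uniqueness argument is complete and correct, and your treatment of the degree-two relations (Lie derivatives commute with $d$ and $\wedge$, hence are derivations of the $*$-product) is sound. The existence argument, however, has a genuine gap that you yourself flag: for $m\geq 3$ you reduce to showing that the kernel of the free-Lie-algebra surjection is $W_n$-stable, state that this is ``the technical heart of the argument,'' and then offer only a sketch (a ``symmetric lift'' of $v\in W_n$ to a linear operator on $A$ with ambiguity in $L_{m+1}$) without producing such a lift or verifying the claimed property. As written, this is a plan, not a proof; the inductive step is precisely what needs to be established and it is left open.

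The route taken in \cite{FS} sidesteps the degree-by-degree difficulty entirely. Theorem \ref{isom} gives an algebra isomorphism $\xi\colon \Omega^{ev}_*\overset\sim\to A/A[A,[A,A]]$, and the Lie derivative action of $W_n$ on $\Omega^{ev}_*$ is by algebra derivations of the $*$-product (the computation you already did). An algebra derivation automatically preserves the entire lower central series of $A/A[A,[A,A]]$ and therefore acts compatibly on all its graded quotients. The remaining input is the [FS] structural fact that modding $A$ by $A[A,[A,A]]$ does not change $B_m$ for $m\geq 2$ (and replaces $B_1$ by $\bar B_1$) — equivalently, the identification of $\bar B$ with $B\bigl(A/A[A,[A,A]]\bigr)$. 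With that, the $W_n$-action on $\bar B$ drops out at once, uniformly in $m$, with no induction and no need to analyze the relation ideal in a free Lie algebra. Your degree-two observation is in fact the seed of this argument; the missing move is to apply it once at the level of the \emph{algebra} $A/A[A,[A,A]]$ rather than degree by degree at the level of the Lie algebra $\bar B$.
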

 Thus, for each $m\geq2$, $B_{m}$ is a $W_n$-module clearly satisfying the conditions of Theorem \ref{semisimple}, and thus has composition factors $\mathcal{F}_{\lambda}$, each occurring with finite multiplicity. 

\section{Proof of Theorem \ref{xyxy} and Corollary \ref{rank}}\label{xyxypf}
 
\begin{lem}\label{identity} We have the following identity, which may be directly checked.
\begin{align*}{[u^{3},[v,w]]}=&{3[u^{2},[uv,w]]}-{3[u,[u^{2}v,w]]}
+\frac{3}{2}[u^2,[v,[u,w]]]\\&- \frac{3}{2}[u,[v,[u^{2},w]]]+[u,[u,[u,[v,w]]]]\\&
-\frac{3}{2}[u,[u,[v,[u,w]]]]+\frac{3}{2}[u,[v,[u,[u,w]]]].\end{align*}
\end{lem}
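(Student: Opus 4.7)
The plan is to verify the identity by direct computation in the free associative algebra $\mathbb{C}\langle u, v, w\rangle$. After fully expanding every nested commutator via $[a,b] = ab - ba$, each side becomes a $\mathbb{Q}$-linear combination of the $20$ monomials of the form $u^a v u^b w u^c$ or $u^a w u^b v u^c$ with $a+b+c = 3$, so the identity reduces to a finite comparison of rational coefficients.

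The LHS expands immediately to $[u^3,[v,w]] = u^3 vw - u^3 wv - vwu^3 + wvu^3$, contributing only to the four ``extremal'' monomials with coefficients $\pm 1$. For the RHS, each of the seven terms is expanded one layer at a time, working outward from the innermost bracket. The only closed-form identity required is
$$\mathrm{ad}_u^k(x) = \sum_{j=0}^{k}(-1)^{k-j}\binom{k}{j}\,u^j x u^{k-j},$$
which handles the $[u,\cdot]$-towers, combined with direct expansion of the mixed outer operators $[u^2,\cdot]$, $[uv,\cdot]$, and $[u^2 v,\cdot]$.

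The tabulation is then routine: the rational coefficients $\pm 3$ and $\pm \tfrac{3}{2}$ on the RHS are calibrated precisely so that the $16$ monomials not appearing on the LHS cancel, and the remaining four extremal monomials match with the correct signs. The main obstacle is purely bookkeeping; no conceptual step beyond systematic expansion is needed. The usefulness of the identity for Theorem \ref{xyxy} is apparent from its shape: it rewrites $[u^3,[v,w]]$ as a combination of commutators whose outer argument has $u$-degree at most $3$ (either a pure power of $u$, or of the form $u^k v$) and whose inner argument is a nested bracket that, in the application, will lie in $B_m$.
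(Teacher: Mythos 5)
Your approach — fully expanding all nested commutators in the free algebra $\mathbb{C}\langle u,v,w\rangle$ into the $20$ degree-$(3,1,1)$ monomials and comparing rational coefficients — is exactly what the paper means by the identity "may be directly checked," and your use of the closed form for $\mathrm{ad}_u^k$ is a sound bookkeeping aid. This matches the paper's (implicit) proof; nothing more to add.
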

\begin{cor}\label{symm} Let $S(a,b,c)$ be the symmetrized sum  $$S(a,b,c)=\frac16(abc+bca+cab+acb+cba+bac).$$ Then, for all $a,b,c$, we have that $$[S(a,b,c),B_m] \subset [ab,B_m]+[bc,B_m]+[ca,B_m]+[a,B_m]+[b,B_m]+[c,B_m]\subset B_{m+1}.$$
\end{cor}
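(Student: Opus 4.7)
The plan is to combine Lemma~\ref{identity} with the noncommutative polarization identity
\[ 6S(a,b,c) = (a+b+c)^3 - (a+b)^3 - (b+c)^3 - (a+c)^3 + a^3 + b^3 + c^3, \]
verified by direct expansion. Since $L_m=[A,L_{m-1}]$ for $m\geq 2$, every element of $L_m$ is a sum of commutators $[v,w]$ with $w\in L_{m-1}$, so by linearity it suffices to prove the containment when the representative of $r\in B_m$ has this form.

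The next step is to observe that, modulo $L_{m+2}$, five of the seven summands on the right-hand side of Lemma~\ref{identity} drop out: each of them nests $w\in L_{m-1}$ under at least two further Lie brackets. For instance, in $[u^2,[v,[u,w]]]$ one has $[u,w]\in L_m$, hence $[v,[u,w]]\in L_{m+1}$ and therefore $[u^2,[v,[u,w]]]\in L_{m+2}$; an identical count disposes of the three quadruply-nested terms and of $[u,[v,[u^2,w]]]$. What remains is
\[ [u^3,[v,w]] \equiv 3[u^2,[uv,w]] - 3[u,[u^2v,w]] \pmod{L_{m+2}}. \]

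Applying this congruence with each $u\in\{a+b+c,\,a+b,\,b+c,\,a+c,\,a,\,b,\,c\}$ and combining with signs $(+,-,-,-,+,+,+)$, the left-hand side becomes $[6S(a,b,c),[v,w]]$ via the polarization identity, while the right-hand side reduces to the multilinear-in-$(a,b,c)$ part of its two terms. A direct expansion of the cross terms in $u^2$ and $uv$ (respectively $u$ and $u^2v$) shows this multilinear part equals
\[ 3\bigl([ab{+}ba,[cv,w]] + [ac{+}ca,[bv,w]] + [bc{+}cb,[av,w]]\bigr) - 3\bigl([a,[(bc{+}cb)v,w]] + [b,[(ac{+}ca)v,w]] + [c,[(ab{+}ba)v,w]]\bigr). \]
Finally, because $ab-ba=[a,b]\in L_2$ and $[L_2,L_m]\subset L_{m+2}$, we have $[ab+ba,X]\equiv 2[ab,X]\pmod{L_{m+2}}$ for any $X\in L_m$. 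Each surviving summand thus lies in one of $[ab,B_m]$, $[ac,B_m]$, $[bc,B_m]$, $[a,B_m]$, $[b,B_m]$, $[c,B_m]$; dividing by $6$ completes the argument.

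The main obstacle is bookkeeping: confirming that the five ``garbage'' summands of Lemma~\ref{identity} genuinely lie in $L_{m+2}$, and extracting the multilinear-in-$(a,b,c)$ part cleanly without losing or double-counting cross terms. Both become routine once one fixes the polarization substitution $u=ta+sb+rc$ and reads off the coefficient of $tsr$.
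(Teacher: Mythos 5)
Your proof is correct and matches the paper's approach: both extract the multilinear (in $a,b,c$) part of Lemma~\ref{identity}, observing that the five doubly-or-more-nested summands fall into $L_{m+2}$ and so vanish in $B_{m+1}$. The paper achieves the polarization by substituting $u=t_1a+t_2b+t_3c$ and reading off the $t_1t_2t_3$ coefficient, whereas you use the equivalent inclusion-exclusion identity for $6S(a,b,c)$; this is only a cosmetic difference.
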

\begin{proof}
 In Lemma \ref{identity}, set $u=t_1a+t_2b+t_3c$, $v$ equal to any element of $A_n$, $w$ equal to any element of $B_{m-1}$, and take the coefficient of $t_1t_2t_3$. The result follows.
\end{proof}
\begin{lem}\label{lemmae}
 Let $E\subset \Omega^{ev}_*$ be the span of $S(a,b,c)$, where $a,b,$ and $c$ are of positive degree \emph{(}$\deg(x_i)=\deg(dx_i)=1$\emph{)}.   Let $X$ be the span of 1, $x_i$, $x_ix_j$ for $i\leq j$, $x_i dx_j \wedge dx_k$ for $i<j<k$. Then $\Omega^{ev}_*=X+E+\Omega^{ev}_{ex,*}$.
\end{lem}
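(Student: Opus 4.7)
The plan is to reduce the lemma to a statement in the commutative wedge algebra by showing that $S(a,b,c) \equiv a \wedge b \wedge c \pmod{\Omega^{ev}_{ex,*}}$, and then to verify the resulting decomposition via a case analysis on form-degree.

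\emph{Step 1 (reduction to commutative algebra).} Expanding $(a*b)*c$ via $\omega*\eta = \omega\wedge\eta + d\omega \wedge d\eta$ and using $d^2=0$, one computes directly that
\[
a*b*c \;=\; a\wedge b\wedge c \;+\; d(a\wedge b\wedge dc) \;+\; da\wedge db\wedge c.
\]
The middle term is exact. For the last term, since $da$ and $db$ have odd form-degree, swapping $a$ and $b$ reverses its sign. Summing over the six orderings in the definition of $S(a,b,c)$, the quantum corrections cancel pairwise (for each choice of the rightmost factor, the two orderings of the remaining pair are negatives of one another), while the exact contributions assemble into an exact form. Hence
\[
S(a,b,c) \;\equiv\; a\wedge b\wedge c \pmod{\Omega^{ev}_{ex,*}},
\]
and the lemma reduces to showing that every element of $\Omega^{ev}$ lies in $X$, plus the span of triple wedge products of positive-degree even forms, plus an exact form.

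\emph{Step 2 (case analysis).} We verify this on a basis element $\omega = x^\alpha\, dx_{i_1}\wedge\cdots\wedge dx_{i_{2k}}$ with $i_1<\cdots<i_{2k}$. In form-degree $0$, monomials of total degree $\leq 2$ lie in $X$, while those of degree $\geq 3$ factor as $x^{\beta_1}x^{\beta_2}x^{\beta_3}$ with each $|\beta_j|\geq 1$ and are thus triple wedge products. In form-degree $2$: if $|\alpha|=0$, then $dx_i\wedge dx_j = d(x_i\,dx_j)$ is exact; if $|\alpha|=1$, the Leibniz identity $d(x_px_q\,dx_r) = x_q\,dx_p\wedge dx_r + x_p\,dx_q\wedge dx_r$ expresses every $x_l\,dx_i\wedge dx_j$ as an element of $X$ modulo exact forms; and if $|\alpha|\geq 2$, splitting $x^\alpha = x^\beta x^\gamma$ exhibits $\omega$ as $x^\beta\wedge x^\gamma\wedge(dx_i\wedge dx_j)$, a triple wedge. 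In form-degree $2k\geq 4$: the pure $dx$-monomial $dx_{i_1}\wedge\cdots\wedge dx_{i_4} = d(x_{i_1}\,dx_{i_2}\wedge dx_{i_3}\wedge dx_{i_4})$ is exact; in every remaining case (either $|\alpha|\geq 1$ or $2k\geq 6$), one can partition the factors of $\omega$ into three positive-degree even blocks --- taking $x^\alpha$ as one block when $|\alpha|\geq 1$, otherwise grouping the $dx$'s into at least three disjoint even pieces --- yielding a presentation of $\omega$ as a triple wedge.

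\emph{Main obstacle.} The crux of the argument is Step 1: producing the explicit formula for $a*b*c$ and recognizing that its "quantum correction" term $da\wedge db\wedge c$ is antisymmetric under the transposition of $a$ and $b$, so that it disappears from the symmetrization. Once that reduction to the commutative wedge algebra is in place, Step 2 is a routine exterior-algebra case check.
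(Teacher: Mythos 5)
Your proof is correct, and the overall strategy (reduce to the commutative wedge algebra, then verify the spanning statement there by inspection of monomials) matches the paper. The genuine difference is in how the reduction is achieved. The paper argues abstractly that the rank-of-forms filtration on $\Omega^{ev}_*$ has commutative associated graded, so it suffices to prove the statement in the associated graded. Your Step 1 instead establishes the explicit identity $S(a,b,c)\equiv a\wedge b\wedge c \pmod{\Omega^{ev}_{ex}}$ by expanding $a*b*c = a\wedge b\wedge c + d(a\wedge b\wedge dc) + da\wedge db\wedge c$ and noting that the term $da\wedge db\wedge c$ is antisymmetric in $a,b$ and so cancels under symmetrization, while the remaining discrepancy is exact. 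This is sharper than the paper's filtration argument: it shows the $*$-symmetrization $E$ and the commutative triple-product span agree exactly modulo $\Omega^{ev}_{ex}$, with no higher-rank error to iterate away, and it bypasses the (routine but implicit) passage from a spanning statement in the associated graded back to the filtered object. Your Step 2 and the paper's explicit listing of the spanning set of $\Omega^{ev}/E$ and identification of the exact generators are essentially the same computation.
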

\begin{proof} $\Omega^{ev}_*$ has a finite length descending filtration by rank of forms, such that the associated graded is the usual commutative algebra of even forms. Therefore, it is sufficient to check the same statement for the commutative algebra of even forms. Then $\Omega^{ev}/E$ is spanned by $$1, ~x_i, ~dx_i \wedge dx_j, ~x_ix_j, ~x_idx_j\wedge dx_k, ~dx_i\wedge dx_j \wedge dx_k \wedge dx_l.$$ As the forms $$dx_i\wedge dx_j, ~{x_idx_i\wedge dx_j}, ~dx_i\wedge 
dx_j\wedge dx_k\wedge dx_l$$ are exact, the Lemma follows. 
\end{proof}

Now we proceed to prove the theorem.  (1) Follows from the isomorphism $B_2\cong\Omega^{ev}_{ex}$.  For (2), we have:  $B_{m+1}=[\xi(\Omega^{ev}),B_m]$. Now, by Lemma \ref{lemmae} and Corollary \ref{symm}, $[\xi(\Omega^{ev}),B_m]=[\xi(X),B_m]$, so the statement follows.

For (4), we observe the following identity, which may be checked directly:
\begin{lem}\label{second}\begin{align*}
{[x^{2},[y,w]]}=&{2[x,[xy,w]]}+{[y,[x^2,w]]}-{2[xy,[x,w]]}-[w,[x,[y,x]]].\\
6[x^2,[xy,w]]=&12[x,[x^2y,w]] + 4[y,[x^3,w]] -6[xy,[x^2,w]]\\
& -3[x^2,[y,[x,w]]] - 3[y,[x^2,[x,w]]] + 9[x,[y,[x^2,w]]]\\
& -3[x,[x,[x,[y,w]]]] +3[x,[x,[y,[x,w]]]] -3[x,[y,[x,[x,w]]]]\\&-[y,[x,[x,[x,w]]]].
\end{align*}
\end{lem}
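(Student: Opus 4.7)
Both identities in Lemma \ref{second} are formal identities in the free associative algebra $\CC\langle x,y,w\rangle$, so they hold for any $x$, $y$ in $A$ and any $w$ (in our application, any $w\in B_{m-1}$). My plan is therefore to verify each by expanding both sides into a canonical form in the free algebra and comparing.

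For the first identity, the natural first step is the Jacobi decomposition
\[
[x^2,[y,w]] = [[x^2,y],w] + [y,[x^2,w]],
\]
which already matches one of the four terms on the right-hand side. The remaining commutator $[[x^2,y],w]$ can be expanded using the Leibniz rules $[x^2,y]=x[x,y]+[x,y]x$ and $[ab,c]=a[b,c]+[a,c]b$, and then reorganized to produce the three terms $2[x,[xy,w]]$, $-2[xy,[x,w]]$, and $-[w,[x,[y,x]]]$. Alternatively --- and this is the most direct route --- one observes that both sides lie in the twelve-dimensional subspace of $\CC\langle x,y,w\rangle$ spanned by the arrangements of two $x$'s, one $y$, and one $w$, and compares the coefficient of each monomial. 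Carrying this out confirms that every monomial matches.

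For the second identity, the same monomial comparison is the cleanest strategy, now carried out in the twenty-dimensional subspace of $\CC\langle x,y,w\rangle$ spanned by arrangements of three $x$'s, one $y$, and one $w$. The verification is entirely mechanical but has enough terms (eleven on the right-hand side, several of them quadruply nested) that a computer algebra system is the most reliable tool. One could also attempt to derive it structurally by substituting $y\mapsto xy$ in the first identity and repeatedly applying Jacobi and Leibniz to rewrite the resulting brackets, but the sign bookkeeping turns out to be about as heavy as a direct expansion.

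The main obstacle is purely this bookkeeping: a single sign error anywhere destroys the check, and there is no conceptual shortcut that reduces the computation substantially. As a sanity check one can confirm that both sides of each identity reduce correctly under degenerations such as $y\mapsto x$ (where the first identity, for instance, must collapse to the Jacobi identity $[x^2,[x,w]] = [x,[x^2,w]]$), and then perform the full expansion by two independent methods to cross-validate.
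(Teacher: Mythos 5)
Your approach is correct and matches the paper's, which simply asserts that the identity ``may be checked directly''—i.e., by expanding both sides into the free associative algebra $\CC\langle x,y,w\rangle$ and comparing monomial coefficients, exactly as you describe. The degeneration $y\mapsto x$ sanity check is a nice addition (it collapses the first identity, after cancelling $[x,[x,x]]=0$, to $3[x^2,[x,w]]=3[x,[x^2,w]]$, an instance of Jacobi), though it is not part of the paper's argument.
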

Setting $x=x_1$, $y=x_2$, and letting $w\in B_{m-2}$, we get that $[x_i^2,B_{m-1}]\subset [x_1,B_{m-1}]+[x_2,B_{m-1}]+[x_1x_2,B_{m-1}]$ as desired.

For (3), we use a lemma:

\begin{lem}\cite{DE} There exists a function $\epsilon:S_{m}\to\mathbb{Q}$, such that
$$[a_0,[a_{1},[\cdots [a_{m-1},a_m]\cdots] = \sum_{\sigma \in S_{m}}\epsilon(\sigma)[a_{\sigma(1)},[a_{\sigma(2)},[\cdots[a_{\sigma(m)},a_0]\cdots].$$
\end{lem}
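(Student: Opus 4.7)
The plan is to work in the free Lie algebra $\mathfrak{L}$ on the letters $a_0, a_1, \ldots, a_m$, prove the identity there, and then specialize to any Lie algebra by substitution. Both sides lie in the multilinear component $\mathfrak{L}^{\mathrm{mult}}_{m+1}$ of multidegree $(1,1,\ldots,1)$, and a classical computation (e.g.\ the exponential generating series $-\log(1-x)$ of the Lie operad) gives $\dim_{\mathbb{Q}}\mathfrak{L}^{\mathrm{mult}}_{m+1}=m!$.

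Granted this dimension, the identity reduces to showing that the $m!$ right-nested brackets
\[
B_\sigma := [a_{\sigma(1)},[a_{\sigma(2)},[\cdots[a_{\sigma(m)},a_0]\cdots]]],\qquad \sigma\in S_m,
\]
form a $\mathbb{Q}$-basis of $\mathfrak{L}^{\mathrm{mult}}_{m+1}$: the $\epsilon(\sigma)$ are then simply the (automatically rational) coordinates of the left-hand side in this basis. I would verify linear independence directly in the tensor algebra, using the standard expansion
\[
[x_1,[x_2,[\cdots[x_{n-1},x_n]\cdots]]] \;=\; \sum_{A\sqcup B=\{1,\ldots,n-1\}} (-1)^{|B|}\, x_{i_1}\cdots x_{i_{|A|}}\, x_n\, x_{j_{|B|}}\cdots x_{j_1}
\]
(with $i_1<\cdots<i_{|A|}$ in $A$ and $j_1<\cdots<j_{|B|}$ in $B$): in this expansion the associative monomial $a_{\sigma(1)}a_{\sigma(2)}\cdots a_{\sigma(m)}a_0$ arises in $B_\sigma$ with coefficient $+1$ (from the term $A=\{1,\ldots,m\}$) and in no other $B_\tau$ with $\tau\neq\sigma$ (since any appearance of $a_0$ in rightmost position forces $B=\emptyset$ and then the residual word must read off $\tau$ in the identity order). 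Pairing against the corresponding coefficient functionals separates the $B_\sigma$, giving linear independence; the dimension count upgrades independence to basis.

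A more computational alternative is a direct induction on $m$: applying the Jacobi identity $[a_0,[a_1,Y]]=[a_1,[a_0,Y]]+[[a_0,a_1],Y]$ peels off one layer, the inductive hypothesis immediately handles $[a_0,Y]$, and the residual term $[[a_0,a_1],Y]$ is processed by pushing the compound letter $[a_0,a_1]$ inward (via the inductive hypothesis applied to the $m-1$ letters $[a_0,a_1],a_2,\ldots,a_m$) and then re-expanding the innermost $[a_{\tau(m)},[a_0,a_1]]$ by Jacobi to recover a combination of $B_\sigma$'s. The main obstacle in either route is essentially the same: in the conceptual approach it is the dimension identity $\dim\mathfrak{L}^{\mathrm{mult}}_{m+1}=m!$, a nontrivial input from the theory of free Lie algebras; in the inductive approach it is the bookkeeping required to track the cascade of Jacobi terms and to verify that all the resulting coefficients stay in $\mathbb{Q}$.
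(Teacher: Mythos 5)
The paper attributes this lemma to \cite{DE} and does not reprove it, so there is no in-paper proof to compare with. Your basis argument is correct and complete; it is the standard ``elimination'' fact that the $m!$ right-nested brackets $B_\sigma$ with the letter $a_0$ fixed in the innermost slot form a basis of the multilinear component of the free Lie algebra on $a_0,\ldots,a_m$. The linear-independence step is sound: in the tensor-algebra expansion of $B_\tau$, the associative word $a_{\sigma(1)}\cdots a_{\sigma(m)}a_0$ ends in $a_0$, so it arises only from the $B=\emptyset$ summand, which forces $\tau=\sigma$ with coefficient $1$; and the dimension $m!$ follows from the multigraded Witt formula (itself a consequence of PBW), independently of any explicit basis, so there is no circularity. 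The $\epsilon(\sigma)$ are then the coordinates of the left-hand side in this basis, automatically in $\mathbb{Q}$.

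One remark on your alternative sketch: the inductive route is actually tighter than you suggest and does not cascade. After $[a_0,[a_1,Y]]=[a_1,[a_0,Y]]+[[a_0,a_1],Y]$, the first term is handled directly by the inductive hypothesis on $a_0,a_2,\ldots,a_m$. For the second, apply the inductive hypothesis to the $m-1$ letters $[a_0,a_1],a_2,\ldots,a_m$ to move the compound letter $[a_0,a_1]$ to the innermost slot; the resulting innermost bracket $[a_{\tau(m)},[a_0,a_1]]$ equals $-[a_{\tau(m)},[a_1,a_0]]$ by antisymmetry of the inner two-letter bracket alone, which is already right-nested with $a_0$ innermost. No further Jacobi expansion is needed, so the induction closes in one pass and, as a bonus, shows the $\epsilon(\sigma)$ are integers. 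Either route is a valid proof of the cited lemma.
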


Thus an element $x=[a_0,[a_{1},[\cdots [a_{m-1},a_m]\cdots]$, with $a_0\in\Omega^2$ is a sum of elements with $a_0$ in the innermost bracket.  The innermost bracket will then be in $\zeta(\Omega^4)$, which is zero in $B_2$ for $n=3$.
\qed

Corollary \ref{rank} follows immediately by counting the dimension of $X$.

\section{Proof of Theorem \ref{2m-3}}\label{2m-3pf}
Let $\xi: \Omega^{ev} \to \bar{B}_1$ be the Feigin-Shoikhet surjective map from Theorem \ref{isom}.
Consider the map $f_m: (\Omega^{ev})^{\otimes m}\to B_m$
given by

$$f_m(a_1,\ldots,a_m)=[\xi(a_1),[\xi(a_2),\ldots[\xi(a_{m-1}),\xi(a_m)]].$$

\noindent Since $\bar{B}$ is generated in degree 1, this map is surjective.  By Theorem \ref{xyxy}, the restriction of $f_m$ to $Y:=(\Omega^0)^{\otimes m}$
is surjective for $n=2,3$, while the restriction to $Y:=(\Omega^0+\Omega^2)^{m-2}\ot(\oplus_{j+k\leq \lfloor \frac{n-2}{2}\rfloor}\Omega^{2j}\ot\Omega^{2k})$ is surjective for $n\geq 4$.  The idea of the proof will be to find a large $W_n$-submodule $K$ of $Y$ such that $f_m|_K=0$ and, such that the composition factors $\mathcal{F}_{\lambda}$ of  $Y/K$ satisfy the bound on $|\lambda|$ given in the theorem. This will obviously imply Theorem \ref{2m-3}.
\begin{lem}\label{cor1}
 Let $K \subset Y$ be the submodule spanned by elements of the form: 
\begin{enumerate} \item{$p_1\otimes\cdots\otimes p_{m-3-i}\otimes(1\otimes b)*(a\otimes1-1\otimes a)^3\otimes p_{m-2-i}\otimes\cdots\otimes p_{m-2},$ for $0\leq i\leq m-3,$ where $a\in\Omega_*^{0}, b\in \Omega_*^{ev},$ and $p_i\in \Omega_*^{ev}$, \emph{and}}
\item{$p_1\otimes\cdots\otimes p_{m-2}\otimes(b_1\otimes b_2)*(a\otimes1-1\otimes a)^2,$ where $a\in\Omega_*^{0}, b_1,b_2\in \Omega_*^{ev},$ and $p_i\in \Omega_*^{ev}$.}
\end{enumerate}
Then $f_m|_K=0$.
\end{lem}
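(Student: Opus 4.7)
The plan is to expand each defining element of $K$ using the $*$-product on $\Omega^{ev}_*$, push through $f_m$ to get a concrete element of $A$, and verify it lies in $L_{m+1}$.

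For Type (2), the expansion
\[
(b_1\otimes b_2)*(a\otimes 1-1\otimes a)^2 = (b_1*a^2)\otimes b_2 - 2(b_1*a)\otimes(b_2*a) + b_1\otimes(b_2*a^2)
\]
placed at the last two slots yields
\[
P = [\xi(b_1*a^2),\xi(b_2)] - 2[\xi(b_1*a),\xi(b_2*a)] + [\xi(b_1),\xi(b_2*a^2)].
\]
Choose multiplicative lifts $\tilde\xi(b_i*a^k)=\beta_i\alpha^k$ in $A$, with $\alpha=\tilde\xi(a)$ and $\beta_i=\tilde\xi(b_i)$. A direct Leibniz computation collapses $P$ to
\[
\beta_1[\alpha,[\alpha,\beta_2]] - \beta_2[\alpha,[\alpha,\beta_1]].
\]
In this form $P$ visibly sits in the two-sided ideal $A[A,[A,A]]$, while as an alternating sum of single commutators it lies in $L_2$. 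The isomorphism $\xi\colon\Omega^{ev}_{*,ex}\overset{\sim}{\to}B_2$ of Theorem \ref{isom} forces $L_2\cap A[A,[A,A]]=L_3$, so in fact $P\in L_3$. Then $f_m$ of the Type (2) element is $[\xi(p_1),[\cdots[\xi(p_{m-2}),P]\cdots]]\in L_{(m-2)+3}=L_{m+1}$, hence zero in $B_m$.

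For Type (1), the same procedure at the two middle slots produces
\[
\Phi = \sum_{k=0}^{2}(-1)^k\binom{3}{k}[\xi(a^{3-k}),[\xi(a^k*b),Z]],
\]
where $Z = [\xi(p_{m-2-i}),[\cdots[\xi(p_{m-3}),\xi(p_{m-2})]\cdots]]\in L_{i+1}$ is the inner iterated commutator. With multiplicative lifts this reads $[u^3,[v,Z]]-3[u^2,[uv,Z]]+3[u,[u^2 v,Z]]$ for $u=\alpha,\ v=\beta$. Lemma \ref{identity} rewrites this as a sum of five iterated commutators, each of which wraps $Z$ in at least three further bracket layers, placing every summand --- and hence $\Phi$ --- in $L_{i+4}$. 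Applying the outer $m-3-i$ brackets $\xi(p_1),\ldots,\xi(p_{m-3-i})$ lands $\Phi$ in $L_{m+1}$.

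The main obstacle in both parts is controlling the lift ambiguity: replacing a lift $\tilde\xi(\omega)$ by $\tilde\xi(\omega)+\delta$ with $\delta\in A[A,[A,A]]$ alters the iterated bracket by a term involving $[\delta,\cdot]$. The centrality of $Z$ in $B$ proved in \cite{FS} is equivalent to the inclusion $[A[A,[A,A]],L_k]\subseteq L_{k+2}$, providing exactly the extra filtration drop required to absorb these discrepancies, so the computations above --- performed with convenient multiplicative lifts --- give the correct image in $B_m$.
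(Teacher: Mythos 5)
Your proof is correct, and for Type (1) it is essentially the paper's argument: expand $(1\otimes b)*(a\otimes1-1\otimes a)^3$, feed the leading three terms into Lemma \ref{identity}, and observe that the remainder terms all wrap $Z$ in at least three extra bracket layers, landing in $L_{i+4}$ and hence, after the outer brackets, in $L_{m+1}$. (Two harmless slips: your $\Phi$ should read $\xi(b*a^k)$ rather than $\xi(a^k*b)$, and with multiplicative lifts the middle argument comes out as $\beta\alpha^k$ rather than the $\alpha^k\beta$ that Lemma \ref{identity} literally uses; both discrepancies are exact forms / elements of $L_2$ and are absorbed exactly by the centrality argument you give at the end.)

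For Type (2) you take a genuinely different route. The paper evaluates the innermost bracket directly in $B_2\cong\Omega^{ev}_{*,ex}$: the commutator $[\xi(u),\xi(v)]$ corresponds to the exact form $2\,du\wedge dv$, and the alternating sum $d(b_1a^2)\wedge db_2 - 2\,d(b_1a)\wedge d(b_2a) + db_1\wedge d(b_2a^2)$ is identically zero, so the whole thing dies already in $B_2$. You instead lift everything to $A$ with multiplicative representatives, collapse $P$ by a Leibniz computation to $\beta_1[\alpha,[\alpha,\beta_2]]-\beta_2[\alpha,[\alpha,\beta_1]]$, and observe this sits in $L_2\cap A[A,[A,A]]$, which equals $L_3$ by the Feigin--Shoikhet isomorphism $\Omega^{ev}_{*,ex}\cong B_2$. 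Both arguments are valid; the paper's is more conceptual and avoids lift-chasing entirely, whereas yours is more elementary and makes the role of $L_2\cap A[A,[A,A]]=L_3$ explicit. Your closing paragraph about lift ambiguity --- using $[A[A,[A,A]],L_k]\subseteq L_{k+2}$ from the centrality of $Z$ --- is a correct and useful observation that the paper leaves implicit, and it is needed to make the $A$-level computations in both parts rigorously descend to $B_m$.
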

\begin{proof}
 Elements of type 1 are killed by $f_m$ by Lemma \ref{identity}, as $$(1\otimes b)*(a\otimes  1-1\otimes a)^3=(a^3\otimes b-3a^2\otimes b*a+3a\otimes b*a^2-1\otimes b*a^3).$$ Elements of type 2 are killed, as, for any functions $a,b_1,b_2$, $$d(b_1a^2)\wedge db_2-2d(b_1a)\wedge d(b_2a)+db_1\wedge d(b_2a^2)=0. $$\end{proof} 

Let $K_0\subset Y$ be the associated graded of $K$ under the ``rank of forms'' filtration, and let $K_0'$ be the span of elements of type $1$ and $2$ in the commutative algebra of forms. The Jordan-H\"{o}lder series of $Y/K$ is the same as the Jordan-H\"{o}lder series of $Y/K_0$, so it is dominated by the Jordan-H\"{o}lder series of $Y/K_0'$. So, it suffices to show that all $\mathcal{F}_{\lambda}$ occurring in $Y/K_0'$ satisfy the bound of the theorem. We will do this by precisely computing the Hilbert series of $Y/K_0'$.  First, we recall a well-known fact from commutative algebra.

\begin{lem}\label{cor2} Let $B$ be a commutative algebra, and $I\subset B\otimes B$ be the kernel of the multiplication homomorphism 
$\mu:B\otimes B \to B.$ Also, let $k\in \mathbb{N}$. Then, $I^k$ is spanned by elements of the form $(1\otimes b)(a\otimes 1-1\otimes a)^k$, where $a,b \in B$.   
\end{lem}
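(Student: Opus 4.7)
The plan is to prove the lemma in two stages. First, I reduce arbitrary elements of $I$ to the preferred shape $(1\otimes b)(a\otimes 1-1\otimes a)$. Then I apply a polarization argument to collapse a general $k$-fold product of such generators into a single $k$-th power.

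For the first stage, the key observation is the elementary identity
$$c\otimes d=(1\otimes d)(c\otimes 1-1\otimes c)+1\otimes cd$$
in $B\otimes B$, which uses commutativity of $B$. Given an element $\sum_i c_i\otimes d_i\in I$, the defining relation $\sum_i c_id_i=0$ makes the last summand collapse, yielding
$$\sum_i c_i\otimes d_i=\sum_i(1\otimes d_i)(c_i\otimes 1-1\otimes c_i),$$
so $I$ is spanned as a vector space by elements of the form $(1\otimes b)(a\otimes 1-1\otimes a)$. Multiplying $k$ such generators together and using commutativity of $B\otimes B$ to gather the scalar factors $1\otimes b_i$ into a single $1\otimes b_1\cdots b_k$, I find that $I^k$ is spanned by elements of the form
$$(1\otimes b)\prod_{j=1}^k(a_j\otimes 1-1\otimes a_j),\qquad a_1,\dots,a_k,b\in B.$$

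For the second stage, set $\phi(a):=a\otimes 1-1\otimes a$, so that $(a_1,\dots,a_k)\mapsto\phi(a_1)\cdots\phi(a_k)$ is multilinear and, because the $\phi(a_j)$ lie in the commutative ring $B\otimes B$, symmetric. The standard polarization identity (extract the coefficient of $t_1\cdots t_k$ from $\phi(t_1a_1+\cdots+t_ka_k)^k$ via inclusion-exclusion) then gives
$$k!\prod_{j=1}^k\phi(a_j)=\sum_{S\subseteq\{1,\dots,k\}}(-1)^{k-|S|}\,\phi\!\left(\sum_{j\in S}a_j\right)^{\!k},$$
which is valid in our characteristic-zero setting. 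Multiplying by $(1\otimes b)$ rewrites each spanning element from the first stage as a linear combination of elements $(1\otimes b)(a\otimes 1-1\otimes a)^k$, which completes the proof. No real obstacle is anticipated; the only points requiring care are that commutativity of $B$ is invoked both to collect scalar factors in the first stage and to guarantee symmetry for the polarization step, and that dividing by $k!$ relies on the characteristic-zero assumption in force throughout the paper.
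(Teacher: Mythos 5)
Your proof is correct and rests on the same key technique as the paper's: polarization of the symmetric multilinear map $(a_1,\dots,a_k)\mapsto\phi(a_1)\cdots\phi(a_k)$ in a characteristic-zero setting. The organization is cleaner, though. The paper first polarizes to get $I^k$ spanned by $(b_1\otimes b_2)\phi(a)^k$ with a general ring coefficient, and then invokes the identity $(b_1\otimes b_2)\phi(a)=(1\otimes b_2)\phi(b_1a)-(1\otimes ab_2)\phi(b_1)$ to normalize that coefficient to $1\otimes b$; but this identity replaces the pure power $\phi(a)^k$ by products $\phi(c)\phi(a)^{k-1}$, so as written it tacitly requires a second pass of polarization to finish. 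You instead normalize the coefficient at the level of $I$ itself, using $c\otimes d=(1\otimes d)\phi(c)+1\otimes cd$ together with the defining relation $\sum c_id_i=0$ to show $I$ is linearly spanned by $(1\otimes b)\phi(a)$; collecting coefficients by commutativity then gives the form $(1\otimes b)\prod_j\phi(a_j)$, and a single application of polarization finishes. Same idea, but your ordering avoids the loose end in the paper's last step.
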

\begin{proof} Obviously $(a\otimes1-1\otimes a)\in I$, so $(1\otimes b)(a\otimes 1-1\otimes a)^k\in I^k$. Now $I^k$ is generated by $(a_1\otimes 1-1\otimes a_1)\cdots(a_k\otimes 1-1\otimes a_k)$, for $a_i\in B$. Because for every vector space $V$, $S^k(V)$ is spanned by \{$v^{\otimes k}|v\in V$\}, such elements can be obtained as linear combinations of elements of the form $(a\otimes1-1\otimes a)^k$. So, $I^k$ is spanned by $(b_1 \otimes b_2)(a\otimes1-1\otimes a)^k$. But
\begin{align*}(b_1\otimes b_2)(a\otimes 1-1\otimes a)^k=&(1\otimes b_2)(b_1a\otimes 1-1\otimes b_1a)(a\otimes1-1\otimes a)^{k-1}\\
&-(1\otimes ab_2)(b_1 \otimes 1-1\otimes b_1)(a\ot 1 - 1\ot a)^{k-1},\end{align*}
so we are done. \end{proof}
We let $R=\mathbb{C}[x_1,\ldots,x_n]^{\otimes m}=\mathbb{C}[x_1^1,\ldots,x_n^1,x_1^2,\ldots,x_n^2,\ldots,x_1^m,\ldots,x_n^m]$.  Let $J_j$, for $1\leq j \leq m-1$ be the ideal in $R$ generated by $X^{j}_i:=x^{j}_i-x^{j+1}_i$. Let $$J=\displaystyle \sum_{j=1}^{m-2}J^3_j+J_{m-1}^2.$$
\begin{cor}
$K'_0=JY,$ so $Y/K'_0=Y/JY$.
\end{cor}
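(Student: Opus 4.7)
The plan is to establish the identity $K'_0 = JY$ via mutual inclusion, relying on Lemma \ref{cor2} as the algebraic bridge. The forward inclusion $K'_0 \subseteq JY$ is immediate from the definitions: a type 1 generator of $K'_0$ contains the factor $(a \otimes 1 - 1 \otimes a)^3$ in tensor positions $(j,j+1)$ for some $1 \leq j \leq m-2$, and since $a \in \Omega^0 = \mathbb{C}[x_1,\ldots,x_n]$, this factor lies in the ideal $J_j^3 \subseteq R$. The remaining tensor factors are elements of $\Omega^{ev}$, hence the whole element lies in $J_j^3 \cdot Y \subseteq JY$; type 2 generators lie in $J_{m-1}^2 Y$ by the same reasoning.

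For the reverse inclusion $JY \subseteq K'_0$, I would apply Lemma \ref{cor2} to the commutative ring $B = \mathbb{C}[x_1,\ldots,x_n]$ with $I$ the kernel of the multiplication map $B \otimes B \to B$: the lemma furnishes a description of $I^k$ as the span of $(1 \otimes b)(a \otimes 1 - 1 \otimes a)^k$ for $a, b \in B$. Localising to tensor positions $(j,j+1)$ of $R$, this shows that the ideal $J_j^k$ is generated by elements carrying this shape in positions $(j,j+1)$ and arbitrary polynomial content elsewhere. Multiplying such a generator by an arbitrary $y \in Y$ produces an element whose positions $(j,j+1)$ acquire the more general form $(y_j \otimes b y_{j+1})(a \otimes 1 - 1 \otimes a)^k$; this is then brought back into the canonical type 1 (respectively type 2) form by iterating the transformation identity from the proof of Lemma \ref{cor2} together with polarisation of the resulting products of distinct kernel elements.

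The main technical obstacle is this final re-absorption step. When the $Y$-factor $y_j$ carries positive form degree, one must ensure that after rewriting, the polynomial parameter $a$ remains in $\Omega^0$, as demanded by the definition of type 1 and type 2 elements. This is handled by using the Lemma \ref{cor2} identity to move any form content of $y_j$ out of the $(a \otimes 1 - 1 \otimes a)$-slot and into the surrounding $p_i$- and $b$-slots, which by construction are allowed to carry arbitrary elements of $\Omega^{ev}$. Once the distribution of form degrees is carefully tracked, the polarisation argument acts only on the polynomial parts of the tensor factors in positions $(j,j+1)$, preserving the constraint $a \in \Omega^0$, and the corollary follows. The equality $Y/K'_0 = Y/JY$ is then an immediate consequence.
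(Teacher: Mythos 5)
Your overall strategy --- mutual inclusion with Lemma \ref{cor2} as the bridge --- is the same one the paper uses (its proof is a one-liner citing Lemmas \ref{cor1} and \ref{cor2}).  The forward inclusion $K_0' \subseteq JY$ is fine.  For the reverse inclusion, note first that when $n=2,3$ we have $Y=(\Omega^0)^{\otimes m}=R$: every tensor factor is a polynomial, so Lemma \ref{cor2} applied in positions $(j,j+1)$ gives $J_j^k Y = J_j^k R$ as the span of exactly the type-$1$ (resp.\ type-$2$) elements, and the corollary is immediate.

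The issue is your handling of $n\geq 4$.  You correctly identify that a generator $\omega = \omega^{(1)}\otimes\cdots\otimes\omega^{(m)}$ of $Y$ over $R$ may carry a $2$-form in position $j$, so a spanning element of $J_j^3 Y$ takes the form $(\omega^{(j)}\otimes b\,\omega^{(j+1)})(a\otimes 1-1\otimes a)^3$ in slots $(j,j+1)$, with $\omega^{(j)}\neq 1$.  But your proposed fix --- ``move any form content of $y_j$ out of the $(a\otimes 1-1\otimes a)$-slot and into the surrounding $p_i$- and $b$-slots'' --- does not work.  The $p_i$ and $b$ occupy tensor positions \emph{other than} $j$; form degree sits in a fixed tensor factor and cannot be transported to a different factor of a tensor product.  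Concretely, applying the rewriting identity from the proof of Lemma \ref{cor2} with $b_1=\omega^{(j)}$ replaces the parameter $a\in\Omega^0$ by $\omega^{(j)}a$ or $\omega^{(j)}$, which have positive form degree, and iterating never restores the constraint $a\in\Omega^0$ demanded by the definition of a type-$1$ element (which forces position $j$ to contain only powers $1,a,a^2,a^3$ of a function).  So as written, the reverse inclusion $JY\subseteq K_0'$ is not established for $n\geq 4$, and this is a genuine gap in your argument rather than a detail to be ``carefully tracked.''
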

\begin{proof}This follows immediately from Lemma \ref{cor1} and Lemma \ref{cor2}.\end{proof}
Now, we can finish the proof of the theorem. Namely, $Y$ is a free module over $R$, so $h_{Y/JY}=h_{R/J}\cdot h_{X}$, where $h_X$ is the Hilbert series of the generators over $R$ of $Y$.  For $n=2,3$, we have $h_X=1$, while for $n\geq 4$, we have:
\begin{equation}\label{hX}h_X= (1+\sigma_2)^{m-2}\cdot\sum_{j+k\leq 2\lfloor\frac{n-2}{2}\rfloor} \sigma_{2j}\cdot\sigma_{2k}.\end{equation}
where $\sigma_l=\displaystyle \sum_{i_1<\cdots<i_l} t_{i_1}\cdots t_{i_l}$ are elementary symmetric functions. Now, from the description of $J$, we compute

$$h_{R/J}=\frac{(1+\displaystyle \sum t_i+\displaystyle\sum_{i\leq j}t_it_j)^{m-2}(1+\displaystyle\sum t_i)}{(1-t_1)\cdots(1-t_n)}.$$ Thus $h_{Y/JY}\cdot(1-t_1)\cdots(1-t_n) $ is a polynomial of degree less than or equal to $2m-3$, for $n=2,3$ and $2m-3 +2(m-2) + 2\lfloor\frac{n-2}{2}\rfloor$ for $n\geq 4$, proving Theorem \ref{2m-3}. 

\begin{rem}
For n=2, it follows from the proof of the theorem that the image, $f_m(v_m)$, of 
$$v_m = (x_1-x_2)(y_1-y_2)\cdots(x_{m-2}-x_{m-1})(y_{m-2}-y_{m-1})(x_{m-1}-x_m)$$
generates a copy of $(m-1,m-2),$ if it is non-zero (here we have used the notation $x_k:=x_1^k, y_k:=x_2^k$).  We conjecture that $f_m(v_m)$ is indeed non-zero, and that $(m-1,m-2)$ occurs with multiplicity one in $B_m(A_2)$.  Furthermore we conjecture that for $(p,q)$ occurring in $B_m(A_2)$, one has $p\leq m-1$.  For $m\leq 7$, these conjectures are confirmed in Theorem \ref{JH}.
\end{rem}

\section{The complete structure of $B_3(A_n)$}\label{PavelConjPf}
In this section, we prove Theorem \ref{PavelConj}, which was first conjectured by P. Etingof.  Firstly, we show that only representations of the form $(2,1^{2i-1},0^{n-2i})$ can appear in the Jordan-H\"older series of $B_3(A_n)$; this is accomplished by a strengthening of Theorem \ref{2m-3} in the case $m=3$.  Secondly, we use the representation theory of $\mathfrak{gl}_n$ to bound the multiplicity of each $(2,1^{2i-1},0^{n-2i})$ in $B_3(A_n)$ to at most one.  Thirdly, we exhibit a non-zero vector in each $(2,1^{2i-1},0^{n-2i})$ by representing the generators of $A_n$ in a certain quotient algebra.  Finally we show that the sum appearing in the Theorem is direct.  Steps 2-4 were explained to us by P. Etingof.

\subsection{Step one.}
 \begin{lem} \label{simplemult}
 Let $m \geq 3$. Then no $\mathcal{F}_{(1^k,0^{n-k})}$ occurs in $B_m$. 
 \end{lem}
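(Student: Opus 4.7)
Plan: The strategy is to analyze the subspace of singular vectors of $B_m$ under the $W_n$-action, namely the vectors killed by every $\partial_i\in W_n$. For a module in the category of Theorem \ref{semisimple}, the multiplicity of a composition factor $\mathcal{F}_\lambda$ equals the multiplicity of $V_\lambda$ in the $\mathfrak{gl}_n$-representation of singular vectors in degree $|\lambda|$: this is because each $\mathcal{F}_\lambda$ contributes exactly one copy of $V_\lambda$ in its lowest degree $|\lambda|$, and no contribution in any other degree. Since $\mathcal{F}_{(1^k,0^{n-k})}$ has singular vectors equal to $V_{(1^k)}=\Lambda^k(\CC^{n*})$ sitting in degree $k$, the task reduces to showing that $V_{(1^k)}$ does not appear in $B_m[k]^{\mathrm{sing}}$ for any $k\geq m$ (the range $k<m$ is trivial since $B_m[k]=0$).

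The essential case is $k=m$, the minimal admissible degree for a composition factor of $B_m$. Here $B_m[m]$ is spanned by $m$-fold iterated commutators $[x_{i_1},[x_{i_2},\ldots [x_{i_{m-1}},x_{i_m}]]\ldots]$ of the generators, and so is identified with $\mathcal{L}_n[m]$, the degree-$m$ component of the free Lie algebra on $n$ letters. Every element of $B_m[m]$ is singular: applying $\partial_i$ to a generator produces the scalar $\delta_{i,i_j}$, and once a scalar is inserted into an iterated commutator the expression collapses to zero. Hence $B_m[m]^{\mathrm{sing}}=\mathcal{L}_n[m]$. By the Klyachko/Kraskiewicz--Weyman theorem, the multiplicity of $V_{(1^m)}$ in $\mathcal{L}_n[m]$ equals the number of standard Young tableaux of shape $(1^m)$ whose major index is $\equiv 1\pmod m$. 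There is a unique SYT of that shape, its major index is $\binom{m}{2}$, and an elementary check (using $\binom{m}{2}\equiv 0\pmod m$ for $m$ odd and $\binom{m}{2}\equiv m/2\pmod m$ for $m$ even) shows that $\binom{m}{2}\equiv 1\pmod m$ holds only when $m=2$. Therefore the multiplicity vanishes for all $m\geq 3$, disposing of the case $k=m$.

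For the remaining range $k>m$, the plan is to proceed by induction on $m$, using the $W_n$-equivariant bracket surjection $\bar B_1\otimes B_{m-1}\twoheadrightarrow B_m$, the known decomposition of $\bar B_1\cong\Omega^{ev}/\Omega^{ev}_{ex}$ into composition factors of $\mathcal{F}_{(1^j)}$-type (with $j=0$ or $j$ odd), and the inductive hypothesis that no $\mathcal{F}_{(1^k)}$ occurs in $B_{m-1}$. One traces how a putative singular vector of type $V_{(1^k)}$ in $B_m[k]$ would have to arise from the bracket of a singular vector of $\bar B_1$ with an element of $B_{m-1}$, and verifies that each such contribution either vanishes or fails to be singular in $B_m$. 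The chief technical obstacle is precisely this $k>m$ analysis, requiring careful bookkeeping of the Schur--Weyl structure of the tensor factors under the bracket and control of the non-split extensions among the tensor field modules, so as to ensure that no new sign representation $V_{(1^k)}$ can enter in the JH series of $B_m$.
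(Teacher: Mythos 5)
Your approach is genuinely different from the paper's, and it contains two independent gaps, one of which you flag yourself and one of which you do not.

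First, the flagged gap: you only actually prove the case $k=m$ (via the free Lie algebra and Klyachko's theorem), and for $m<k\leq n$ you give only a ``plan,'' openly identifying the $k>m$ bookkeeping as ``the chief technical obstacle.'' So as written, the proof is incomplete: for any $n>m$ there are diagrams $(1^k,0^{n-k})$ with $k$ strictly between $m$ and $n$ that are simply not treated.

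Second, and more seriously, your opening reduction is incorrect. You assert that for a module in the category of Theorem \ref{semisimple}, the multiplicity of $\mathcal{F}_\lambda$ in the Jordan--H\"older series equals the multiplicity of $V_\lambda$ in the singular vectors in degree $|\lambda|$. This fails precisely for the diagrams $\lambda=(1^k,0^{n-k})$ that the Lemma concerns, because the category is not semisimple there. For instance $\Omega^1$ has JH factors $\mathcal{F}_{(1,0^{n-1})}$ and $\mathcal{F}_{(1,1,0^{n-2})}$, yet a direct check (using $L_{\partial_i}(x_j\,dx_l)=\delta_{ij}\,dx_l$) shows $\Omega^1$ has no degree-$2$ singular vector, so the singular-vector count \emph{underestimates} the JH multiplicity. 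In general, a singular vector produces a nonzero map from the parabolic Verma module, hence at least one copy of $\mathcal{F}_\lambda$ in the JH series; but a JH factor sitting as a quotient in a non-split extension need not contribute a singular vector. Thus ``no singular vectors of type $V_{(1^k)}$'' does \emph{not} imply ``$\mathcal{F}_{(1^k)}$ does not occur,'' and even if your $k>m$ plan were carried out it would establish the wrong statement. Your $k=m$ case survives only because there $B_m[m]$ is the minimal-degree component, so $B_m[m]^{\mathrm{sing}}=B_m[m]$ and $\mathfrak{gl}_n$-semisimplicity makes the count exact; that special feature does not persist for $k>m$.

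By contrast, the paper sidesteps singular vectors entirely. It first handles $k=n$: the multidegree-$(1,\ldots,1)$ component of $\mathcal{F}_{(1^n)}$ is the sign representation of $S_n$, while the polylinear part of $A_n$ is the regular representation, so the sign occurs exactly once across all $B_m(A_n)$; the Feigin--Shoikhet identification places that single copy in $\bar B_1$ (for $n$ odd) or $B_2$ (for $n$ even), so it cannot appear in $B_m$ for $m\geq 3$. Then $k<n$ reduces to $k=n$ with fewer variables: specializing $x_{k+1}=\cdots=x_n=0$ carries a hypothetical occurrence of $(1^k,0^{n-k})$ in $B_m(A_n)$ to an occurrence of $(1^k)$ in $B_m(A_k)$, contradicting the first case. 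This multidegree/restriction argument is exact (multigraded Hilbert series determine JH multiplicities) and uniform in $k$. Your use of Klyachko's theorem for the extremal degree $k=m$ is correct and a nice alternative observation, but to finish along your lines you would need to replace the singular-vector criterion by a character/Hilbert-series argument and supply the entire $k>m$ range.
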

\begin{proof}
\textbf{Case 1: $k=n$.}\\
As a representation of $S_n$, the polylinear part (i.e., the part of degree 1 in each variable) of $(1,\ldots,1)$ is isomorphic to the sign representation. On the other hand, the polylinear part of $A_n$ (i.e., the span of monomials of the form $x_{\sigma(1)}\cdots x_{\sigma(n)}$ for $\sigma \in S_n$) is clearly a copy of the regular representation, which contains the sign representation exactly once, and thus the total multiplicity of $(1,\ldots,1)$ in all $B_{m}(A_n)$ is equal to 1. In \cite{FS}, it is shown that when $n$ is odd, $(1,\ldots,1) \subset \bar{B}_1(A_n)$ and when $n$ is even, $(1,\ldots,1) \subset {B}_2(A_n)$, so $(1,\ldots,1)$ cannot occur in $B_m$ for $m\geq3$.\\
\textbf{Case 2: $k<n$.}\\
Suppose that $(1^k,0^{n-k})$ occurs in $B_m(A_n)$. If $k<n$, then this means $B_m(A_{k})$ contains a copy of $(1^k)$, which contradicts Case 1.
\end{proof}

\begin{lem}\label{B3identity} We have:
\begin{equation*}[x[y,z],[w,v]]=[x,[w[y,z],v]] - [y,[w[x,z],v]]+[z,[w[x,y],v]]\mod L_4\end{equation*}
\end{lem}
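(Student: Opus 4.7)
My plan is to show that both sides of the asserted congruence already lie in $L_4$, whence the equality modulo $L_4$ is automatic.

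The key fact is that $L_2 = [A,A]$ is a two-sided associative ideal of $A$: for any $a, b, c \in A$, the identity $a[b,c] = [ab,c] - [a,c]b$ exhibits $a[b,c]$ as an element of $L_2$. Consequently $x[y,z] \in A \cdot L_2 \subseteq L_2$, and the LHS is a commutator of two elements of $L_2$. Invoking the standard estimate $[L_p, L_q] \subseteq L_{p+q}$ for the lower central series --- an easy induction on $q$ via the Jacobi identity --- I conclude that the LHS lies in $[L_2, L_2] \subseteq L_4$.

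For each summand on the RHS, the element $w[y', z']$ (with $\{y', z'\} \subset \{x, y, z\}$) lies in $L_2$ by the same argument. Hence $[w[y', z'], v] \in [L_2, A] = L_3$, and therefore $[x_i, [w[y', z'], v]] \in [A, L_3] = L_4$. Both sides thus lie in $L_4$, and the asserted identity collapses to the trivial congruence $0 \equiv 0 \pmod{L_4}$.

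There is no real obstacle here: the content of the lemma is the (easy) observation that any cubic bracket $[x_i[x_j,x_k], c]$ with $c \in L_2$ vanishes in $B_3$, which is presumably what Corollary \ref{B3case} exploits to remove the cubic terms from Theorem \ref{xyxy}(2) in the case $m=2$. Should a finer identity in $L_4/L_5$ actually be intended, my fallback would be to expand the LHS via the associative Leibniz rule $[AB, C] = A[B, C] + [A, C]B$, reduce the term $x[[y,z],[w,v]]$ modulo $L_4$, and then match the surviving expression against the Jacobi-like antisymmetric sum over $\{x,y,z\}$ appearing on the RHS.
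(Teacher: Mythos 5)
Your plan collapses at the very first step: $L_2=[A,A]$ is a \emph{Lie} ideal of $A$ (being a term of the lower central series), but it is \emph{not} a two-sided associative ideal. The identity $a[b,c]=[ab,c]-[a,c]b$ that you invoke does not place $a[b,c]$ in $[A,A]$: the term $[ab,c]$ is a commutator, but $[a,c]b$ lies in $[A,A]\cdot A$, not in $[A,A]$. Concretely, in the free algebra on $x,y,z$ one has $x[y,z]=xyz-xzy$, and since $A/[A,A]$ is the space of cyclic words, $xyz$ and $xzy$ are distinct there; so $x[y,z]\notin L_2$. Consequently the inclusion $[x[y,z],[w,v]]\in[L_2,L_2]\subseteq L_4$ fails, and likewise for the summands on the right. (The associative two-sided ideal generated by $[A,A]$ is $A[A,A]A$, which is strictly larger than $[A,A]$; $A/[A,A]$ is the trace space of cyclic words, while $A/A[A,A]A$ is the polynomial ring.)

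So the lemma is a genuine identity in $L_3/L_4$, not a vacuous one. Indeed, if the left-hand side were zero modulo $L_4$, Corollary~\ref{B3case} would be immediate and the whole apparatus would be unnecessary. The paper's proof is substantive: it rewrites $\psi$ (the difference of the two sides) using the Jacobi identity, then identifies terms of the form $[\alpha[\beta,\gamma],\delta]$ with multiples of $\xi(d\alpha\wedge d\beta\wedge d\gamma\wedge d\delta)$ via the Feigin--Shoikhet isomorphism of Theorem~\ref{isom}, observes that the resulting expression $\psi$ is skew under the transpositions $(x,y),(y,z),(z,w),(w,v)$ and hence, if nonzero, spans a copy of the sign representation in the polylinear part of $B_3$; Lemma~\ref{simplemult} rules this out, giving $\psi\equiv 0\bmod L_4$. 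Your fallback suggestion (expand by the Leibniz rule and match against the antisymmetric sum on the right) points in roughly the right direction, but you would still need the differential-form identification and the sign-representation argument to close the gap; a bare Leibniz expansion does not terminate.
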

\begin{proof}
Let $G$ denote the symmetric group on the set $\{x,y,z,w,v\}$, and let $\psi$ denote the RHS minus the LHS of the identity.  Applying the Jacobi identity to the term $[x[y,z],[w,v]]$, we have
\begin{align*}
\psi =[x,[w[y,z],v]] - [y,[w[x,z],v]] + [z,[w[x,y],v]] - [w,[x[y,z],v]] + [v,[x[y,z],w]].
\end{align*}
Recall the isomorphism $\xi:\Omega^{ev,\geq 2}_{ex}\overset\sim\to B_2$ of Theorem \ref{isom}; for any $\alpha,\beta,\gamma,\delta\in \bar{B}_1$, we have $[\alpha[\beta,\gamma],\delta]=4\xi(d\alpha\wedge d\beta \wedge d\gamma \wedge d\delta) \mod L_3$.  Thus we may re-express $\psi$:
\begin{align*}
\psi =& 4\big( [x,\xi(dw\wedge dy \wedge dz \wedge dv)] - [y,\xi(dw\wedge dx \wedge dz\wedge dv)] + [z,\xi(dw\wedge dx \wedge dy \wedge dv)]\\ &- [w,\xi(dx\wedge dy \wedge dz \wedge dv)] + [v,\xi(dx \wedge dy \wedge dz \wedge dw)]\big).
\end{align*}
We see immediately that $\psi$ is skew symmetric with respect to each of the permutations $(x,y)$, $(y,z)$, $(z,w)$ and $(w,v)$.  As these permutations generate $G$, it follows that if $\psi$ is non-zero, then $\CC\psi$ is isomorphic to the sign representation.  However, we have already seen in the proof of Lemma \ref{simplemult} that the sign representation does not occur in the polylinear part of $B_m(A_n)$ for any $m\geq 3$, and thus $\psi=0 \mod L_4$.
\end{proof}
\begin{cor} \label{B3case}$B_3(A_n)=\sum_{i}[x_i,B_2] + \sum_{i\leq j}[x_ix_j,B_2]$.
\end{cor}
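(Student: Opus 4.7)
The plan is to reduce the statement to Theorem \ref{xyxy} and then, in the $n \geq 4$ case, eliminate the extra cubic terms $\sum_{i<j<k}[x_i[x_j, x_k], B_2]$ that appear there by using the identity of Lemma \ref{B3identity}; the low-rank cases $n = 2, 3$ should be essentially immediate, and the reverse containment $\sum_i[x_i,B_2]+\sum_{i\leq j}[x_ix_j,B_2]\subseteq B_3$ is automatic from $B_3=L_3/L_4$.

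First I would dispose of the easy cases. For $n=3$, Theorem \ref{xyxy}(3) applied with $m=2$ is literally the claim. For $n=2$, Theorem \ref{xyxy}(4) gives $B_3 = \sum_i[x_i, B_2] + \sum_{i<j}[x_ix_j, B_2]$, and since $\{i<j\}\subseteq\{i\leq j\}$ this is contained in the right-hand side of the corollary. The substantive case is $n \geq 4$.

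For $n \geq 4$, Theorem \ref{xyxy}(2) with $m=2$ gives $B_3 = \sum_i[x_i, B_2] + \sum_{i \leq j}[x_ix_j, B_2] + \sum_{i<j<k}[x_i[x_j, x_k], B_2]$, so it suffices to show that the last summand is contained in $\sum_a[x_a, B_2]$. Since $B_2 = L_2/L_3$ is spanned by images of commutators $[w,v]$ with $w, v \in A$, this further reduces to showing that $[x_i[x_j, x_k], [w,v]]$ lies in $\sum_a[x_a, B_2]$ in $B_3 = L_3/L_4$. Lemma \ref{B3identity}, with $x=x_i,y=x_j,z=x_k$, delivers exactly this: modulo $L_4$ the expression equals $[x_i, [w[x_j,x_k], v]] - [x_j, [w[x_i,x_k], v]] + [x_k, [w[x_i,x_j], v]]$, and each summand has the form $[x_a, [\alpha, v]]$ with $[\alpha, v] \in L_2$, so its class in $B_3$ lies in $[x_a, B_2]$ as required.

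The real work is already in Lemma \ref{B3identity}, where the absence of the sign representation in the polylinear component of $B_m(A_n)$ for $m \geq 3$ (Lemma \ref{simplemult}) is leveraged to force the skew-symmetric five-variable expression $\psi$ to vanish modulo $L_4$. Once that lemma is in hand, the present corollary is a straightforward bookkeeping reduction, and I expect no further obstacle.
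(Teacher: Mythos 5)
Your proof is correct and takes essentially the same approach as the paper: both arguments rest on Lemma \ref{B3identity} to remove the cubic commutator terms and on the machinery behind Theorem \ref{xyxy} (Corollary \ref{symm} and Lemma \ref{lemmae}) to land in the quadratic picture. You simply make the invocation of Theorem \ref{xyxy}(2) explicit where the paper's proof folds that reduction in as part of a one-line degree-lowering argument.
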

\begin{proof}
Lemma \ref{B3identity}, combined with Corollary \ref{symm}, allows us to reduce the degree of any expression in the outer slot of degree three or greater, thus leaving only the quadratic terms, as desired.
\end{proof}

\begin{lem} \label{B3bound} For $\mathcal{F}_\lambda$ appearing in the Jordan-H\"older series of $B_3(A_n)$, we have 
$$|\lambda|\leq 3 + 2\lfloor\frac{n-2}{2}\rfloor=\left\{\begin{array}{ll}n, &\textrm{n odd}\\n+1, &\textrm{n even}\end{array}\right.$$
\end{lem}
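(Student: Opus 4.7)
The plan is to adapt the proof of Theorem~\ref{2m-3} to $m=3$, replacing Theorem~\ref{xyxy}(2) by the stronger Corollary~\ref{B3case}. In that proof, the cubic outer generators $[x_i[x_j,x_k],B_2]$ forced the outermost tensor slot of the auxiliary module $Y$ to range over $\Omega^0+\Omega^2$; Corollary~\ref{B3case} removes those generators and lets us take the slot to be just $\Omega^0$, shaving the factor $(1+\sigma_2)^{m-2}=(1+\sigma_2)$ of degree $2$ from the Hilbert series $h_X$ of the $R$-module generators of $Y$. That is precisely the gap between the general bound $5+2\lfloor(n-2)/2\rfloor$ and the target $3+2\lfloor(n-2)/2\rfloor$. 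The cases $n=2,3$ need no new work, since Theorem~\ref{2m-3}(2) already gives $|\lambda|\le 2m-3=3$.

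For $n\ge 4$, I would define
$$Y:=\Omega^0\otimes\Big(\bigoplus_{j+k\le\lfloor(n-2)/2\rfloor}\Omega^{2j}\otimes\Omega^{2k}\Big)$$
and argue that $f_3|_Y$ surjects onto $B_3$: the outer $\Omega^0$ suffices by Corollary~\ref{B3case}, since every element of $B_3$ has the form $[\xi(p),\beta]$ with $p\in\Omega^0$ and $\beta\in B_2$; the inner two slots surject onto $B_2=\xi(\Omega^{ev}_{ex})$ via $[\xi(\alpha),\xi(\beta)]=\xi(d\alpha\wedge d\beta)$, exactly as in the proof of Theorem~\ref{2m-3}. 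Let $K\subset Y$ be the submodule generated by the same type~1 (cubic, in the outer pair of slots) and type~2 (quadratic, in the inner pair) relations as in Lemma~\ref{cor1}; each of these already has $\Omega^0$ in the outer slot, so it lies inside the shrunken $Y$.

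The commutative-algebra analysis then runs verbatim as in Section~\ref{2m-3pf}. With $R=\mathbb{C}[x_1,\dots,x_n]^{\otimes 3}$ and $J=J_1^3+J_2^2$, one has $Y/K_0'=Y/JY$ and
$$h_{R/J}\cdot(1-t_1)\cdots(1-t_n)=\Big(1+\sum_i t_i+\sum_{i\le j}t_it_j\Big)\Big(1+\sum_i t_i\Big),$$
a polynomial of degree $3$. The new generator series is $h_X=\sum_{j+k\le\lfloor(n-2)/2\rfloor}\sigma_{2j}\sigma_{2k}$, of degree $2\lfloor(n-2)/2\rfloor$, with the $(1+\sigma_2)$ factor from Theorem~\ref{2m-3} now absent. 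Multiplying, $h_{Y/JY}(1-t_1)\cdots(1-t_n)$ is a polynomial of degree $3+2\lfloor(n-2)/2\rfloor$, and because each irreducible $\mathcal{F}_\lambda$ contributes the homogeneous Schur polynomial $s_\lambda$ of degree exactly $|\lambda|$ to this numerator, every composition factor of $B_3$ (a quotient of $Y/K$) satisfies $|\lambda|\le 3+2\lfloor(n-2)/2\rfloor$. The one genuine subtlety---verifying that the shrunken $Y$ still contains the relations defining $K$---is immediate from the explicit form of the type~1 and type~2 elements.
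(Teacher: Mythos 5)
Your proof is correct and follows the paper's argument exactly: replace the outer slot $\Omega^0+\Omega^2$ used for $m=3$ in the proof of Theorem~\ref{2m-3} by $\Omega^0$, which Corollary~\ref{B3case} permits, thereby dropping the factor $(1+\sigma_2)$ from $h_X$ and reducing the degree bound from $5+2\lfloor(n-2)/2\rfloor$ to $3+2\lfloor(n-2)/2\rfloor$. The paper states this in two sentences, citing the proof of Theorem~\ref{2m-3}; you have simply unpacked the same steps in full detail, including the correct observation that the type~1 and type~2 relations of Lemma~\ref{cor1} already sit inside the shrunken $Y$.
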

\begin{proof}
By Corollary \ref{B3case}, the map $f_3$ in the proof of Theorem \ref{2m-3} is surjective when restricted to $Y:=(\Omega^0)\ot(\oplus_{j+k\leq \lfloor \frac{n-2}{2}\rfloor}\Omega^{2j}\ot\Omega^{2k})$.  Thus we may omit the factor $(1+\sigma_2)$ in equation (\ref{hX}), and we compute that $h_{Y/JY}\cdot(1-t_1)\cdots(1-t_n)$ is a polynomial of degree less than or equal to $3+2\lfloor\frac{n-2}{2}\rfloor$.
\end{proof}

\begin{cor} If $\mathcal{F}_\lambda$ appears in the Jordan-H\"older series of $B_3(A_n)$, then $\lambda=(2,1^{2i-1},0^{n-2i})$ for some $1\leq i \leq \lfloor\frac{n}{2}\rfloor$.
\end{cor}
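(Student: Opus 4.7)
The plan is to combine the constraints from Lemmas~\ref{simplemult} and \ref{B3bound} with a refined analysis of the character bound derived in the proof of Lemma~\ref{B3bound}.

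By Lemma~\ref{B3bound}, any composition factor $\mathcal{F}_\lambda$ has $|\lambda|\leq 3+2\lfloor(n-2)/2\rfloor$, and by Lemma~\ref{simplemult}, $\lambda\neq(1^k,0^{n-k})$ for $1\leq k\leq n$.  Since $B_3[d]=L_3[d]/L_4[d]=0$ for $d<3$ (as every element of $L_3$ has total degree at least $3$), and the lowest-degree part of $\mathcal{F}_\lambda$ sits in degree $|\lambda|$, we also deduce $|\lambda|\geq 3$.  Combined with the exclusion of $(1^k,0^{n-k})$ shapes, this forces $\lambda_1\geq 2$.

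To pin down the precise shape, I would exploit the explicit character bound
$$h_{Y/JY}\cdot(1-t_1)\cdots(1-t_n)=h_X\cdot(1+e_1+h_2)(1+e_1),$$
with $h_X=\sum_{j+k\leq\lfloor(n-2)/2\rfloor}e_{2j}e_{2k}$, from the proof of Lemma~\ref{B3bound}.  By Pieri, every Schur summand of $h_X$ has the form $s_{(2^c,1^{2(j+k-c)})}$, so its partition has columns of height at most $2$ and an even number of rows of length $1$.  The factor $(1+e_1+h_2)(1+e_1)=1+2s_1+2s_2+s_{11}+s_3+s_{21}$, multiplied against $h_X$, yields an upper bound in the Schur basis on the multiplicity of each $\mathcal{F}_\lambda$ in $B_3$.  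Removing $(1^k,0^{n-k})$ contributions and imposing $|\lambda|\geq 3$ narrows the candidates, and a case analysis using the parity constraint on the number of $1$'s in each $h_X$ summand then leaves only shapes of the form $(2,1^{2i-1},0^{n-2i})$.

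The main obstacle is the combinatorial case analysis needed to eliminate shapes such as $(3)$, $(2,2,1)$, or $(3,1,1)$, which satisfy the naive Hilbert series bound but do not actually appear in $B_3$.  For $|\lambda|=3$ this can be checked directly: $B_3(A_n)[3]=L_3[3]$ is isomorphic as a $\mathfrak{gl}_n$-representation to $V_{(2,1,0^{n-2})}$ by Schur--Weyl duality applied to the degree-$3$ component of the free Lie algebra on $n$ generators, ruling out $(3)$ and $(1,1,1)$.  For larger $|\lambda|$ one either inducts on $|\lambda|$ with analogous dimension counts, or restricts to the subalgebra $A_{|\lambda|}\hookrightarrow A_n$ and analyzes the polylinear part of $B_3(A_{|\lambda|})$ via its $S_{|\lambda|}$-module structure, using that only hook shapes with an odd number of boxes survive the parity and shape constraints.
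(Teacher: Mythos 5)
Your first paragraph is sound, but the core of the argument—the ``combinatorial case analysis''—is never actually carried out, and the Schur-function approach you sketch would not by itself accomplish it. The product $h_X\cdot(1+e_1+h_2)(1+e_1)$ is only an \emph{upper bound} on the character of $B_3$ (it is the character of $Y/K_0'$, of which $Y/K$ and hence $B_3$ is a subquotient), and this upper bound is far from tight: already the constant term $1$ of $h_X$ times the $s_3$ term of $(1+e_1+h_2)(1+e_1)$ contributes $s_3$, i.e.\ the shape $(3,0^{n-1})$, and products like $s_{21}\cdot e_2 e_2$ contribute shapes with $\lambda_1\geq 3$ or $\lambda_2\geq 2$. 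No amount of parity bookkeeping on the $h_X$ summands removes these, because they come from the other factor. Your degree-$3$ observation ($B_3[3]=L_3[3]\cong\mathrm{Lie}[3]\cong V_{(2,1,0^{n-2})}$) correctly kills shapes of size $3$, but ``inducts on $|\lambda|$ with analogous dimension counts'' is not an argument — the requisite counts in higher degree are not analogous, since $L_4[d]\neq 0$ for $d\geq 4$.

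The missing idea, which you only gesture at in the last clause, is an \emph{induction on $n$} via the restriction principle (if $\lambda_n=0$ and $\mathcal{F}_\lambda$ appears in $B_3(A_n)$, then $\mathcal{F}_{(\lambda_1,\ldots,\lambda_{n-1})}$ appears in $B_3(A_{n-1})$ — this is the same mechanism used in Case~2 of Lemma~\ref{simplemult}). The paper's proof runs: base case $n=2$ is \cite{DKM}; if $\lambda_1\geq 3$ then Lemma~\ref{B3bound} forces $\lambda_n=0$, and restricting to $A_{n-1}$ contradicts the inductive hypothesis, so $\lambda_1\leq 2$; the same argument kills $\lambda_1=\lambda_2=2$; Lemma~\ref{simplemult} kills hooks $(1^k,0^{n-k})$; finally $\lambda=(2,1^k,0^{n-k-1})$ with $k$ even restricts to $(2,1^k)$ in $B_3(A_{k+1})$, where $|(2,1^k)|=k+2$ exceeds the bound $3+2\lfloor(k-1)/2\rfloor=k+1$. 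To salvage your write-up you would need to replace the entire Schur-function discussion with this induction; the ingredients you cite (Lemmas~\ref{simplemult} and \ref{B3bound}) are exactly the right ones, but the restriction step is the load-bearing part and it is absent.
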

\begin{proof}
We proceed by induction on $n$, the case $n=2$ having been proved in \cite{DKM}.  First, suppose for contradiction that some $\mathcal{F}_\lambda$ occurs in the Jordan-H\"older series for $B_3(A_n)$, with $\lambda_1\geq 3$.  Then we have $\lambda_n=0$ by Lemma \ref{B3bound}, which implies that $(\lambda_1,\ldots,\lambda_{n-1})$ occurs in $B_3(A_{n-1})$.  This contradicts the induction assumption.  Thus $\lambda_1\leq 2$.

Let us again suppose for contradiction that some $\mathcal{F}_\lambda$ occurs with \mbox{$\lambda_1=\lambda_2=2$}.  Then $\lambda_n=0$, and so $(2,2,\lambda_3,\ldots,\lambda_{n-1})$ occurs in $B_3(A_{n-1})$, which contradicts the induction assumption.

Furthermore, by Lemma \ref{simplemult}, no factors $(1^k,0^{n-k})$ may occur in $B_3$.  The only remaining possibilities are of the form $(2,1^k,0^{n-k-1})$,and it remains only to show that $k$ must be odd.  Indeed, if $k$ is even, say $\lambda=(2,1^k,0^{n-k-1})$, and $\mathcal{F}_\lambda$ occurs in $B_3(A_n)$, then $(2,1^k)$ occurs in $B_3(A_{k+1})$, which contradicts Lemma \ref{B3bound}.
\end{proof}

\subsection{Step two.}
\begin{lem}\label{multbound} The multiplicity of $(2,1^{n-1})$ in $B_3(A_n)$ is at most one.\end{lem}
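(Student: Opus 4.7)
The plan is to reduce the $W_n$-multiplicity $[\mathcal{F}_{(2,1^{n-1})}:B_3(A_n)]$ to a $\mathfrak{gl}_n$-multiplicity in a specific graded piece of $B_3(A_n)$, and then bound the latter by direct representation-theoretic analysis. For $n$ odd, $(2,1^{n-1})$ is not of the form $(2,1^{2i-1},0^{n-2i})$, so by the preceding corollary its multiplicity is already zero and the claim holds trivially. I therefore assume $n$ is even, write $\lambda=(2,1^{n-1})$, and note that $|\lambda|=n+1$ is the maximal size of an allowed Young diagram by Lemma \ref{B3bound}.

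First I would identify the multiplicity with a top-degree Schur coefficient. Using the character identity $h_{\mathcal{F}_\mu}(t)\prod_i(1-t_i)=h_{V_\mu}(t)$ for the allowed shapes, the Schur expansion of the polynomial $h_{B_3(A_n)}(t)\prod_i(1-t_i)$ recovers each multiplicity $[\mathcal{F}_\mu:B_3(A_n)]$ as a Schur coefficient. Its top-degree piece receives contributions only from allowed $\mu$ with $|\mu|=n+1$; the unique such shape is $\lambda$, so this top piece equals $[\mathcal{F}_\lambda:B_3(A_n)]\cdot h_{V_\lambda}(t)$. Translating to $\mathfrak{gl}_n$-isotypic components in degree $n+1$: for allowed $\mu=(2,1^{2i-1},0^{n-2i})$ with $i<n/2$, the piece $\mathcal{F}_\mu[n+1]=S^{n-2i}(\CC^n)\otimes V_\mu$ contains $V_\lambda$ only if the skew shape $\lambda/\mu$ is a horizontal strip of size $n-2i$ (Pieri's rule); but $\lambda/\mu$ is a vertical strip of $n-2i\geq 2$ boxes in column one, so this contribution vanishes. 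Hence $[\mathcal{F}_\lambda:B_3(A_n)]=[V_\lambda:B_3(A_n)[n+1]]$ as a $\mathfrak{gl}_n$-multiplicity.

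Next I would bound this $\mathfrak{gl}_n$-multiplicity, equivalently the dimension of the space of weight-$(2,1,\ldots,1)$ highest-weight vectors in $B_3(A_n)[n+1]$, namely those annihilated by every raising operator $E_{ij}=x_i\partial_j$ with $i<j$. Using Corollary \ref{B3case}, each such vector can be written as $\sum_i[x_i,b_i]+\sum_{i\leq j}[x_ix_j,b_{ij}]$ with $b_\bullet\in B_2\cong\Omega^{ev}_{ex}$. I would then exploit the Jacobi-type identity of Lemma \ref{B3identity} together with the explicit form-theoretic description of $\Omega^{ev}_{ex}$ to cut this count down to at most one.

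The main obstacle is this last step. The crude bound obtained from the surjection $f_3:Y\to B_3(A_n)$ of Lemma \ref{B3bound} is not tight: a direct $GL_n$-character calculation of its top piece already produces a $V_{(2,1,1,1)}$-multiplicity of two when $n=4$. The sharper bound of one must therefore come from Jacobi-type relations that are invisible in the $Y$-quotient, and cleanly identifying and applying those relations is the crux of the proof.
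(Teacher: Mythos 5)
Your overall reduction to a $\mathfrak{gl}_n$-multiplicity in the top degree piece is sound, and you correctly dispense with odd $n$ and correctly observe that the allowed $\mu\neq(2,1^{n-1})$ cannot contribute a $V_{(2,1^{n-1})}$ in degree $n+1$ by Pieri. However, you have not closed the argument, and the gap is at exactly the step you flag. The diagnosis you offer (that one needs Jacobi-type relations invisible in the $Y$-quotient, perhaps via Lemma~\ref{B3identity}) is not the route the paper takes, and is not what is missing: the paper's proof of this lemma does not invoke Lemma~\ref{B3identity} at all beyond its use in establishing Corollary~\ref{B3case}.

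What you are missing is a sharper choice of $Y$. You computed your multiplicity-two count for $n=4$ using the $Y$ from Lemma~\ref{B3bound}, $\Omega^0\otimes\bigl(\oplus_{j+k\le\lfloor(n-2)/2\rfloor}\Omega^{2j}\otimes\Omega^{2k}\bigr)$, whose generator series $h_X = 1+2\sigma_2$ indeed overshoots. But by Theorem~\ref{xyxy}(1), $B_2 = \sum_i[x_i,B_1]$, so the \emph{middle} slot can also be restricted to $\Omega^0$. The paper therefore works with $Y_p = \Omega^0\otimes\Omega^0\otimes\Omega^{2p}$ for $p=0,\dots,k-1$ (with $n=2k$); the degree bound $\deg\bigl(h_{Y_p/JY_p}\prod_i(1-t_i)\bigr)\le 2p+3$ shows only $Y_{k-1}$ can contribute a factor of size $n+1$. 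The argument is then finished by observing that constants in the outer slot die under $f_3$ and the inner two slots $\Omega^0\otimes\Omega^{n-2}$ surject via the inner bracket onto the rank-$n$ piece $\Omega^n_{ex}=\Omega^n$ of $B_2$; hence $f_3(Y_{k-1})$ is a quotient of the $\mathfrak{gl}_n$-module $M = (\oplus_{j\ge 1}S^jV)\otimes\Lambda^n V\otimes(\oplus_{m\ge 0}S^m V)$. Twisting by $(\Lambda^n V)^*$ identifies the multiplicity of $V_{(2,1^{n-1})}$ in $M$ with that of $V_{(1,0^{n-1})}$ in $(\oplus_{j\ge 1}S^jV)\otimes(\oplus_{m\ge 0}S^mV)$, which is one by Pieri (only $j=1,m=0$). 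So the sharp bound comes not from extra commutator relations but from the slimmer generating set for $B_2$ and the collapse of the inner two slots onto top-degree forms.
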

\begin{proof}
Let $n=2k$.  By Lemma \ref{B3case}, we have a surjection:
$$f_3:\sum_{p=0}^{k-1} Y_p\to B_3,$$
where $Y_p= \Omega_0 \ot\Omega_0 \ot\Omega_{2p}$.  As in the proof of Theorem \ref{2m-3}, we let
$$R=\CC[x_1^1,\ldots, x_n^1, x_1^2,\ldots x_n^2, x_1^3,\ldots,x_n^3].$$
We can identify $Y_p$ with the free module over $R$ with generators 
$$X_p=\{dx_{\alpha_1}\wedge\cdots\wedge dx_{\alpha_{2p}}\},$$ and Hilbert series $h_{X_p}=\sigma_{2p}$.   We let $J_j$, for $j =1,2$, denote the ideal generated by $X^j_i:=x^j_i-x^{j+1}_i, i=1,\ldots n$, and let $J=J_1^3 + J_2^2$.  Then, Lemmas \ref{cor1} and \ref{cor2} imply that $JY_p$ is in the kernel of $f_3$.  As $J$ is $W_n$ invariant, we have a surjection of $W_n$ modules $f_3:Y_p/JY_p\to f_3(Y_p)\subset B_3$.  We compute:

$$h_{Y_p/JY_p}=h_{R/J}h_{X_p}=\frac{(1+\displaystyle \sum t_i+\displaystyle\sum_{i\leq j}t_it_j)(1+\displaystyle\sum t_i)\sigma_{2p}}{(1-t_1)\cdots(1-t_n)}.$$ Thus $h_{Y_p/JY_p}\cdot(1-t_1)\cdots(1-t_n) $ is a polynomial of degree less than or equal to $2p+3$, so that the maximal size for $\lambda$ which can appear in each $Y_p/JY_p$ is $2p+3$.  Thus only $Y_{k-1}$ can contribute to multiplicity of $(2,1^{n-1})$.

So we consider the image $f_3(Y_{k-1}) \subset B_3$.  Scalars in the outer slot are sent to zero, and we can view the inner two slots naturally as the degree $n$ subspace of $B_2\cong\Omega_{ex}^{ev,\geq 2}$. We can thus write $f_3(Y_{k-1})$ as a quotient of
$$M = (\oplus_{j\ge 1}S^jV)\otimes \Lambda^n V\otimes (\oplus_{m\ge 0} S^mV).$$
The multiplicity of the $\mathfrak{gl}_n$ module $(2,1^{n-1})$ in $M$ is equal to the multiplicity
of $(1,0^{n-1})$ in $(\Lambda^nV)^*\ot M = (\oplus_{j\ge 1}S^jV)\otimes (\oplus_{m\ge 0} S^mV)$, which is clearly one.
\end{proof}

\begin{cor}A cyclic generator of $(2,1^{n-1})$ in $B_3(A_n)$ is $$v_n=[x_1,[x_1,x_2]\cdots[x_{2k-1},x_{2k}]].$$\end{cor}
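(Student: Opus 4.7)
The plan is to show $v_n$ is a nonzero $\mathfrak{gl}_n$-highest weight vector of weight $(2,1^{n-1})$ in $B_3(A_n)$, and then to invoke the multiplicity bound from Lemma \ref{multbound}.

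First, I interpret $v_n$ via the Feigin--Shoikhet isomorphism of Theorem \ref{isom}. The product $P := [x_1,x_2][x_3,x_4]\cdots[x_{2k-1},x_{2k}]$ lies in $L_2 \subset A_n$, and its class in $B_2=L_2/L_3$ corresponds under $\xi$ to a nonzero scalar multiple of the top exterior form $dx_1\wedge dx_2\wedge\cdots\wedge dx_n$. This follows from $\xi(2\,dx_i\wedge dx_j) = [x_i,x_j]$ together with the identity $(dx_i\wedge dx_j)*(dx_k\wedge dx_l) = dx_i\wedge dx_j\wedge dx_k\wedge dx_l$ in $\Omega^{ev}_*$ (since $d(dx_i\wedge dx_j)=0$). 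Hence $v_n = [x_1,P]$ is a well-defined element of $B_3(A_n)=L_3/L_4$.

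Next, I check that $v_n$ has $\mathfrak{gl}_n$-weight $(2,1^{n-1})$ and is a highest weight vector. The weight statement is immediate by inspection. For the raising operators $E_{ij}=x_i\partial_j$ with $i<j$: since $j>1$, they annihilate the outer $x_1$; and they act on $P$ as derivations of $A_n$, substituting $x_i$ for $x_j$ in exactly one inner commutator. Under the $\xi^{-1}$ identification this substitution introduces a repeated $dx_i$ in the top wedge, yielding zero in $B_2$. Therefore $E_{ij}(P) \in L_3$, so $[x_1,E_{ij}(P)] \in L_4$, and $E_{ij}(v_n)=0$ in $B_3$.

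The crucial and hardest step is to show $v_n \neq 0$ in $B_3(A_n)$. Following the strategy outlined in the introduction to Section \ref{PavelConjPf}, this is accomplished by representing the generators of $A_n$ in a judiciously chosen auxiliary algebra --- for instance a tensor product of a polynomial algebra with a Clifford algebra, where each $[x_i,x_j]$ maps to a nonzero anticommutator and $P$ to a top Clifford monomial --- and then computing the image of $v_n$ directly to see it is nonzero. The main obstacle is precisely this step: most natural quotients of $A_n$ collapse $L_3$ onto $L_4$, so the auxiliary algebra must be chosen delicately to preserve the relevant $L_3/L_4$ information on the $(2,1^{n-1})$-isotypic component. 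Once $v_n \neq 0$ is established, Lemma \ref{multbound} forces $(2,1^{n-1})$ to occur with multiplicity exactly one in $B_3(A_n)$, and $v_n$ --- as a highest weight vector of the appropriate weight --- lies in and cyclically generates this unique copy under the $W_n$-action.
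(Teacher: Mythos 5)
Your proposal takes a genuinely different and longer route than the paper's, and the extra mile it tries to go contains a flaw. The paper's proof of this corollary is one line: it applies $f_3$ to the explicit highest-weight vector $x_1\otimes x_1\wedge\cdots\wedge x_n\otimes 1$ of the unique copy of $(2,1^{n-1})$ inside the module $M=(\oplus_{j\ge 1}S^jV)\otimes\Lambda^n V\otimes(\oplus_{m\ge0}S^mV)$ constructed in the proof of Lemma~\ref{multbound}, and simply reads off $v_n$. Since $M$ surjects onto $f_3(Y_{k-1})\subset B_3$ as $W_n$-modules and only $Y_{k-1}$ contributes to $(2,1^{n-1})$, the image of that generator must be the generator of the corresponding isotypic piece of $B_3$ (or zero). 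Your direct $\mathfrak{gl}_n$-highest-weight verification of $v_n$ (weight $(2,1^{n-1})$, killed by $E_{ij}$ for $i<j$, implicitly killed by the $\partial_i$) is correct and is a reasonable self-contained alternative, but it re-derives by hand information that the $M$ construction already packages.

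The more substantive issue is your third step. The non-vanishing $v_n\neq 0$ is \emph{not} part of this corollary in the paper: it is deferred to the subsequent Proposition in Step three. By folding it in, you have assigned yourself a proof obligation that you then do not discharge --- you only gesture at a strategy. Worse, the specific construction you float would not work: in ``a tensor product of a polynomial algebra with a Clifford algebra,'' the commutator $[a,b]$ of two polynomial variables is identically zero, so the image of $v_n$ would collapse. The paper uses the \emph{free} (noncommutative) algebra $A$ on two generators $a,b$, tensored with an exterior algebra $E$, precisely so that $[a,b]$ survives in $[A,A]/[A,A[A,A]]$; the computation $[x_1,[x_1,x_2]\cdots[x_{2k-1},x_{2k}]]=2^{k+1}[a,b]\otimes\zeta_0\wedge\cdots\wedge\zeta_{2k}$ then lands in a graded piece shown to be nonzero. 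If you intend your write-up to cover the non-vanishing, you must replace the polynomial algebra by a genuinely noncommutative one and actually run the computation; otherwise you should state the corollary as conditional on the existence of $(2,1^{n-1})$ in $B_3$, matching the paper's logical structure.
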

\begin{proof} Apply $f_3$ to the generator $x_1\ot x_1\wedge\cdots\wedge x_n$ of $(2,1^{n-1})$ in $M$.
\end{proof}

\begin{cor} The multiplicity of $(2,1^{2i-1},0^{n-2i})$ in $B_3(A_n)$ is at most one.\end{cor}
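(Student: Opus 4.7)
The plan is to reduce the general case to Lemma \ref{multbound} (which covers $2i=n$) by specialization of Hilbert series in the last variable. First, I would exploit the multigrading on $A_n$ to identify the multi-degree $(d_1,\ldots,d_{n-1},0)$ piece of $B_3(A_n)$ --- the elements involving none of $x_n$ --- with $B_3(A_{n-1})$. This gives the Hilbert-series identity $h_{B_3(A_n)}(t_1,\ldots,t_{n-1},0)=h_{B_3(A_{n-1})}(t_1,\ldots,t_{n-1})$.

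Next, I would translate Hilbert series into Jordan--H\"older multiplicities via characters. By Lemma \ref{simplemult}, every JH factor $\mathcal{F}_\mu$ of $B_3(A_n)$ has $\mu_1\geq 2$, so by the Proposition in Section \ref{pre} its character is $s_\mu/\prod_i(1-t_i)$, where $s_\mu$ is the corresponding Schur polynomial. Linear independence of Schur polynomials then forces the JH multiplicity $m(\mu,n)$ of $\mathcal{F}_\mu$ in $B_3(A_n)$ to equal the coefficient of $s_\mu$ in the polynomial $h_{B_3(A_n)}\cdot\prod_i(1-t_i)$. Setting $t_n=0$ in this expansion and using $s_\mu(t_1,\ldots,t_{n-1},0)=s_{(\mu_1,\ldots,\mu_{n-1})}(t_1,\ldots,t_{n-1})$ when $\mu_n=0$ and $0$ otherwise, a second appeal to linear independence of Schur polynomials in $n-1$ variables yields
\[m((\mu_1,\ldots,\mu_{n-1},0),\,n)=m((\mu_1,\ldots,\mu_{n-1}),\,n-1)\]
for every partition $(\mu_1,\ldots,\mu_{n-1})$ of length at most $n-1$.

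Applied $n-2i$ times to $\lambda=(2,1^{2i-1},0^{n-2i})$, this identity reduces its multiplicity in $B_3(A_n)$ to the multiplicity of $(2,1^{2i-1})$ in $B_3(A_{2i})$; since $2i$ is even, Lemma \ref{multbound} (with $k=i$) bounds the latter by one. The only subtle point is that the character-to-multiplicity translation requires all JH factors to be non-special, i.e.\ of the form $\mathcal{F}_\mu$ with $\mu_1\geq 2$ rather than the sporadic closed-form modules $\mathcal{F}_{(1^k,0^{n-k})}$; this is precisely the content of Lemma \ref{simplemult}, so the argument goes through without further obstacle.
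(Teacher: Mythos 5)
Your proof is correct and takes essentially the same approach as the paper: both reduce the general case to Lemma \ref{multbound} by "restricting to $A_{2i}$," which amounts to repeatedly specializing the Hilbert series at $t_n=0$ and comparing Schur expansions. The paper states this reduction in a single sentence, whereas you have spelled out the character-theoretic bookkeeping that justifies it (with the minor caveat that Lemma \ref{simplemult} literally excludes only $(1^k,0^{n-k})$ factors rather than asserting $\mu_1\geq 2$ outright, though this is exactly what is needed for the Proposition in Section \ref{pre} to apply).
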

\begin{proof} If we assume to the contrary that $(2,1^{2i-1},0^{n-2i})$ appears with multiplicity greater than one, then it follows that $(2,1^{2i-1})$ occurs in $B_3(A_{2i})$ with multiplicity greater than one, contradicting Lemma \ref{multbound}.
\end{proof}

\subsection{Step three.}
Fix some $k\in \NN$, let $A$ be any algebra, let $E$ denote the exterior algebra in generators $\zeta_1,\ldots \zeta_k$, and let $B=A\ot E$.  We denote by $E^i,E_+,E_-,$ and $E_+^{\geq j}$, and $E_-^{\geq j}$ the $i$-th graded component, even part, odd part, and the even and odd parts of degree at least $j$, respectively.

\begin{lem} We have
\begin{align*}
 [B,B]=&[A,A]\otimes (E^0\oplus E_-) \oplus A\otimes E_+^{\ge 2}.\\
 [B,[B,B]]=&[A,[A,A]]\otimes (E^0\oplus E^1)\oplus A[A,A]\otimes E_+^{\ge
2}\oplus [A,A]\otimes E_-^{\ge 3}.\\
 [B,[B,[B,B]]]=&[A,[A,[A,A]]\otimes (E^0\oplus E^1)\oplus
([A,A[A,A]]+A[A,[A,A]])\otimes E^2\\ &\oplus [A,A[A,A]]\otimes E_-^{\ge 3}\oplus
A[A,A]\otimes E_+^{\ge 4}. \nonumber
\end{align*}
\end{lem}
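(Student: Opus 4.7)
The plan is to analyze brackets in $B = A \otimes E$ by splitting according to the parity of the $E$-factors. Since $E$ is super-commutative, for $a \otimes e, a' \otimes e' \in B$ we have
\[
[a \otimes e, a' \otimes e'] = \begin{cases} [a, a'] \otimes ee' & \text{if $e$ or $e'$ is even,} \\ \{a, a'\} \otimes ee' & \text{if both are odd,} \end{cases}
\]
where $\{a, a'\} = aa' + a'a$. Combined with the product rules $E_+ \cdot E_+ = E_+$, $E_+ \cdot E_- = E_-$, and $E_- \cdot E_- = E_+^{\geq 2}$ (the last because the shortest product of two odd elements has $E$-degree $2$), and the observation that $\{A, A\} = A$ via $\{a, 1\} = 2a$, the three parity cases immediately produce the three direct summands of $[B, B]$. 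The direct sum decomposition is automatic from the $E$-grading. The reverse inclusion follows by exhibiting each summand as a bracket: scalars and odd elements pair with $1 \otimes 1$ to give commutators, while an element $a \otimes e$ with $e \in E_+^{\geq 2}$ is produced by factoring $e = \zeta_i \cdot e''$ into two odd factors and bracketing $a \otimes \zeta_i$ with $1 \otimes e''$.

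For $[B, [B, B]]$ I would bracket $B$ with each of the three summands of $[B,B]$, yielding six parity cases, and organize the output by $E$-degree. A crucial ingredient is the identity $A \cdot L_k = L_k \cdot A$ valid in any associative algebra, where $L_k$ is a term of the lower central series: since $[A, L_k] \subset L_{k+1} \subset L_k$, we have $A \cdot L_k \subset L_k \cdot A + L_k \subset L_k \cdot A$, and symmetrically. The same reasoning shows $\{A, L_k\} = A \cdot L_k$ for $k \geq 2$ (using $\{a,b\} = 2ab - [a,b]$ with $[a,b] \in L_{k+1}$, and $1 \in A$). With these identities, each $E$-degree collapses: for example in $E$-degree $2$ the contributions $[A, [A, [A, A]]]$ (from even-even brackets) and $\{A, [A, A]\}$ (from odd-odd) both lie inside $A[A, A]$, giving the claimed summand $A[A, A] \otimes E_+^{\geq 2}$, while in $E$-degree $\geq 3$ odd, only the $[A,A]$ piece survives after all inclusions.

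The computation of $[B, [B, [B, B]]]$ proceeds by bracketing $B$ with the four summands of $[B, [B, B]]$ and iterating the same analysis. Expressions like $A[A, [A, A]]$, $[A, A[A, A]]$, and $[A, [A, [A, A]]]$ arise and are reduced using the $L_k A = A L_k$ principle; for instance in $E$-degree $\geq 4$ every contribution is absorbed into $A[A, A]$, while in $E$-degree $2$ only $A[A, [A,A]] + [A, A[A,A]]$ survives, since $[A,[A,[A,A]]] \subset A[A,[A,A]]$. I expect the principal difficulty to be organizational rather than conceptual: roughly a dozen subcases per formula must be carefully tabulated and simplified, but the $E$-grading makes the direct-sum structure automatic, so the work reduces to verifying the $A$-side identity in each $E$-degree.
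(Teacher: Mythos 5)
Your proposal is correct and is essentially the same "direct computation using skew commutativity of $E$" that the paper invokes without spelling out. The two key ingredients you identify --- the parity-dependent bracket formula (commutator vs.\ anticommutator on the $A$-side depending on the $E$-parities), and the cleanup identities $A L_k = L_k A$ and $\{A, L_k\} = A L_k$ that collapse the mixed contributions in each $E$-degree --- are exactly what is needed, and your case analysis in $E$-degree $0,1,2$, $E_+^{\ge 4}$, $E_-^{\ge 3}$ reproduces the stated right-hand sides once the inclusions $[A,[A,[A,A]]] \subset A[A,[A,A]]$, $[A,A[A,A]] \subset A[A,A]$, etc.\ are applied. The one slightly compressed spot is the $E_-^{\ge 3}$ component of the third formula, where you should note explicitly that $L_3 = [A,[A,A]] \subset [A, A[A,A]]$ (using $[A,A] \subset A[A,A]$), so that the terms $[A,[A,A]]$ and $[A,[A,[A,A]]]$ coming from the odd slots are indeed absorbed into $[A,A[A,A]]$; this is implicit in your sketch but worth stating since it is the only place a reverse containment of that shape is required.
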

\begin{proof} A direct computation using the skew commutativity of $E$.
\end{proof}

\begin{cor} We have
\begin{align*}
[B,[B,B]]/[B,[B,[B,B]]]=&([A,[A,A]]/[A,[A,[A,A]]])\otimes
(E^0\oplus E^1)\\&\oplus (A[A,A]/([A,A[A,A]]+A[A,[A,A]]))\otimes E^2\\&\oplus
([A,A]/[A,A[A,A]])\otimes E_-^{\ge 3}.
\end{align*}
\end{cor}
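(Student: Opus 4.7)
The plan is to deduce the corollary directly from the previous lemma by a bookkeeping exercise. Both decompositions given in the lemma for $[B,[B,B]]$ and $[B,[B,[B,B]]]$ are compatible with the grading on $E$ by exterior degree, so the quotient can be computed summand by summand once we regroup the pieces according to the graded components of $E$ that appear.

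First I would rewrite each of the two decompositions as a direct sum indexed by the full grading of $E$. For $[B,[B,B]]$, the three subspaces $[A,[A,A]]\otimes (E^0\oplus E^1)$, $A[A,A]\otimes E_+^{\ge 2}$, and $[A,A]\otimes E_-^{\ge 3}$ involve disjoint $E$-components; splitting $E_+^{\ge 2}=E^2\oplus E_+^{\ge 4}$ gives a decomposition labelled by $E^0,E^1,E^2$, the odd part $E_-^{\ge 3}$, and the even part $E_+^{\ge 4}$. Similarly for $[B,[B,[B,B]]]$: the four subspaces already use these same pieces, with $A[A,A]\otimes E_+^{\ge 4}$ isolating the top even stratum.

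Next I would verify the elementary inclusions $[A,[A,[A,A]]]\subseteq [A,[A,A]]$, $\;[A,A[A,A]]+A[A,[A,A]]\subseteq A[A,A]$, and $[A,A[A,A]]\subseteq [A,A]$, so that each stratum of the denominator really is a subspace of the corresponding stratum of the numerator. The only inclusion with any content is $[A,A[A,A]]\subseteq A[A,A]$, which follows from the identity $[a,b[c,d]]=ab[c,d]-b[c,d]a$ together with the observation that $b[c,d]a=ba[c,d]+b[[c,d],a]$ and $[[c,d],a]\in L_3\subseteq [A,A]$.

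With these inclusions in hand, the quotient splits as a direct sum of four pieces, one for each stratum of $E$: on $E^0\oplus E^1$ we obtain $[A,[A,A]]/[A,[A,[A,A]]]$, on $E^2$ we obtain $A[A,A]/([A,A[A,A]]+A[A,[A,A]])$, on $E_-^{\ge 3}$ we obtain $[A,A]/[A,A[A,A]]$, and on $E_+^{\ge 4}$ we obtain $A[A,A]/A[A,A]=0$. Collecting the nonzero summands gives the stated formula. There is no real obstacle here; the only thing to be careful about is ensuring that the regrouping of $E_+^{\ge 2}$ as $E^2\oplus E_+^{\ge 4}$ matches precisely the corresponding pieces in $[B,[B,[B,B]]]$, so that the top even stratum cancels cleanly.
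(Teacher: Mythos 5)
Your proof is correct and takes the same approach the paper has in mind: the paper gives no argument for this corollary beyond the preceding lemma, treating it as an immediate bookkeeping consequence. Your stratification by the $E$-grading, the elementary inclusions (the nontrivial one, $[A,A[A,A]]\subseteq A[A,A]$, can also be seen from $[A,A]\subseteq A[A,A]$ since $A$ is unital), and the cancellation of the $E_+^{\ge 4}$ stratum reproduce exactly the intended reasoning.
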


\begin{prop}  The generator $v_n=[x_1,[x_1x_2]\cdots[x_{2k-1},x_{2k}]]$ of $(2,1^{n-1})$ is nonzero in $B_3(A_n)$.
\end{prop}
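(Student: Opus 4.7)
My plan is to apply the preceding Corollary to a carefully chosen algebra homomorphism $\phi: A_n \to A \otimes E$, so that $\phi(v_n)$ is manifestly non-zero in $B_3(A \otimes E)$. Since $\phi$ respects the lower central series, this implies $v_n \neq 0$ in $B_3(A_n)$.

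A natural first attempt is to take $A = A_{n-2}$, $E = \Lambda[\zeta_1, \zeta_2]$, and define $\phi(x_i) = x_i \otimes 1$ for $1 \leq i \leq n-2$, together with $\phi(x_{n-1}) = 1 \otimes \zeta_1$ and $\phi(x_n) = 1 \otimes \zeta_2$. A direct computation using $[1\otimes\zeta_1, 1\otimes\zeta_2] = 2\otimes\zeta_1\zeta_2$ (together with the fact that $x_i\otimes 1$ acts trivially on the exterior tensor factor) yields
\[\phi(v_n) = 2 v_{n-2} \otimes \zeta_1\zeta_2,\]
which lives in the $E^2$-component of $[B,[B,B]]$. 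By the Corollary, the $E^2$-coefficient in $B_3(A\otimes E)$ is $A[A,A]/([A, A[A,A]] + A[A,[A,A]])$, so it would suffice to verify that $v_{n-2}$ represents a non-zero class there.

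The main obstacle I anticipate is that $v_{n-2}$, being $[x_1, Z]$ with $Z = [x_1,x_2]\cdots [x_{n-3},x_{n-2}]$ a product of commutators, can be expanded via repeated application of the Leibniz rule into a sum of terms each of the form $\alpha \cdot [x_1, [x_{2i-1}, x_{2i}]] \cdot \beta$, which lies in $A[A,[A,A]]$ (after absorbing the trailing $\beta$ into the leading $\alpha$ modulo $L_4$). Hence $v_{n-2}$ is killed in the $E^2$-coefficient and the naive map gives zero. To circumvent this, I would refine $\phi$ by introducing correction terms --- for instance $\phi(x_i) = x_i \otimes 1 + c_i \otimes \zeta$ for suitable $c_i \in A_{n-2}$ and a single extra exterior generator $\zeta$ --- chosen so that $\phi(v_n)$ acquires a non-vanishing projection onto the $B_3(A) \otimes (E^0 \oplus E^1)$ component. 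Since the coefficient of that component is $B_3(A_{n-2})$, an induction on $n$ (with base case $n=2$ supplied by \cite{DKM}) would then close the argument, provided the correction terms can be arranged to yield a non-zero scalar multiple of $v_{n-2} \otimes \zeta$ in the $E^1$-piece.
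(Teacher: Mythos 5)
You set up the right framework---substituting into $A\otimes E$ and reading off which coefficient in the Corollary's decomposition survives---and you correctly diagnose why the obvious substitution fails: the image $2\,v_{n-2}\otimes\zeta_1\zeta_2$ lands in the $E^2$-coefficient $A[A,A]/([A,A[A,A]]+A[A,[A,A]])$, where $v_{n-2}\in[A,[A,A]]\subset A[A,[A,A]]$ vanishes. That observation is sound and shows genuine understanding of the structure.

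However, the proposed repair is only a sketch, and this is a real gap. Introducing one extra odd generator $\zeta$ and setting $\phi(x_i)=x_i\otimes 1+c_i\otimes\zeta$ produces a flood of cross terms (with signs depending on the parity of the exterior factors you're commuting past), and you never verify that the $E^1$-coefficient of $\phi(v_n)$ is a nonzero multiple of $v_{n-2}$---that would itself require computing in $B_3(A_{n-2})$, which is exactly what you are trying to avoid by induction. It is not at all clear the sign bookkeeping works out, and the argument as written is not a proof. The paper takes a cleaner, non-inductive route: instead of keeping most generators in the polynomial factor, it sends \emph{almost all} of them into the exterior algebra. Concretely, take $A=A_2$ free on $a,b$, take $E=\Lambda[\zeta_0,\ldots,\zeta_{2k}]$, and set $x_i=\zeta_i$ for $i\geq 2$ but $x_1=a\zeta_0+b\zeta_1$. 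The whole expression then collapses to $2^{k+1}[a,b]\otimes\zeta_0\wedge\cdots\wedge\zeta_{2k}$, which sits in the $E_-^{\geq 3}$ block with coefficient $[A,A]/[A,A[A,A]]$; there $[a,b]$ survives for trivial degree reasons (degree $2$ versus degree $\geq 3$). This choice pushes the nonvanishing question into the easiest of the three blocks in the Corollary, avoids any induction, and turns the whole thing into a one-line computation. Your instinct that the $E^2$-block is the wrong place to land is exactly the right one; the resolution is to aim for the $E_-^{\geq 3}$-block rather than the $E^0\oplus E^1$-block.
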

\begin{proof}
Clearly it suffices to find some algebra $C$ and elements $x_1,\ldots x_{2k}$ such that the expression defining $v_n$ is not in $L_4(C)$.  We let $A$ be the free algebra in two generators $a,b,$ let $E$ be the exterior algebra in generators $\zeta_0,\ldots,\zeta_{2k}$,and let $B=A\otimes E$.
We set $x_i=\zeta_i$ for $i=2,\ldots,2k,$ and $x_{1}=a\zeta_{0}+b\zeta_{1}$. Then a direct
computation shows:
$$[x_1,[x_1x_2]\cdots[x_{2k-1}x_{2k}]]=2^{k+1}[ab]\otimes \zeta_0\wedge\cdots\wedge\zeta_{2k}.$$

 By the corollary, this is nonzero in $[B,[B,B]]/[B,[B,[B,B]]]$, as its component in $([A,A]/[A,A[A,A]])\otimes E_-^{\ge 3}$ is non-zero.  The proposition is proved.\end{proof}

\begin{cor} The Jordan-H\"older series of $B_3(A_n)$ is $\{(2,1^{2i-1},0^{n-2i})\}_{1\leq i\leq\lfloor\frac{n}{2}\rfloor}$, each appearing with multiplicity one. 
\end{cor}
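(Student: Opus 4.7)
The plan is to combine the three preceding steps, which together already give the upper bounds; what remains is an existence argument for each composition factor. The concluding corollary of Step 1 constrains the composition factors of $B_3(A_n)$ to the list $\{\mathcal{F}_{(2,1^{2i-1},0^{n-2i})}\}_{1\leq i\leq \lfloor n/2\rfloor}$, and the concluding corollary of Step 2 bounds each multiplicity by one. It therefore suffices to exhibit each such $\mathcal{F}_\lambda$ in the Jordan-H\"older series.

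For existence, I would apply the proposition of Step 3 to the subalgebra $A_{2i}\subset A_n$ generated by $x_1,\ldots,x_{2i}$, obtaining a nonzero vector
$$v_{2i}=[x_1,[x_1,x_2][x_3,x_4]\cdots[x_{2i-1},x_{2i}]]\in B_3(A_{2i}).$$
The algebra retraction $\pi\colon A_n\to A_{2i}$ sending $x_j\mapsto 0$ for $j>2i$ is a ring homomorphism, hence carries $L_4(A_n)$ into $L_4(A_{2i})$; since $\pi|_{A_{2i}}=\mathrm{id}$, any hypothetical $v_{2i}\in L_4(A_n)$ would give $v_{2i}=\pi(v_{2i})\in L_4(A_{2i})$, contradicting its nontriviality in $B_3(A_{2i})$. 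Thus $v_{2i}$ is nonzero in $B_3(A_n)$.

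The final step is to identify which composition factor carries $v_{2i}$. By inspection $v_{2i}$ has $\mathfrak{gl}_n$-weight $(2,1^{2i-1},0^{n-2i})$; it is annihilated by every raising operator $x_p\partial_q$ with $p<q$, for $q\leq 2i$ by the $\mathfrak{gl}_{2i}$-highest-weight property exploited in Step 2, and for $q>2i$ trivially since $v_{2i}$ involves only $x_1,\ldots,x_{2i}$. Decomposing $\mathcal{F}_\nu\cong \bigoplus_{d\ge 0}S^d V\otimes V_\nu$ as a $\mathfrak{gl}_n$-module and applying Pieri's rule, one checks that for $\nu=(2,1^{2j-1},0^{n-2j})$ with $j<i$, the irreducible $V_{(2,1^{2i-1},0^{n-2i})}$ does not occur in $\mathcal{F}_\nu$: passing from $\nu$ to the target weight would require stacking $2(i-j)\geq 2$ boxes in column one, which Pieri forbids. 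Hence the $\mathfrak{gl}_n$-highest weight vector $v_{2i}$ must come from $\mathcal{F}_{(2,1^{2i-1},0^{n-2i})}$ itself, which therefore appears with multiplicity exactly one in the Jordan-H\"older series.

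The main obstacle in this plan is the last weight-theoretic step: distinguishing which of the allowed composition factors actually supports $v_{2i}$. Everything else is either recorded in the three preceding steps or follows from the functoriality of the lower central series under the retraction $\pi$.
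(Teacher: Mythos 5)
Your proposal is correct and follows essentially the same three-stage structure as the paper: Step~1 constrains the list of possible composition factors, Step~2 caps each multiplicity at one, and the exterior-algebra construction of Step~3 supplies a nonzero vector $v_{2i}$. Your retraction argument ($\pi\colon A_n\to A_{2i}$, $x_j\mapsto 0$ for $j>2i$) is the correct way to transport the nonvanishing of $v_{2i}$ from $B_3(A_{2i})$ to $B_3(A_n)$, and the paper uses this implicitly both here and in the corollary following Lemma~\ref{multbound}.

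The one place where you diverge from the paper is the final identification step (showing that $v_{2i}$ actually witnesses the factor $\mathcal{F}_{(2,1^{2i-1},0^{n-2i})}$ rather than a lower one). You argue via the $\mathfrak{gl}_n$-highest-weight property of $v_{2i}$ together with Pieri's rule on $S^dV\otimes V_\nu$. The paper's own version of this step appears in Step~4 (the directness proposition): it observes that $\partial_p v_{2i}=0$ for all $p$, so $v_{2i}$ lies in the bottom graded piece $V_\nu$ of whatever $\mathcal{F}_\nu$ contains it, forcing $|\nu|=2i+1$ and hence $\nu=(2,1^{2i-1},0^{n-2i})$; this is then combined with the Hilbert-series bound of Lemma~\ref{multbound}. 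Both arguments are sound. Yours buys a purely $\mathfrak{gl}_n$-representation-theoretic criterion that does not invoke the Hilbert-series estimate a second time; the paper's buys brevity by reusing the existing bound and the obvious identity $\partial_p v_{2i}=0$. One small caveat: your assertion that $v_{2i}$ is annihilated by all raising operators $x_p\partial_q$, $p<q$, deserves one more sentence of justification --- it is clearest to note that under $\xi$ the inner bracket $[x_1,x_2]\cdots[x_{2i-1},x_{2i}]$ becomes a scalar multiple of $dx_1\wedge\cdots\wedge dx_{2i}$, so $x_p\partial_q$ (for $p<q\le 2i$) replaces $dx_q$ by $dx_p$ and kills the wedge, while the outer $x_1$ contributes nothing since $\partial_q x_1=0$ for $q\ge 2$.
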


\subsection{Step four.}
\begin{prop} Each $(2,1^{2i-1},0^{n-2i})$ is a submodule, so the sum in Theorem \ref{PavelConj} is direct. 
\end{prop}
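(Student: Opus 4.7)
The plan is to exhibit each $\mathcal{F}_{(2,1^{2i-1},0^{n-2i})}$ as a $W_n$-submodule of $B_3(A_n)$ by producing an explicit $\mathfrak{gl}_n$-highest weight vector of weight $(2,1^{2i-1},0^{n-2i})$ annihilated by all $\partial_j$. Because the preceding corollary establishes that the composition factors are pairwise non-isomorphic simples, each with multiplicity one, distinct such submodules intersect trivially; their sum is thus automatically direct, and by Hilbert-series considerations it fills all of $B_3(A_n)$. For each $1\le i\le\lfloor n/2\rfloor$ I would take
$$v^{(i)}:=[x_1,[x_1,x_2][x_3,x_4]\cdots[x_{2i-1},x_{2i}]]\in B_3(A_n).$$

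Three properties then need to be verified. First, $v^{(i)}\ne 0$: the retraction $A_n\twoheadrightarrow A_{2i}$ sending $x_j\mapsto 0$ for $j>2i$ preserves the lower central series, inducing an injection $B_3(A_{2i})\hookrightarrow B_3(A_n)$, and the preceding Proposition applied to $A_{2i}$ gives non-vanishing. Second, $v^{(i)}$ has $\mathfrak{gl}_n$-weight $(2,1^{2i-1},0^{n-2i})$ (immediate from multidegree) and is a highest weight vector: for $j<k$, the action of $e_{jk}=x_j\partial_k$ factors through the Lie derivative on $dx_1\wedge\cdots\wedge dx_{2i}$ (the image under the Feigin-Shoikhet map $\xi$ of the inner factor), vanishing either by antisymmetry of the wedge (when $j\in\{1,\ldots,2i\}$) or trivially (otherwise). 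Third, by the same Lie-derivative computation $\mathcal{L}_{\partial_j}(dx_1\wedge\cdots\wedge dx_{2i})=0$, and combined with the derivation property of the $W_n$-action together with $\partial_j(x_1)=\delta_{j1}\in\CC$ (which brackets trivially with $B_2$), we conclude $\partial_jv^{(i)}=0$.

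Given these three properties, by the universal property of tensor field modules for $\lambda_1\ge 2$ (equivalently, by interpreting $\mathcal{F}_\lambda$ via parabolic coinduction from $V_\lambda$ as a module for $\mathfrak{gl}_n\oplus\mathrm{span}(\partial_j)$ with the $\partial_j$'s acting trivially, combined with Rudakov's irreducibility result \cite{ANR}), the vector $v^{(i)}$ determines a nonzero $W_n$-map $\mathcal{F}_{(2,1^{2i-1},0^{n-2i})}\to B_3(A_n)$ sending the canonical generator to $v^{(i)}$; this map is injective by irreducibility of $\mathcal{F}_\lambda$, exhibiting $\mathcal{F}_{(2,1^{2i-1},0^{n-2i})}$ as a submodule. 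The main obstacle lies in this final identification: to secure that the cyclic $W_n$-module $N_i:=U(W_n)v^{(i)}$ is $\mathcal{F}_{(2,1^{2i-1},0^{n-2i})}$ and not some extension involving $\mathcal{F}_{(2,1^{2k-1},0^{n-2k})}$ for $k>i$. Via PBW, $N_i$ is concentrated in degree $\ge 2i+1$, which restricts the possible extra composition factors; a Pieri-rule analysis, showing that the $\mathfrak{gl}_n$-isotype $V_{(2,1^{2k-1},0^{n-2k})}$ for $k>i$ does not appear in the $\mathfrak{gl}_n$-decomposition of $\mathcal{F}_{(2,1^{2i-1})}$, helps rule them out.
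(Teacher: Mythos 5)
Your opening moves coincide with the paper's: take $v^{(i)}=[x_1,[x_1,x_2]\cdots[x_{2i-1},x_{2i}]]$, note $\partial_j v^{(i)}=0$ for all $j$, and conclude via PBW that $N_i=U(W_n)v^{(i)}$ is concentrated in degree $\geq 2i+1$, so its composition factors lie among $\mathcal{F}_{(2,1^{2r-1},0^{n-2r})}$ with $r\geq i$. Both you and the paper must then rule out factors with $r>i$. That is where you diverge, and where your argument has a genuine gap.

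You propose to rule out $r>i$ by a Pieri analysis on $\mathcal{F}_{(2,1^{2i-1},0^{n-2i})}$, observing that $V_{(2,1^{2r-1},0^{n-2r})}$ does not appear among the $\mathfrak{gl}_n$-isotypes $S^dV\otimes V_\lambda$ of $\mathcal{F}_\lambda$. That observation is correct, but it does not constrain $N_i$. The universal property you invoke gives a surjection onto $N_i$ from the parabolic Verma module $M_\lambda=U(W_n)\otimes_{U(\mathfrak{p})}V_\lambda$ (with $\mathfrak{p}=\mathfrak{gl}_n\oplus\operatorname{span}(\partial_j)$), not from $\widetilde{\mathcal{F}}_\lambda$. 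As a $\mathfrak{gl}_n$-module, $M_\lambda\cong S(\mathfrak{n}_+)\otimes V_\lambda$ where $\mathfrak{n}_+=\bigoplus_{d\geq1}W_n[d]$ with $W_n[d]\cong S^{d+1}V\otimes V^*$; this is far larger than $\CC[x]\otimes V_\lambda$, and controlling which $\mathfrak{gl}_n$-isotypes it contains at each degree is a substantially harder plethysm problem than the Pieri computation you describe. So your Pieri remark, while true, simply fails to bound the $\mathfrak{gl}_n$-types of $N_i$ — you seem aware of this, given that you say the Pieri analysis "helps rule them out" rather than that it does.

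The paper's device for closing exactly this gap is the Hilbert-series bound already established in the proof of Lemma \ref{multbound}: $v^{(i)}$ lies in $f_3(Y_{i-1})$, which is a $W_n$-equivariant quotient of $Y_{i-1}/JY_{i-1}$, whose Hilbert series computation shows that every composition factor $\mathcal{F}_\lambda$ of $Y_{i-1}/JY_{i-1}$ — hence of $N_i\subset f_3(Y_{i-1})$ — satisfies $|\lambda|\leq 2i+1$. Combined with the $r\geq i$ bound, this pins $N_i\cong\mathcal{F}_{(2,1^{2i-1},0^{n-2i})}$. You should replace the Pieri step with this containment argument (or an equivalent upper bound on the $\mathfrak{gl}_n$-types of $N_i$ that is actually applicable to the module $N_i$ and not merely to $\mathcal{F}_\lambda$).
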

\begin{proof}
Let $v_k=[x_1,[x_1x_2]\cdots[x_{2k-1}x_{2k}]]\in B_3(A_n)$, and let $X_k$ be the submodule generated by $v_k$. Clearly, $\partial_iv_k=0$ for all $i$, so the JH series of $X_k$ involves only terms of the form $(2,1^{2r-1},0^{n-2k})$, where $r\geq k$. On the other hand, we saw in the proof of Lemma \ref{multbound} that this representation cannot involve $\mathcal{F}_\lambda$ with more than $2k+1$ boxes.  Thus, we must have $X_k=(2,1^{2k-1},0^{n-2k})$ as desired.
\end{proof}

Corollary \ref{BConj} can now be derived by counting the dimension of the graded component for the decomposition in Theorem \ref{PavelConj}.

\subsection{A geometric description of the bracket of $\bar{B}_1$ and  $B_2$.}
The isomorphism in Theorem \ref{PavelConj} allows us the following geometric description of the bracket map,
\begin{align*}[-,-]:&(\bar{B}_1/\CC) \ot B_2\to B_3.\\&a\ot b \mapsto [a,b].\end{align*}  

To begin, we identify $\bar B_1/\CC\cong\Omega_{ex}^{odd}$, and $B_2\cong\Omega_{ex}^{ev,\ge
2}$, as in \cite{FS}.  Also, we identify $B_3$ with the direct sum of Theorem \ref{PavelConj}, by sending
$[x_1,[x_1,x_2]\cdots[x_{2k-1},x_{2k}]]$
to $\pi(dx_1\otimes dx_1\wedge\cdots\wedge dx_{2k}),$ where $\pi:V\ot\Lambda^{2k}V\to (2,1^{2k-1},0^{n-2k})$ is the standard projection.\footnote{Here, to fix normalizations unambiguously, we define the $\mathfrak{gl}(V)$-modules $(2,1^{p-1},0,\ldots,0)$ as the unique submodules of the corresponding type in $V\otimes \Lambda^p V.$}

\begin{defn}Let $\psi_s: \Lambda^{s+1} V\otimes \Lambda^q V \to (2,1^{s+q-1},0,\ldots,0)$ be the unique (up to scaling) morphism of $\mathfrak{gl}_n$-modules:
$$\psi_s(v_0\wedge\cdots\wedge v_s\otimes b)=\sum_{i=0}^s (-1)^i \pi(v_i\otimes
(v_0\wedge\cdots\hat v_i\cdots\wedge v_s\wedge b))$$
\end{defn}

\begin{prop}  For $a\in \Omega_{ex}^{2p+1}, b\in \Omega_{ex}^{ev,\ge 2},$ the bracket map is given by
the formula:
$$[a,b]=\psi_{2p}(a\otimes b).$$
\end{prop}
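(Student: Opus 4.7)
\medskip\noindent\emph{Proof plan.}  The plan is to identify both sides as the unique (up to a scalar) $W_n$-equivariant map into the summand $(2,1^{2(p+q)-1},0^{n-2(p+q)})$ of $B_3$, and then to pin down the scalar using the calculation already performed in Step three.

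Both maps are $W_n$-equivariant: the bracket is equivariant since $W_n$ acts on $\bar B$ by graded derivations, and the formula $\psi_{2p}$ extends to a $W_n$-equivariant map $\Omega_{ex}^{2p+1}\otimes\Omega_{ex}^{2q}\to\mathcal{F}_{(2,1^{2p+2q-1},0^{n-2p-2q})}$ by applying $\psi_{2p}$ to the wedge factors and multiplying polynomial coefficients.  The $\ZZ^n$-multigrading inherited from $A$ begins at total weight $2p+2q+1$ on the source, which equals $|\lambda|$ exactly for the summand of $B_3$ indexed by $i=p+q$; by Theorem \ref{PavelConj} this summand is the irreducible $(2,1^{2(p+q)-1},0^{n-2(p+q)})$, appearing with multiplicity one.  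Hence the image of either map lands in this single summand.

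Next I would argue that the space of $W_n$-equivariant maps from $\Omega_{ex}^{2p+1}\otimes\Omega_{ex}^{2q}$ to this irreducible is at most one-dimensional.  Restriction to the constant (polynomial-degree-zero) subspace $\Lambda^{2p+1}V\otimes\Lambda^{2q}V$ produces a $\mathfrak{gl}_n$-equivariant map into the constant sections $V_{(2,1^{2p+2q-1},0^{n-2p-2q})}$ of the target, and the Littlewood--Richardson decomposition $\Lambda^{2p+1}V\otimes\Lambda^{2q}V\cong V_{(2,1^{2p+2q-1})}\oplus V_{(1^{2p+2q+1})}$ together with Schur's lemma shows this space is one-dimensional.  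This restriction is moreover injective on $\mathrm{Hom}$ groups: if a $W_n$-equivariant map $\psi$ vanishes on constants, then induction on the total multiweight shows $\psi\equiv 0$, using that each $\partial_i\in W_n$ strictly lowers weight, that $\psi$ commutes with $\partial_i$, and that the only elements of a tensor field module $\mathcal{F}_\lambda$ killed by every $\partial_i$ are its constant sections $V_\lambda$, which sit at total weight $|\lambda|$.

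Finally, the proportionality constant is fixed by a single evaluation.  For $p=0$, Step three shows that $v_k=[x_1,[x_1,x_2]\cdots[x_{2k-1},x_{2k}]]$ corresponds to $\pi(dx_1\otimes dx_1\wedge\cdots\wedge dx_{2k})=\psi_0(dx_1\otimes dx_1\wedge\cdots\wedge dx_{2k})$ under the chosen identification of $B_3$ with the direct sum (absorbing the Feigin--Shoikhet normalizations $x_i\leftrightarrow dx_i$ and $[x_i,x_j]\leftrightarrow 2\,dx_i\wedge dx_j$), which pins the scalar; for general $p$, one transports the formula via the $\mathfrak{gl}_n$-action to a weight vector of $V_{(2,1^{2p+2q-1})}\subset\Lambda^{2p+1}V\otimes\Lambda^{2q}V$.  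The main obstacle will be the rigidity step in the second paragraph---carefully executing the inductive propagation of $W_n$-equivariance from the constant symbols to all of $\Omega_{ex}^{2p+1}\otimes\Omega_{ex}^{2q}$, and in parallel verifying that the bracket is nonzero on each $(r,s)$-component so that the resulting scalar is nonzero and the formula is nontrivial.
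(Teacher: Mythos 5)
Your rigidity argument is a genuinely different route from the paper's.  The paper proves this proposition by induction on $p$, powered by Lemma~\ref{B3identity}, which rewrites $[x[y,z],[w,v]]$ modulo $L_4$ as a signed sum of brackets with a single generator in the outer slot, thereby transferring a factor $dy\wedge dz$ from the odd slot to the even slot and reducing to the base case $p=0$.  Your uniqueness step is essentially sound: a $W_n$-equivariant map $\Omega_{ex}^{2p+1}\otimes\Omega_{ex}^{2q}\to\mathcal{F}_{\lambda}$, $\lambda=(2,1^{2p+2q-1},0^{n-2(p+q)})$, is determined by its restriction to the lowest-degree (constant-coefficient) part $\Lambda^{2p+1}V\otimes\Lambda^{2q}V$, since the only elements of a tensor field module annihilated by every $\partial_i$ lie in minimal degree $|\lambda|$; and $V_\lambda$ occurs with multiplicity one there.  (Your displayed decomposition of $\Lambda^{2p+1}V\otimes\Lambda^{2q}V$ is correct only when $\min(2p+1,2q)\le 1$; in general one gets $\bigoplus_{c\ge 0}V_{(2^c,1^{2p+2q+1-2c},0,\ldots,0)}$, with extra summands once both exterior powers exceed one—but the multiplicity-one fact you need survives, so this is only a cosmetic error.)

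The serious gap is exactly the one you flag at the end, and the remedy you propose cannot close it.  ``Transport via the $\mathfrak{gl}_n$-action'' does not determine the constant $c_{p,q}$ in $[a,b]=c_{p,q}\,\psi_{2p}(a\otimes b)$ for $p\ge 1$ from the $p=0$ case: every element of $\mathfrak{gl}_n$, and indeed of $W_n$, preserves each summand $\Omega_{ex}^{2p+1}\otimes\Omega_{ex}^{2q}$ individually, so these are pairwise inequivalent modules mapping independently into the same irreducible piece of $B_3$, and the constants $c_{p,q}$ are a priori unrelated.  In particular nothing in your argument even excludes $c_{p,q}=0$ for some $p\ge 1$, and merely showing the bracket nonzero would still leave the asserted equality (with scalar exactly one, in the paper's normalization of $\psi_s$) unestablished.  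An explicit bracket computation on each $(p,q)$-component is unavoidable, and that is precisely what Lemma~\ref{B3identity}—which you never invoke—supplies in the paper's proof; you should incorporate it (or prove an analogue) to complete the $p\ge 1$ cases.
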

\begin{proof}
This follows immediately from Lemma 5.1 by induction on p.
\end{proof}
In particular, the proposition implies that the bracket map is induced from a fiberwise morphism of the corresponding vector bundles on $\mathbb{A}^n$.

\section{Decompositions}\label{decomps}
 Using the computational algebra system MAGMA \cite{BCP}, we were able to produce
 the bigraded Hilbert series of $B_{m}(A_{2})$ up to degree $12$, and the tri-graded Hilbert series for  $B_{m}(A_{3})$ up to degree $8$.  Combined with Theorem \ref{2m-3}, these results imply the following:
 
\begin{thm}\label{JH}  The Jordan-H\"{o}lder series of $B_{m}(A_{2})$ for $m=2,\ldots,7$ are
\begin{align*}
B_{2}(A_{2}) =& (1,1) \textrm{\cite{FS}}\\
B_{3}(A_{2}) =& (2,1) \textrm{\cite{DKM}}\\
B_{4}(A_{2}) =& (3,1)+(3,2) \textrm{\cite{DKM}}\\
B_{5}(A_{2}) =& (4,1)+(3,2)+(4,2)+(4,3)\\
B_{6}(A_{2}) =& (5,1)+(4,2)+(3,3)+2(5,2)+2(4,3)+(5,3)+(5,4)\\
B_{7}(A_{2}) =&
(6,1)+2(5,2)+2(4,3)+2(6,2)+3(5,3)+2(4,4)+2(6,3)+2(5,4)\\ &+(6,4)+(6,5)
\end{align*}
The Jordan-H\"{o}lder series of $B_{m}(A_3)$ for $m=2,\ldots,5$ are
\begin{align*}
B_{2}(A_{3}) &=(1,1,0)\textrm{\cite{FS}}\\
B_{3}(A_{3}) &=(2,1,0)\textrm{\cite{DE}}\\
B_{4}(A_{3}) &=(3,1,0)+(2,1,1)+(3,2,0)+(2,2,1) \quad\textrm{\emph{(conj. in \cite{FS})}}\\
B_{5}(A_{3}) &=
  (4,1,0)+(3,2,0)+(3,1,1)+(2,2,1)+(4,2,0)+(4,1,1)\\&+
	3(3,2,1)+(2,2,2)+(4,3,0)+(3,3,1) \end{align*}
\end{thm}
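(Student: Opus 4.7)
The plan is to combine Theorem \ref{2m-3}(2), which gives $|\lambda|\leq 2m-3$ for the Young diagrams $\lambda$ appearing in the Jordan-H\"older series of $B_m(A_n)$ with $n=2,3$, together with the MAGMA-computed multigraded Hilbert series $h_{B_m(A_n)}(t_1,\ldots,t_n)$, and then recover the multiplicities by a finite linear-algebra step. Several cases listed in the theorem ($B_2(A_n)$, $B_3(A_2)$, $B_4(A_2)$, $B_3(A_3)$, and $B_4(A_3)$ as conjectured) are already settled in the references indicated, so one really only needs to carry out the new cases $B_5(A_2)$, $B_6(A_2)$, $B_7(A_2)$, and $B_5(A_3)$.

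First, I would use MAGMA to compute $h_{B_m(A_n)}(t_1,\ldots,t_n)$ to total degree $D$, where $D=11$ for $B_m(A_2)$ with $m\leq 7$ and $D=7$ for $B_m(A_3)$ with $m\leq 5$; the paper records that computations were carried out up to degrees $12$ and $8$ respectively, so sufficient data is available. By Lemma \ref{simplemult} (together with the $B_2$-case from \cite{FS}), the only $\mathcal{F}_{(1^k,0^{n-k})}$ that could possibly appear is $(1,\ldots,1)$, so every Jordan-H\"older factor $\mathcal{F}_\lambda$ that appears has its Hilbert series given by
$$h_{\mathcal{F}_\lambda}(t_1,\ldots,t_n)=\frac{s_\lambda(t_1,\ldots,t_n)}{(1-t_1)\cdots(1-t_n)},$$
where $s_\lambda$ is the Schur polynomial of total degree $|\lambda|\leq 2m-3$.

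Second, denoting by $m_\lambda$ the multiplicity of $\mathcal{F}_\lambda$ in $B_m(A_n)$, one has the identity
$$h_{B_m(A_n)}(t_1,\ldots,t_n)\cdot(1-t_1)\cdots(1-t_n)=\sum_{|\lambda|\leq 2m-3} m_\lambda\,s_\lambda(t_1,\ldots,t_n)$$
of polynomials of total degree at most $2m-3$. Since the Schur polynomials $s_\lambda$ are linearly independent, the unknown nonnegative integers $m_\lambda$ are uniquely determined, and may be read off by successively subtracting Schur polynomials from the highest-degree monomial down. Comparing coefficients up to total degree $2m-3$ (well within the available range) and checking that the resulting $m_\lambda$ are nonnegative integers both determines the decomposition and serves as an internal consistency check on the MAGMA output and the bound from Theorem \ref{2m-3}.

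The main obstacle is purely computational rather than conceptual: because $\dim A_n[d]$ grows as $n^d$, building the quotient $L_m/L_{m+1}$ in sufficiently high degree requires careful management of the polynomial ring and of the ideal of commutator relations. Once the multigraded Hilbert series have been obtained, the expansion in Schur polynomials is routine. In particular, to settle $B_7(A_2)$, which has the richest decomposition listed, one need only verify that the polynomial $h_{B_7(A_2)}(t_1,t_2)(1-t_1)(1-t_2)$ of degree $11$ equals the indicated sum of Schur polynomials $s_\lambda(t_1,t_2)$ with $|\lambda|\leq 11$, and analogously for the remaining three cases.
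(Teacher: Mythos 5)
Your proposal matches the paper's approach exactly: the paper combines the bound of Theorem \ref{2m-3}(2) with MAGMA-produced multigraded Hilbert series (to degrees $12$ and $8$ for $n=2,3$ respectively) and reads off the unique nonnegative-integer expansion into the Hilbert series of the modules $\mathcal{F}_\lambda$. Your remark that Lemma \ref{simplemult} rules out the degenerate shapes $(1^k,0^{n-k})$ for $m\ge 3$, so that $h_{\mathcal{F}_\lambda}=s_\lambda/\prod(1-t_i)$ for every factor in the new cases, is a detail the paper leaves implicit but is correct and necessary.
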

\pagestyle{empty}


\begin{thebibliography}{99}
\bibitem[BCP]{BCP}Wieb Bosma, John Cannon, and Catherine Playoust. The Magma algebra system. I. The user language. J. Symbolic Comput., 24(3-4):235-265, 1997 
\bibitem[DE]{DE} G. Dobrovolska and P. Etingof. An upper bound for the lower
  central series quotients of a free associative algebra. International
  Mathematics Research Notices, Vol. 2008, rnn039.
\bibitem[DKM]{DKM} G. Dobrovolska, J. Kim, X. Ma. On the lower central series
  of an associative algebra. arXiv:0709.1905.
\bibitem[FS]{FS} B. Feigin, B. Shoikhet. On $[A,A]/[A,[A,A]]$ and on a
  $W_{n}$-action on the consecutive commutators of free associative
  algebras. Math. Res. Lett. 14 (2007), no. 5, 781-795.
\bibitem[L]{SL} S. Lang. \emph{Differential and Riemannian Manifolds}. Springer-Verlag, New York, New York (1995).
\bibitem[R]{ANR} A. N. Rudakov. Irreducible representations of infinite-dimensional Lie algebras of Cartan type. Math. USSR Izv. Vol. 8, pgs. 836-866.
  \end{thebibliography}
\end{document}